\providecommand{\U}[1]{\protect\rule{.1in}{.1in}}
\newtheorem{theorem}{Theorem}
\newtheorem{lemma}[theorem]{Lemma}
\newtheorem{remark}[theorem]{Remark}
\newenvironment{proof}[1][Proof]{\noindent\textbf{#1.} }{\ \rule{0.5em}{0.5em}}
\begin{document}

\title{\textbf{Empirical phi-divergence test statistics for testing simple and
composite null hypotheses}}
\author{N. Balakrishnan$^{1}$, N. Mart\'{\i}n$^{2}$ and L. Pardo$^{3}$
\and $^{1}${\small Department of Mathematics and Statistics, McMaster University,
Hamilton, Canada}
\and $^{2}${\small Department of Statistics, Carlos III University of Madrid,
Getafe (Madrid), Spain}
\and $^{3}${\small Department of Statistics and O.R., Complutense University of
Madrid, Madrid, Spain}}
\date{}
\maketitle

\begin{abstract}
The main purpose of this paper is to introduce first a new family of empirical
test statistics for testing a simple null hypothesis when the vector of
parameters of interest are defined through a specific set of unbiased
estimating functions. This family of test statistics is based on a distance
between two probability vectors, with the first probability vector obtained by
maximizing the empirical likelihood on the vector of parameters, and the
second vector defined from the fixed vector of parameters under the simple
null hypothesis. The distance considered for this purpose is the
phi-divergence measure. The asymptotic distribution is then derived for this
family of test statistics. The proposed methodology is illustrated through the
well-known data of Newcomb's measurements on the passage time for light. A
simulation study is carried out to compare its performance with respect to the
empirical likelihood ratio test when confidence intervals are constructed
based on the respective statistics for small sample sizes. The results suggest
that the \textquotedblleft empirical modified likelihood ratio test
statistic\textquotedblright\ provides a competitive alternative to the
empirical likelihood ratio test statistic, and is also more robust than the
empirical likelihood ratio test statistic in the presence of contamination in
the data. Finally, we propose empirical phi-divergence test statistics for
testing a composite null hypothesis and present some asymptotic as well as
simulation results to study the performance of these test procedures.

\end{abstract}

\bigskip\bigskip

\noindent\underline{\textbf{AMS 2001 Subject Classification}}\textbf{: }62E20

\noindent\underline{\textbf{Keywords and phrases}}\textbf{: }Empirical
likelihood, Empirical phi-divergence test statistics, Influence function,
Phi-divergence measures, Power function, Empirical likelihood ratio, Empirical
modified likelihood ratio.

\section{Introduction\label{sec1}}

Empirical likelihood (EL) is a powerful and currently widely used approach for
developing efficient nonparametric statistics. The EL was introduced by Owen
(1988) and since then many papers have appeared on this topic making varied
contributions to different inferential problems. In this approach, the
parameters are usually defined as functionals of the unknown population distribution.

The first purpose of this paper is to introduce a new family of empirical test
statistics as an alternative to the likelihood ratio test statistic proposed
by Qin and Lawless (1994) for testing a simple null hypothesis. As an
extension of the empirical likelihood ratio test of Qin and Lawless (1995), a
new family of empirical test statistics is also considered here for composite
hypothsis. This new family of empirical test statistics is based on divergence measures.

Consider $k$-variate i.i.d. random vectors $\mathbf{X}_{1},...,\mathbf{X}_{n}$
with unknown distribution function $F$, and a $p$-dimensional parameter,
$\boldsymbol{\theta\in\Theta}$, associated with $F$ having finite mean and
non-singular covariance matrix. We assume that all the information about
$\boldsymbol{\theta}$ and $F$ is available in the form of $r\geq p$
functionally independent unbiased estimating functions, through the functions
$g_{j}(\boldsymbol{X},\boldsymbol{\theta})$, $j=1,2,...,r$, such that
$E_{F}\left[  g_{j}(\boldsymbol{X},\boldsymbol{\theta})\right]  =0$. In vector
notation, we have $\boldsymbol{g}(\boldsymbol{X},\boldsymbol{\theta})=\left(
g_{1}(\boldsymbol{X},\boldsymbol{\theta}),...,g_{r}(\boldsymbol{X}%
,\boldsymbol{\theta}\right)  )^{T}$, such that%
\begin{equation}
E_{F}\left[  \boldsymbol{g}(\boldsymbol{X},\boldsymbol{\theta})\right]
=\boldsymbol{0}_{r}. \label{estF}%
\end{equation}
We shall assume that for each realization $\boldsymbol{\boldsymbol{x}}$ of
$\boldsymbol{\boldsymbol{X}}$, $\boldsymbol{g}:$ $\mathbb{R}^{p}%
\mathbb{\rightarrow R}^{r}$ is a vector-valued function and the $r\times p$
matrix
\begin{equation}
\boldsymbol{G}_{\boldsymbol{x}}\left(  \boldsymbol{\theta}\right)
=\frac{\partial\boldsymbol{g}(\boldsymbol{\boldsymbol{x}},\boldsymbol{\theta
})}{\partial\boldsymbol{\theta}^{T}} \label{G}%
\end{equation}
exists. This formulation is as in Qin and Lawless (1994), but a little
different from that of Owen (1988, 1990). The essential difference is that
Owen considered $r=p$ instead of $r\geq p$. For example, by taking into
account that $E_{F}\left[  X\right]  =\theta$, we can adopt steps similar to
the method of moments to obtain estimators through the estimating function
$g_{1}(X,\theta)=X-\theta$ of a univariate distribution. In addition, if we
assume $E_{F}\left[  X^{2}\right]  =2\theta^{2}+1$, the other estimating
function is $g_{2}(X,\theta)=X^{2}-2\theta^{2}-1$, and in this case we have
$r=2>p=k=1$.

Let $\boldsymbol{x}_{1},...,\boldsymbol{x}_{n}$\ be a realization of
$\boldsymbol{X}_{1},...,\boldsymbol{X}_{n}$. The empirical likelihood function
is then given by
\[
\mathcal{L}_{n}(F)=\prod_{i=1}^{n}dF\left(  \boldsymbol{x}_{i}\right)
=\prod_{i=1}^{n}p_{i},
\]
where $p_{i}=dF\left(  \boldsymbol{x}_{i}\right)  =P(\boldsymbol{X}%
=\boldsymbol{x}_{i})$. Only distributions with an atom of probability at each
$\boldsymbol{x}_{i}$ have non-zero likelihood, and without consideration of
estimating functions,\ the empirical likelihood function $\mathcal{L}_{n}(F)$
is seen to be maximized, at $\boldsymbol{X}_{1}=\boldsymbol{x}_{1}%
,...,\boldsymbol{X}_{n}=\boldsymbol{x}_{n}$, by the empirical distribution
function%
\[
F_{n}\left(  \boldsymbol{x}\right)  =\sum\limits_{i=1}^{n}u_{i}%
I(\boldsymbol{X_{i}<x}),
\]
which is associated with the $n$-dimensional discrete uniform distribution%
\[
\boldsymbol{u}=(u_{1},...,u_{n})^{T}=(\tfrac{1}{n},\overset{\overset{n}{\smile
}}{\cdots},\tfrac{1}{n})^{T}.
\]
Now, let%
\[
F_{n,\boldsymbol{\theta}}\left(  \boldsymbol{x}\right)  =\sum\limits_{i=1}%
^{n}p_{i}\left(  \boldsymbol{\theta}\right)  I(\boldsymbol{X_{i}<x})\text{,}%
\]
be an empirical distribution function associated with the probability vector%
\begin{equation}
\boldsymbol{p}(\boldsymbol{\theta})=(p_{1}(\boldsymbol{\theta}),...,p_{n}%
(\boldsymbol{\theta}))^{T},\quad p_{i}\left(  \boldsymbol{\theta}\right)
\geq0,\quad\sum\limits_{i=1}^{n}p_{i}\left(  \boldsymbol{\theta}\right)  =1,
\label{cond}%
\end{equation}
and $\ell_{E,n}(\boldsymbol{\theta})$ be the kernel of the empirical
log-likelihood function, $\sum_{i=1}^{n}\log p_{i}\left(  \boldsymbol{\theta
}\right)  $. If we are interested in maximizing $\ell_{E,n}(\boldsymbol{\theta
})$ subject to the restrictions defined by the estimating functions based on
$F_{n,\boldsymbol{\theta}}\left(  \boldsymbol{x}\right)  $ given by
\begin{equation}
\sum\limits_{i=1}^{n}p_{i}\left(  \boldsymbol{\theta}\right)  \boldsymbol{g}%
(\boldsymbol{X}_{i},\boldsymbol{\theta})=\boldsymbol{0}_{r}, \label{res}%
\end{equation}
we obtain, by applying the Lagrange multipliers method,
\begin{equation}
p_{i}\left(  \boldsymbol{\theta}\right)  =\frac{1}{n}\frac{1}{1+\boldsymbol{t}%
(\boldsymbol{\theta})^{T}\boldsymbol{g}(\boldsymbol{X}_{i},\boldsymbol{\theta
})},\text{ }i=1,...,n, \label{empF2}%
\end{equation}
where $\boldsymbol{t}(\boldsymbol{\theta})$ is an $r$-dimensional vector to be
determined by solving the non-linear system of $r$ equations,%
\begin{equation}
\frac{1}{n}\sum\limits_{i=1}^{n}\frac{1}{1+\boldsymbol{t}(\boldsymbol{\theta
})^{T}\boldsymbol{g}(\boldsymbol{x}_{i},\boldsymbol{\theta})}\boldsymbol{g}%
(\boldsymbol{X}_{i},\boldsymbol{\theta})=\boldsymbol{0}_{r}, \label{ec}%
\end{equation}
subject to (\ref{cond}) and (\ref{empF2}). Thus, the kernel of the empirical
log-likelihood function is%
\begin{equation}
\ell_{E,n}(\boldsymbol{\theta})=-\sum\limits_{i=1}^{n}\log\left[
1+\boldsymbol{t}(\boldsymbol{\theta})^{T}\boldsymbol{g}(\boldsymbol{X}%
_{i},\boldsymbol{\theta})\right]  . \label{ELF}%
\end{equation}

One of the important results\ of Qin and Lawless (1994) is that the empirical
likelihood ratio test statistic for testing%
\begin{equation}
H_{0}\text{: }\boldsymbol{\theta}=\boldsymbol{\theta}_{0}\text{ vs. }%
H_{1}\text{: }\boldsymbol{\theta\neq\theta}_{0} \label{H}%
\end{equation}
is given by
\begin{equation}
L_{E,n}(\widehat{\boldsymbol{\theta}}_{E,n},\boldsymbol{\theta}_{0}%
)=2\ell_{E,n}(\widehat{\boldsymbol{\theta}}_{E,n})-2\ell_{E,n}%
(\boldsymbol{\theta}_{0}), \label{1.1}%
\end{equation}
where $\widehat{\boldsymbol{\theta}}_{E,n}$ is the empirical maximum
likelihood estimator of the parameter $\boldsymbol{\theta}$ obtained by
maximizing $\ell_{E,n}(\boldsymbol{\theta})$ in (\ref{ELF}). In particular, if
$r=p$, it can be seen that $\boldsymbol{t}(\widehat{\boldsymbol{\theta}}%
_{E,n})=\boldsymbol{0}_{r}$, $\boldsymbol{p}(\widehat{\boldsymbol{\theta}%
}_{E,n})=\boldsymbol{u}$ and $\widehat{\boldsymbol{\theta}}_{E,n}$ is the
solution of the system of $r$ equations $\boldsymbol{\bar{g}}_{n}%
(\boldsymbol{\theta})=\boldsymbol{0}_{r}$, where%
\begin{equation}
\boldsymbol{\bar{g}}_{n}(\boldsymbol{\theta})=\frac{1}{n}\sum\limits_{i=1}%
^{n}\boldsymbol{g}(\boldsymbol{X}_{i},\boldsymbol{\theta}), \label{g_bar}%
\end{equation}
subject to (\ref{cond}) and (\ref{empF2}). Furthermore, if $r=p$, $\ell
_{E,n}(\widehat{\boldsymbol{\theta}}_{E,n})=0$ and $L_{E,n}%
(\widehat{\boldsymbol{\theta}}_{E,n},\boldsymbol{\theta}_{0})=-2\ell
_{E,n}(\boldsymbol{\theta}_{0})$. The asymptotic properties of
$\widehat{\boldsymbol{\theta}}_{E,n}$, when $r\geq p$, were studied by Qin and
Lawless (1994). In fact, the asymptotic distribution of $L_{E,n}%
(\widehat{\boldsymbol{\theta}}_{E,n},\boldsymbol{\theta}_{0})$ in (\ref{1.1})
is chi-square with $p$ degrees of freedom.

Here, we propose a new family of test statistics for testing the hypotheses in
(\ref{H}), based on $\phi$-divergence measures, and then derive their
asymptotic distribution. This new family of empirical test statistics is
referred to hereafter as \textquotedblleft empirical $\phi$-divergence test
statistics\textquotedblright. In Section \ref{sec2}, the asymptotic null
distribution of the empirical $\phi$-divergence test statistics is derived.
Then, two power approximations of the empirical $\phi$-divergence test
statistics are presented in Section \ref{sec3}. An illustrative example is
presented in Section \ref{Example}. In Section \ref{Simulation}, a Monte Carlo
simulation study is carried out to compare its performance with respect to the
empirical likelihood ratio test when confidence intervals are constructed
based on the respective statistics for small simple sizes. The results show
that the empirical $\phi$-divergence test statistic is competitive in terms of
power when compared to the empirical likelihood ratio test statistic, and
moreover is more robust than the empirical likelihood ratio test statistic in
the presence of contamination in the data. Next, in Section \ref{secComp}, we
propose empirical phi-divergence test statistics for testing a composite null
hypothesis and present some asymptotic as well as simulation results to
evaluate the performance of these test procedures. Finally, in Section
\ref{LastSec}, we make some concluding remarks.

\section{New family of empirical phi-divergence test statistics\label{sec2a}}

The Kullback-Leibler divergence measure between the probability vectors,
$\boldsymbol{u}$ and $\boldsymbol{p}(\boldsymbol{\theta})$, is given by
\[
\sum\limits_{i=1}^{n}u_{i}\log\frac{u_{i}}{p_{i}(\boldsymbol{\theta})}%
=\sum\limits_{i=1}^{n}\frac{1}{n}\log\frac{\frac{1}{n}}{\frac{1}{n}\frac
{1}{1+\boldsymbol{t}(\boldsymbol{\theta})^{T}\boldsymbol{g}(\boldsymbol{x}%
_{i},\boldsymbol{\theta})}}=\frac{1}{n}\sum\limits_{i=1}^{n}\log\left(
1+\boldsymbol{t}(\boldsymbol{\theta})^{T}\boldsymbol{g}(\boldsymbol{x}%
_{i},\boldsymbol{\theta})\right)  .
\]
In the sequel, we shall denote it by $D_{\mathrm{Kullback}}(F_{n}%
,F_{n,\boldsymbol{\theta}})$, since the above expression is the distance, in
the sense of the\ Kullback-Leibler divergence, between the distributions
functions $F_{n}$ and $F_{n,\boldsymbol{\theta}}$. Using this notation, it is
clear that the empirical likelihood ratio test statistic in (\ref{1.1}) can be
expressed as
\[
L_{E,n}(\widehat{\boldsymbol{\theta}}_{E,n},\boldsymbol{\theta}_{0})=2n\left(
D_{\mathrm{Kullback}}\left(  F_{n},F_{n,\boldsymbol{\theta}_{0}}\right)
-D_{\mathrm{Kullback}}\left(  F_{n},F_{n,\widehat{\boldsymbol{\theta}}_{E,n}%
}\right)  \right)  ,
\]
or equivalently
\[
L_{E,n}\left(  \widehat{\boldsymbol{\theta}}_{E,n},\boldsymbol{\theta}%
_{0}\right)  =\frac{2n}{\phi^{\prime\prime}(1)}\left(  D_{\phi}(F_{n}%
,F_{n,\boldsymbol{\theta}_{0}})-D_{\phi}(F_{n}%
,F_{n,\widehat{\boldsymbol{\theta}}_{E,n}})\right)  ,
\]
where
\begin{equation}
D_{\phi}\left(  F_{n},F_{n,\boldsymbol{\theta}}\right)  =\frac{1}{n}%
\sum\limits_{i=1}^{n}\frac{1}{1+\boldsymbol{t}(\boldsymbol{\theta}%
)^{T}\boldsymbol{g}(\boldsymbol{x}_{i},\boldsymbol{\theta})}\phi\left(
1+\boldsymbol{t}(\boldsymbol{\theta})^{T}\boldsymbol{g}(\boldsymbol{x}%
_{i},\boldsymbol{\theta})\right)  , \label{eqDiv}%
\end{equation}
with $\phi\left(  x\right)  =x\log x-x+1$, $x\in%
\mathbb{R}
$.

Now, we shall denote by $\Phi^{\ast}$ the class of all convex functions
$\phi:\mathbb{R}^{+}\longrightarrow\mathbb{R}$ such that at $x=1$,
$\phi\left(  1\right)  =0$, $\phi^{\prime\prime}\left(  1\right)  >0$, and at
$x=0$, $0\phi\left(  0/0\right)  =0$ and $0\phi\left(  p/0\right)
=p\lim_{u\rightarrow\infty}\frac{\phi\left(  u\right)  }{u}$. Instead of
$\phi\left(  x\right)  =x\log x-x+1$, if we consider a function $\phi$
belonging to $\Phi^{\ast}$, we obtain a new family of test statistics for
testing (\ref{H}) given by
\begin{equation}
T_{n}^{\phi}(\widehat{\boldsymbol{\theta}}_{E,n},\boldsymbol{\theta}%
_{0})=\frac{2n}{\phi^{\prime\prime}(1)}\left(  D_{\phi}(F_{n}%
,F_{n,\boldsymbol{\theta}_{0}})-D_{\phi}(F_{n}%
,F_{n,\widehat{\boldsymbol{\theta}}_{E,n}})\right)  . \label{F1}%
\end{equation}
We will refer to this family of test statistics as empirical $\phi$-divergence
test statistics.

For every $\phi\in\Phi^{\ast}$ that is differentiable at $x=1$, the function
$\psi\left(  x\right)  \equiv\phi(x)-\phi^{\prime}\left(  1\right)  \left(
x-1\right)  $ also belongs to $\Phi^{\ast}$. Then, we have
\[
T_{n}^{\phi}(\widehat{\boldsymbol{\theta}}_{E,n},\boldsymbol{\theta}%
_{0})=T_{n}^{\psi}(\widehat{\boldsymbol{\theta}}_{E,n},\boldsymbol{\theta}%
_{0})
\]
and $\psi$ has the additional property that $\psi^{\prime}\left(  1\right)
=0.$ Since the two divergence measures are equivalent, we can consider the set
$\Phi^{\ast}$ to be equivalent to the set $\Phi=\Phi^{\ast}\cap\left\{
\phi:\phi^{\prime}\left(  1\right)  =0\right\}  $. In what follows, we shall
assume that $\phi\in\Phi$.

The statistics in (\ref{F1}) have been considered recently by Broniatowski and
Keziou (2012) to give some empirical test statistics, generalizing an
important subfamily of test statistics introduced by Baggerly (1998). We
present more details in Section \ref{Simulation} in this regard.

The main purpose of this paper is to present a new family of test statistics
for testing the hypotheses in (\ref{H}) based on the $\phi$-divergence measure
between $F_{n,\boldsymbol{\theta}_{0}}$ and $F_{n,\widehat{\boldsymbol{\theta
}}_{E,n}}$, namely, $D_{\phi}\left(  F_{n,\widehat{\boldsymbol{\theta}}_{E,n}%
},F_{n,\boldsymbol{\theta}_{0}}\right)  $. We shall consider the empirical
family of $\phi$-divergence test statistics given by
\begin{equation}
S_{n}^{\phi}(\widehat{\boldsymbol{\theta}}_{E,n},\boldsymbol{\theta}%
_{0})=\frac{2n}{\phi^{\prime\prime}(1)}D_{\phi}\left(
F_{n,\widehat{\boldsymbol{\theta}}_{E,n}},F_{n,\boldsymbol{\theta}_{0}%
}\right)  =\frac{2}{\phi^{\prime\prime}(1)}\sum\limits_{i=1}^{n}\frac
{1}{1+\boldsymbol{t}(\boldsymbol{\theta}_{0})^{T}\boldsymbol{g}(\boldsymbol{x}%
_{i},\boldsymbol{\theta}_{0})}\phi\left(  \frac{1+\boldsymbol{t}%
(\boldsymbol{\theta}_{0})^{T}\boldsymbol{g}(\boldsymbol{x}_{i}%
,\boldsymbol{\theta}_{0})}{1+\boldsymbol{t}(\widehat{\boldsymbol{\theta}%
}_{E,n})^{T}\boldsymbol{g}(\boldsymbol{x}_{i},\widehat{\boldsymbol{\theta}%
}_{E,n})}\right)  , \label{F2}%
\end{equation}
where $\phi$ is a function satisfying the same conditions as function $\phi$
used to construct $T_{n}^{\phi}(\widehat{\boldsymbol{\theta}}_{E,n}%
,\boldsymbol{\theta}_{0})$. Observe that (\ref{F1}) and (\ref{F2}) are
equivalent only when $r=p$. It is well-known that the family of test
statistics based on $\phi$-divergence has some nice and optimal properties for
different inferential problems, and especially in relation to robustness; see
Pardo (2006) and Basu et al. (2011).

\section{Asymptotic null distribution \label{sec2}}

In this section, we derive the asymptotic distribution of $S_{n}^{\phi
}(\widehat{\boldsymbol{\theta}}_{E,n},\boldsymbol{\theta}_{0})$. For this
purpose, the asymptotic distribution of the maximum empirical likelihood
estimator of the parameter $\boldsymbol{\theta}$, $\widehat{\boldsymbol{\theta
}}_{E,n},$ as well as the asymptotic distribution of $\widehat{\boldsymbol{t}%
}=\boldsymbol{t}(\widehat{\boldsymbol{\theta}}_{E,n})$ are important$.$ These
asymptotic distributions are given in Qin and Lawless (1994), for example.
Under some regularity assumptions (see Lemma 1 and Theorem 1 of Qin and
Lawless (1994)), we have%
\begin{equation}
\sqrt{n}%
\begin{pmatrix}
\widehat{\boldsymbol{\theta}}_{E,n}-\boldsymbol{\theta}_{0}\\
\boldsymbol{t}(\widehat{\boldsymbol{\theta}}_{E,n})
\end{pmatrix}
\overset{\mathcal{L}}{\underset{n\rightarrow\infty}{\longrightarrow}%
}\mathcal{N}\left(  \boldsymbol{0}_{p+r},%
\begin{pmatrix}
\boldsymbol{V}\left(  \boldsymbol{\theta}_{0}\right)  & \boldsymbol{0}%
_{p\times r}\\
\boldsymbol{0}_{r\times p} & \boldsymbol{R}\left(  \boldsymbol{\theta}%
_{0}\right)
\end{pmatrix}
\right)  , \label{dist}%
\end{equation}
where $\overset{\mathcal{L}}{\underset{n\rightarrow\infty}{\longrightarrow}}%
$\ denotes convergence in law and%
\begin{align}
\boldsymbol{V}\left(  \boldsymbol{\theta}_{0}\right)   &  =\left(
\boldsymbol{S}_{12}\left(  \boldsymbol{\theta}_{0}\right)  ^{T}\boldsymbol{S}%
_{11}^{-1}\left(  \boldsymbol{\theta}_{0}\right)  \boldsymbol{S}_{12}\left(
\boldsymbol{\theta}_{0}\right)  \right)  ^{-1},\label{V}\\
\boldsymbol{R}\left(  \boldsymbol{\theta}_{0}\right)   &  =\boldsymbol{S}%
_{11}^{-1}\left(  \boldsymbol{\theta}_{0}\right)  -\boldsymbol{S}_{11}%
^{-1}\left(  \boldsymbol{\theta}_{0}\right)  \boldsymbol{S}_{12}\left(
\boldsymbol{\theta}_{0}\right)  \boldsymbol{V}\left(  \boldsymbol{\theta}%
_{0}\right)  \boldsymbol{S}_{21}\left(  \boldsymbol{\theta}_{0}\right)
\boldsymbol{S}_{11}^{-1}\left(  \boldsymbol{\theta}_{0}\right)  , \label{V2}%
\end{align}
with%
\begin{align}
\boldsymbol{S}_{11}\left(  \boldsymbol{\theta}_{0}\right)   &  =E_{F}\left[
\boldsymbol{g}\left(  \boldsymbol{X},\boldsymbol{\theta}_{0}\right)
\boldsymbol{g}\left(  \boldsymbol{X},\boldsymbol{\theta}_{0}\right)
^{T}\right]  ,\label{S11}\\
\boldsymbol{S}_{12}\left(  \boldsymbol{\theta}_{0}\right)   &  =E_{F}\left[
\boldsymbol{G}_{\boldsymbol{X}}(\boldsymbol{\theta}_{0})\right]
,\quad\boldsymbol{S}_{21}\left(  \boldsymbol{\theta}_{0}\right)
=\boldsymbol{S}_{12}\left(  \boldsymbol{\theta}_{0}\right)  ^{T}. \label{S12}%
\end{align}
This result is derived from:\newline a)
\begin{equation}
\sqrt{n}(\widehat{\boldsymbol{\theta}}_{E,n}-\boldsymbol{\theta}%
_{0})=\boldsymbol{V\left(  \boldsymbol{\theta}_{0}\right)  S}_{12}\left(
\boldsymbol{\theta}_{0}\right)  ^{T}\boldsymbol{S}_{11}^{-1}\left(
\boldsymbol{\theta}_{0}\right)  \sqrt{n}\boldsymbol{\bar{g}}_{n}%
(\boldsymbol{\theta}_{0})+o_{p}(\boldsymbol{1}_{p}), \label{thetaHat}%
\end{equation}
where $\boldsymbol{\bar{g}}_{n}\left(  \boldsymbol{\theta}\right)  $ is given
by (\ref{g_bar}). It is clear from the Central Limit Theorem that
\[
\sqrt{n}\boldsymbol{\bar{g}}_{n}(\boldsymbol{\theta}_{0})\overset{\mathcal{L}%
}{\underset{n\rightarrow\infty}{\longrightarrow}}\mathcal{N}(\boldsymbol{0}%
_{r},\boldsymbol{S}_{11}\left(  \boldsymbol{\theta}_{0}\right)  ),
\]
and so $\sqrt{n}(\widehat{\boldsymbol{\theta}}_{E,n}-\boldsymbol{\theta}%
_{0})\overset{\mathcal{L}}{\underset{n\rightarrow\infty}{\longrightarrow}%
}\mathcal{N}(\boldsymbol{0}_{p},\boldsymbol{V}\left(  \boldsymbol{\theta}%
_{0}\right)  )$;\newline b) $\sqrt{n}\boldsymbol{t}%
(\widehat{\boldsymbol{\theta}}_{E,n})=-\boldsymbol{R}\left(  \boldsymbol{\theta
}_{0}\right)  \sqrt{n}\boldsymbol{\bar{g}}_{n}(\boldsymbol{\theta}_{0}%
)+o_{p}(\boldsymbol{1}_{r})$, where $\boldsymbol{R}\left(  \boldsymbol{\theta
}_{0}\right)  $\ is given by (\ref{V2}) and so $\sqrt{n}\boldsymbol{t}%
(\widehat{\boldsymbol{\theta}}_{E,n})\overset{\mathcal{L}%
}{\underset{n\rightarrow\infty}{\longrightarrow}}\mathcal{N}(\boldsymbol{0}%
_{r},\boldsymbol{R}\left(  \boldsymbol{\theta}_{0}\right)  )$.\newline In
addition, $\widehat{\boldsymbol{\theta}}_{E,n}$ and $\boldsymbol{t}%
(\widehat{\boldsymbol{\theta}}_{E,n})$ are asymptotically uncorrelated.

\begin{lemma}
\label{Lem1}The influence function of the empirical maximum likelihood
estimator of parameter $\boldsymbol{\theta}$, $\widehat{\boldsymbol{\theta}%
}_{E,n}$, is given by%
\begin{equation}
\mathcal{IF}(\boldsymbol{x},\widehat{\boldsymbol{\theta}}_{E,n}%
,F_{n,\boldsymbol{\theta}_{0}})=\boldsymbol{V\left(  \boldsymbol{\theta}%
_{0}\right)  S}_{12}\left(  \boldsymbol{\theta}_{0}\right)  ^{T}%
\boldsymbol{S}_{11}^{-1}\left(  \boldsymbol{\theta}_{0}\right)  \boldsymbol{g}%
(\boldsymbol{x},\boldsymbol{\theta}_{0}). \label{IF}%
\end{equation}

\begin{proof}
It follows from the expression given in (\ref{thetaHat}) and taking into
account the definition of the influence function given in formula (20.1) of
van der Vaart (2000, page 292).
\end{proof}
\end{lemma}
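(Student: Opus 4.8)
The plan is to compute the influence function directly from its definition applied to the asymptotically linear expansion already established in the excerpt.

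First I would recall that an estimator $\widehat{\boldsymbol{\theta}}_{E,n}$ that admits an asymptotically linear representation of the form $\sqrt{n}(\widehat{\boldsymbol{\theta}}_{E,n}-\boldsymbol{\theta}_0)=\frac{1}{\sqrt{n}}\sum_{i=1}^{n}\psi(\boldsymbol{X}_i)+o_p(\boldsymbol{1}_p)$ has, by the definition in formula (20.1) of van der Vaart (2000, p.~292), influence function precisely equal to the summand kernel $\psi$. The expansion (\ref{thetaHat}) is exactly of this type: rewriting $\sqrt{n}\,\boldsymbol{\bar{g}}_n(\boldsymbol{\theta}_0)=\frac{1}{\sqrt{n}}\sum_{i=1}^n\boldsymbol{g}(\boldsymbol{X}_i,\boldsymbol{\theta}_0)$, we get
\[
\sqrt{n}(\widehat{\boldsymbol{\theta}}_{E,n}-\boldsymbol{\theta}_0)=\frac{1}{\sqrt{n}}\sum_{i=1}^{n}\boldsymbol{V}(\boldsymbol{\theta}_0)\boldsymbol{S}_{12}(\boldsymbol{\theta}_0)^{T}\boldsymbol{S}_{11}^{-1}(\boldsymbol{\theta}_0)\,\boldsymbol{g}(\boldsymbol{X}_i,\boldsymbol{\theta}_0)+o_p(\boldsymbol{1}_p),
\]
so the kernel is the constant matrix $\boldsymbol{V}(\boldsymbol{\theta}_0)\boldsymbol{S}_{12}(\boldsymbol{\theta}_0)^{T}\boldsymbol{S}_{11}^{-1}(\boldsymbol{\theta}_0)$ acting on $\boldsymbol{g}(\boldsymbol{x},\boldsymbol{\theta}_0)$, which is exactly the claimed formula (\ref{IF}).

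To make this rigorous rather than a mere pattern-match, I would connect the asymptotic linearity with the functional-derivative definition of the influence function. The influence function of a statistical functional $T$ at $F$ is $\mathcal{IF}(\boldsymbol{x},T,F)=\lim_{\varepsilon\downarrow 0}\frac{T((1-\varepsilon)F+\varepsilon\delta_{\boldsymbol{x}})-T(F)}{\varepsilon}$, the Gateaux derivative in the direction of the point mass $\delta_{\boldsymbol{x}}$. The standard result (van der Vaart, 2000) is that when $T$ is the functional whose empirical version satisfies the linear expansion above with the true distribution being $F_{n,\boldsymbol{\theta}_0}$, its influence function coincides with the kernel $\psi$; one checks this by differentiating the defining estimating equation $E_{(1-\varepsilon)F+\varepsilon\delta_{\boldsymbol{x}}}[\boldsymbol{g}(\boldsymbol{X},\boldsymbol{\theta})]=\boldsymbol{0}_r$ (projected appropriately when $r>p$) with respect to $\varepsilon$ at $\varepsilon=0$ and solving for $\partial\boldsymbol{\theta}/\partial\varepsilon$, which reproduces the matrix $\boldsymbol{V}(\boldsymbol{\theta}_0)\boldsymbol{S}_{12}(\boldsymbol{\theta}_0)^T\boldsymbol{S}_{11}^{-1}(\boldsymbol{\theta}_0)$.

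Since the expansion (\ref{thetaHat}) and the covariance structure (\ref{dist}) are given to us as established facts (from Qin and Lawless, 1994), the proof reduces to invoking the equivalence between the asymptotically linear kernel and the influence function, which is exactly what the authors' one-line proof does. The main (and only) subtle point I would watch is the over-identified case $r>p$: one must confirm that the projection encoded in $\boldsymbol{V}(\boldsymbol{\theta}_0)\boldsymbol{S}_{12}(\boldsymbol{\theta}_0)^T\boldsymbol{S}_{11}^{-1}(\boldsymbol{\theta}_0)$ is indeed the correct influence-function kernel and not merely the asymptotic-variance kernel; but because the variance of the linear term is $\boldsymbol{V}(\boldsymbol{\theta}_0)\boldsymbol{S}_{12}^T\boldsymbol{S}_{11}^{-1}\boldsymbol{S}_{11}\boldsymbol{S}_{11}^{-1}\boldsymbol{S}_{12}\boldsymbol{V}(\boldsymbol{\theta}_0)=\boldsymbol{V}(\boldsymbol{\theta}_0)$, matching (\ref{V}), the linear representation is internally consistent and the identification of the kernel with the influence function goes through without further work.
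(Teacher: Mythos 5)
Your proposal is correct and follows essentially the same route as the paper's own proof: both identify the influence function with the kernel of the asymptotically linear expansion (\ref{thetaHat}), invoking formula (20.1) of van der Vaart (2000). The extra verification you add (the Gateaux-derivative check and the confirmation that the kernel's variance reproduces $\boldsymbol{V}(\boldsymbol{\theta}_{0})$) is sound but fills in detail the paper leaves implicit rather than taking a different approach.
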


\begin{remark}
The empirical maximum likelihood estimator of parameter $\boldsymbol{\theta}$,
$\widehat{\boldsymbol{\theta}}_{E,n}$, is obtained maximizing the kernel of
empirical log-likelihood function given in (\ref{ELF}) or equivalently
minimizing the function $-\frac{1}{n}%
{\textstyle\sum\nolimits_{i=1}^{n}}
\log\left(  np_{i}\left(  \boldsymbol{\theta}\right)  \right)  $, subject to
the restrictions given in (\ref{res}). This expression can be written as the
$\phi$-divergence measure between the probability vectors $\boldsymbol{u}$ and
$\boldsymbol{p}\left(  \boldsymbol{\theta}\right)  $, i.e. $D_{\phi}\left(
F_{n},F_{n,\boldsymbol{\theta}}\right)  =%
{\textstyle\sum\nolimits_{i=1}^{n}}
p_{i}\left(  \boldsymbol{\theta}\right)  \phi\left(  \frac{1}{np_{i}\left(
\boldsymbol{\theta}\right)  }\right)  $ with $\phi\left(  x\right)  =x\log
x-x+1$. Therefore,%
\[
\widehat{\boldsymbol{\theta}}_{E,n}=\arg\min_{\theta}D_{\phi}\left(
F_{n},F_{n,\boldsymbol{\theta}}\right)  ,\text{\quad}\phi(x)=x\log x-x+1,
\]
subject to the restrictions given in (\ref{res}). If\ we consider a general
function $\phi\in\Phi$, defined in Section \ref{sec2a}, instead of considering
$\phi\left(  x\right)  =x\log x-x+1$, then we can define the empirical minimum
$\phi$-divergence estimator by
\[
\widehat{\boldsymbol{\theta}}_{E\phi,n}=\arg\min_{\boldsymbol{\theta}}D_{\phi
}\left(  F_{n},F_{n,\boldsymbol{\theta}}\right)  ,\text{\quad}\phi\in\Phi,
\]
subject to the restrictions given in (\ref{res}). The empirical exponential
tilting estimator (ET), considered for instance in Schennach (2007), is
defined by $\widehat{\boldsymbol{\theta}}_{ET,n}=\arg\min_{\boldsymbol{\theta
}}%
{\textstyle\sum\nolimits_{i=1}^{n}}
p_{i}\left(  \boldsymbol{\theta}\right)  \log\left(  np_{i}\left(
\boldsymbol{\theta}\right)  \right)  $, subject to the restrictions given in
(\ref{res}). The ET is another member of this family of estimators since
$\widehat{\boldsymbol{\theta}}_{ET,n}=\widehat{\boldsymbol{\theta}}_{E\phi,n}$
with $\phi(x)=-\log x+x-1$. The asymptotic properties of
$\widehat{\boldsymbol{\theta}}_{E\phi,n}$ and $\widehat{\boldsymbol{\theta}%
}_{E,n}$ are the same (for more details, see Ragusa (2011), Broniatowski and
Keziou (2012) and Schennach (2007)). The Fisher consistence of
$\widehat{\boldsymbol{\theta}}_{E\phi,n}$ was established in Ragusa (2011).
Therefore, all the asymptotic results obtained for the test statistics
considered in this paper are valid replacing $\widehat{\boldsymbol{\theta}%
}_{E,n}$ by $\widehat{\boldsymbol{\theta}}_{E\phi,n}$. The expression given in
(\ref{IF}), for the influence function of the empirical maximum likelihood
estimator of parameter $\boldsymbol{\theta}$, can be found for the empirical
minimum $\phi$-divergence estimators, $\widehat{\boldsymbol{\theta}}_{E\phi
,n}$, in Proposition 2.3 of Toma (2013). Hence, all the estimators based on
$\phi$-divergence measures, independently of the $\phi$ function, share the
same influence function.
\end{remark}

Let $\left\Vert \cdot\right\Vert $ denote any vector or matrix norm. We shall
assume the following regularity conditions:

\begin{enumerate}
\item[i)] $\boldsymbol{S}_{11}\left(  \boldsymbol{\theta}_{0}\right)  $ in
(\ref{S11}) is positive definite, and for $\boldsymbol{S}_{12}\left(
\boldsymbol{\theta}_{0}\right)  $ in (\ref{S12}), $\mathrm{rank}%
(\boldsymbol{S}_{12}\left(  \boldsymbol{\theta}_{0}\right)  )=p$.

\item[ii)] There exists a neighbourhood of $\boldsymbol{\theta}_{0}$, in which
$\left\Vert \boldsymbol{g}\left(  \boldsymbol{x},\boldsymbol{\theta}\right)
\right\Vert ^{3}$ is bounded by some integrable function.

\item[iii)] There exists a neighbourhood of $\boldsymbol{\theta}_{0}$, in
which $\boldsymbol{G}_{\boldsymbol{x}}(\boldsymbol{\theta})$, given in
(\ref{G}), is continuous and $\left\Vert \boldsymbol{G}_{\boldsymbol{x}%
}(\boldsymbol{\theta})\right\Vert $ is bounded by some integrable function.

\item[iv)] There exists a neighbourhood of $\boldsymbol{\theta}_{0}$ in which
$\frac{\partial\boldsymbol{G}_{\boldsymbol{x}}(\boldsymbol{\theta})}%
{\partial\boldsymbol{\theta}}$ is continuous and $\left\Vert \frac
{\partial\boldsymbol{G}_{\boldsymbol{x}}(\boldsymbol{\theta})}{\partial
\boldsymbol{\theta}}\right\Vert $ is bounded by some integrable function.
\end{enumerate}

The asymptotic distribution of the empirical $\phi$-divergence test
statistics, $S_{n}^{\phi}(\widehat{\boldsymbol{\theta}}_{E,n}%
,\boldsymbol{\theta}_{0})$, is given in the following theorem.

\begin{theorem}
\label{Th1}Under $H_{0}$ in (\ref{H}) and the assumptions i)-iv) above, we
have%
\[
S_{n}^{\phi}(\widehat{\boldsymbol{\theta}}_{E,n},\boldsymbol{\theta}%
_{0})\overset{\mathcal{L}}{\underset{n\rightarrow\infty}{\longrightarrow}}%
\chi_{p}^{2}.
\]

\end{theorem}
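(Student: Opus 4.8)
The plan is to linearize everything around $\boldsymbol{\theta}_{0}$: a second-order Taylor expansion of $\phi$ turns $S_{n}^{\phi}(\widehat{\boldsymbol{\theta}}_{E,n},\boldsymbol{\theta}_{0})$ into a quadratic form in $\sqrt{n}\,\boldsymbol{\bar{g}}_{n}(\boldsymbol{\theta}_{0})$, which is asymptotically Gaussian by the Central Limit Theorem, and the matrix of that quadratic form will turn out to be idempotent of rank $p$. Writing $a_{i}=\boldsymbol{t}(\boldsymbol{\theta}_{0})^{T}\boldsymbol{g}(\boldsymbol{x}_{i},\boldsymbol{\theta}_{0})$ and $\hat{a}_{i}=\boldsymbol{t}(\widehat{\boldsymbol{\theta}}_{E,n})^{T}\boldsymbol{g}(\boldsymbol{x}_{i},\widehat{\boldsymbol{\theta}}_{E,n})$, the argument of $\phi$ in (\ref{F2}) is $r_{i}=(1+a_{i})/(1+\hat{a}_{i})$, with $r_{i}-1=(a_{i}-\hat{a}_{i})/(1+\hat{a}_{i})$. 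Since $\phi\in\Phi$ satisfies $\phi(1)=\phi^{\prime}(1)=0$, I would expand $\phi(r_{i})=\tfrac{1}{2}\phi^{\prime\prime}(1)(r_{i}-1)^{2}+o((r_{i}-1)^{2})$ and substitute into (\ref{F2}), obtaining $S_{n}^{\phi}=\sum_{i=1}^{n}(1+a_{i})^{-1}(r_{i}-1)^{2}+o_{p}(1)$. Because $\boldsymbol{t}(\boldsymbol{\theta}_{0})$ and $\boldsymbol{t}(\widehat{\boldsymbol{\theta}}_{E,n})$ are $O_{p}(n^{-1/2})$ while $\max_{i}\Vert\boldsymbol{g}(\boldsymbol{x}_{i},\boldsymbol{\theta}_{0})\Vert=o_{p}(n^{1/2})$ under ii), one has $\max_{i}|a_{i}|,\max_{i}|\hat{a}_{i}|,\max_{i}|r_{i}-1|=o_{p}(1)$, so the factors $(1+a_{i})^{-1}$ and $(1+\hat{a}_{i})^{-1}$ may be replaced by $1$, reducing the statistic to $S_{n}^{\phi}=\sum_{i=1}^{n}(a_{i}-\hat{a}_{i})^{2}+o_{p}(1)$.

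Next I would turn this sum into a quadratic form. Expanding $\boldsymbol{g}(\boldsymbol{x}_{i},\widehat{\boldsymbol{\theta}}_{E,n})=\boldsymbol{g}(\boldsymbol{x}_{i},\boldsymbol{\theta}_{0})+\boldsymbol{G}_{\boldsymbol{x}_{i}}(\boldsymbol{\theta}_{0})(\widehat{\boldsymbol{\theta}}_{E,n}-\boldsymbol{\theta}_{0})+\cdots$ and using $\boldsymbol{t}(\widehat{\boldsymbol{\theta}}_{E,n})=O_{p}(n^{-1/2})$, the contribution of the $\boldsymbol{G}_{\boldsymbol{x}_{i}}$ term to each $\hat{a}_{i}$ is $O_{p}(n^{-1})$; assumptions iii)--iv) let me check that it is negligible after squaring and summing. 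Hence $a_{i}-\hat{a}_{i}=\boldsymbol{\delta}^{T}\boldsymbol{g}(\boldsymbol{x}_{i},\boldsymbol{\theta}_{0})$ up to a term negligible in the aggregate, where $\boldsymbol{\delta}=\boldsymbol{t}(\boldsymbol{\theta}_{0})-\boldsymbol{t}(\widehat{\boldsymbol{\theta}}_{E,n})$. Since $\tfrac{1}{n}\sum_{i}\boldsymbol{g}(\boldsymbol{x}_{i},\boldsymbol{\theta}_{0})\boldsymbol{g}(\boldsymbol{x}_{i},\boldsymbol{\theta}_{0})^{T}\rightarrow\boldsymbol{S}_{11}(\boldsymbol{\theta}_{0})$ by the law of large numbers, I obtain $S_{n}^{\phi}=(\sqrt{n}\,\boldsymbol{\delta})^{T}\boldsymbol{S}_{11}(\boldsymbol{\theta}_{0})(\sqrt{n}\,\boldsymbol{\delta})+o_{p}(1)$.

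It then remains to identify the limit. Both multipliers are governed by the estimating equation (\ref{ec}): at the fixed value $\boldsymbol{\theta}_{0}$ its linearization gives $\sqrt{n}\,\boldsymbol{t}(\boldsymbol{\theta}_{0})$ with leading matrix $\boldsymbol{S}_{11}^{-1}$ acting on $\sqrt{n}\,\boldsymbol{\bar{g}}_{n}(\boldsymbol{\theta}_{0})$, whereas the joint expansion b) replaces that matrix by $\boldsymbol{R}(\boldsymbol{\theta}_{0})$; estimating $\boldsymbol{\theta}$ alters only the matrix, not the sign, so subtracting yields $\sqrt{n}\,\boldsymbol{\delta}=\boldsymbol{C}\sqrt{n}\,\boldsymbol{\bar{g}}_{n}(\boldsymbol{\theta}_{0})+o_{p}(\boldsymbol{1}_{r})$, where by (\ref{V2}) the coefficient matrix is $\boldsymbol{C}=\boldsymbol{S}_{11}^{-1}-\boldsymbol{R}=\boldsymbol{S}_{11}^{-1}\boldsymbol{S}_{12}\boldsymbol{V}\boldsymbol{S}_{21}\boldsymbol{S}_{11}^{-1}$ (all at $\boldsymbol{\theta}_{0}$, any global sign being immaterial for what follows). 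With $\boldsymbol{Z}=\sqrt{n}\,\boldsymbol{\bar{g}}_{n}(\boldsymbol{\theta}_{0})\overset{\mathcal{L}}{\longrightarrow}\mathcal{N}(\boldsymbol{0}_{r},\boldsymbol{S}_{11})$, Slutsky's theorem gives $S_{n}^{\phi}\overset{\mathcal{L}}{\longrightarrow}\boldsymbol{Z}^{T}\boldsymbol{C}\boldsymbol{S}_{11}\boldsymbol{C}\,\boldsymbol{Z}$. Using the defining identity $\boldsymbol{S}_{21}\boldsymbol{S}_{11}^{-1}\boldsymbol{S}_{12}=\boldsymbol{V}^{-1}$ one checks $\boldsymbol{C}\boldsymbol{S}_{11}\boldsymbol{C}=\boldsymbol{C}$, so the limit equals $\boldsymbol{Z}^{T}\boldsymbol{C}\,\boldsymbol{Z}$, and that $\boldsymbol{C}\boldsymbol{S}_{11}$ is idempotent with $\mathrm{tr}(\boldsymbol{C}\boldsymbol{S}_{11})=\mathrm{tr}(\boldsymbol{V}\boldsymbol{V}^{-1})=p$; by the standard result on quadratic forms in Gaussian vectors, $\boldsymbol{Z}^{T}\boldsymbol{C}\,\boldsymbol{Z}\sim\chi_{p}^{2}$, the rank $p$ being guaranteed by $\mathrm{rank}(\boldsymbol{S}_{12})=p$ in assumption i).

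The \emph{main obstacle} is the uniform control in the first step: establishing $\max_{i}|r_{i}-1|=o_{p}(1)$ and, above all, that the aggregated Taylor remainder $\sum_{i}(1+a_{i})^{-1}o((r_{i}-1)^{2})$ is genuinely $o_{p}(1)$ rather than of the same order as the leading term $\sum_{i}(a_{i}-\hat{a}_{i})^{2}=O_{p}(1)$, together with the bookkeeping in the second step verifying that the $\boldsymbol{G}_{\boldsymbol{x}_{i}}$ cross-terms vanish after summation. These rely on the moment and continuity bounds ii)--iv) and on the empirical-likelihood order bound $\max_{i}\Vert\boldsymbol{g}(\boldsymbol{x}_{i},\boldsymbol{\theta}_{0})\Vert=o_{p}(n^{1/2})$; once the reduction to a quadratic form is secured, the final linear-algebra identification of the $\chi_{p}^{2}$ limit is routine.
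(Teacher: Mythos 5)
Your proposal is correct. Its skeleton is the same as the paper's one-sentence proof --- a second-order Taylor expansion of $\phi$ at $1$ combined with the Qin--Lawless expansions --- but your endgame is genuinely different, and your version supplies the work the paper omits. The paper simply asserts the representation (\ref{TaylorS}), a quadratic form in $\sqrt{n}\boldsymbol{V}^{-1/2}\left(  \boldsymbol{\theta}_{0}\right)  (\widehat{\boldsymbol{\theta}}_{E,n}-\boldsymbol{\theta}_{0})$, and reads off the $\chi_{p}^{2}$ limit from (\ref{dist}); you instead carry the expansion through the multiplier difference $\boldsymbol{\delta}=\boldsymbol{t}(\boldsymbol{\theta}_{0})-\boldsymbol{t}(\widehat{\boldsymbol{\theta}}_{E,n})$, arrive at a quadratic form in $\boldsymbol{Z}=\sqrt{n}\,\boldsymbol{\bar{g}}_{n}(\boldsymbol{\theta}_{0})\overset{\mathcal{L}}{\longrightarrow}\mathcal{N}(\boldsymbol{0}_{r},\boldsymbol{S}_{11}\left(  \boldsymbol{\theta}_{0}\right)  )$ with matrix $\boldsymbol{C}=\boldsymbol{S}_{11}^{-1}-\boldsymbol{R}=\boldsymbol{S}_{11}^{-1}\boldsymbol{S}_{12}\boldsymbol{V}\boldsymbol{S}_{21}\boldsymbol{S}_{11}^{-1}$, and close with the idempotency/trace criterion for Gaussian quadratic forms. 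The two limits are the same object: substituting $\sqrt{n}(\widehat{\boldsymbol{\theta}}_{E,n}-\boldsymbol{\theta}_{0})=\mp\boldsymbol{V}\boldsymbol{S}_{21}\boldsymbol{S}_{11}^{-1}\boldsymbol{Z}+o_{p}(\boldsymbol{1}_{p})$ into the paper's form yields $\boldsymbol{Z}^{T}\boldsymbol{C}\boldsymbol{S}_{11}\boldsymbol{C}\boldsymbol{Z}=\boldsymbol{Z}^{T}\boldsymbol{C}\boldsymbol{Z}$, which is exactly your expression, so your computation is in effect a proof of the unproved step (\ref{TaylorS}). What your route buys is a self-contained argument in the score variable $\boldsymbol{\bar{g}}_{n}$; what the paper's buys is dispensing with any rank or idempotency argument, since a quadratic form in a nondegenerate $p$-variate Gaussian standardized by its own covariance is trivially $\chi_{p}^{2}$. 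Two delicate points you resolved correctly deserve emphasis. First, everything hinges on the expansions of $\boldsymbol{t}(\boldsymbol{\theta}_{0})$ and $\boldsymbol{t}(\widehat{\boldsymbol{\theta}}_{E,n})$ carrying the same sign, with only the matrix changing from $\boldsymbol{S}_{11}^{-1}$ to $\boldsymbol{R}$; this is what linearizing (\ref{ec}) actually gives, even though the minus sign printed in the paper's expansion b) is inconsistent with that linearization (the printed signs in (\ref{thetaHat}) and b) are both flipped relative to the paper's own conventions). Had the relative sign differed, your coefficient matrix would be $\boldsymbol{S}_{11}^{-1}+\boldsymbol{R}$ and the trace would not equal $p$. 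Second, the remainder control you flag as the main obstacle is handled exactly as you indicate; under ii) the maximal inequality is even stronger than you need, $\max_{i}\Vert\boldsymbol{g}(\boldsymbol{x}_{i},\boldsymbol{\theta}_{0})\Vert=o_{p}(n^{1/3})$, which makes the uniform negligibility of $\max_{i}|r_{i}-1|$ and the Cauchy--Schwarz bookkeeping for the $\boldsymbol{G}_{\boldsymbol{x}_{i}}$ cross-terms routine.
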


\begin{proof}
Based on the Taylor expansions of (\ref{F2}), as well as the asymptotic
properties of $\widehat{\boldsymbol{\theta}}_{E,n}$, given in Qin and Lawless
(1994), it holds%
\begin{equation}
S_{n}^{\phi}(\widehat{\boldsymbol{\theta}}_{E,n},\boldsymbol{\theta}%
_{0})=\boldsymbol{U}_{n}(\widehat{\boldsymbol{\theta}}_{E,n}%
,\boldsymbol{\theta}_{0})^{T}\boldsymbol{U}_{n}(\widehat{\boldsymbol{\theta}%
}_{E,n},\boldsymbol{\theta}_{0})+o_{p}(1), \label{TaylorS}%
\end{equation}
with $\boldsymbol{U}_{n}(\widehat{\boldsymbol{\theta}}_{E,n}%
,\boldsymbol{\theta}_{0})=\sqrt{n}\boldsymbol{V}^{-1/2}\left(
\boldsymbol{\theta}_{0}\right)  (\widehat{\boldsymbol{\theta}}_{E,n}%
-\boldsymbol{\theta}_{0})\overset{\mathcal{L}}{\underset{n\rightarrow
\infty}{\longrightarrow}}\mathcal{N}(\boldsymbol{0}_{p},\boldsymbol{I}_{p})$,
according to (\ref{dist}).
\end{proof}

\begin{remark}
There are some measures of divergence which can not be expressed as a $\phi
$-divergence measure such as the divergence measures of Bhattacharya (1943),
R\'{e}nyi (1961), and Sharma and Mittal (1977). However, such measures can be
written in the form%
\[
D_{\phi}^{h}\left(  F_{n,\theta},F_{n,\boldsymbol{\theta}_{0}}\right)
=h\left(  D_{\phi}\left(  F_{n,\theta},F_{n,\boldsymbol{\theta}_{0}}\right)
\right)  ,
\]
where $h$ is a differentiable increasing function mapping from $\left[
0,\infty\right)  $ onto $\left[  0,\infty\right)  $, with $h(0)=0$,
$h^{\prime}(0)>0$, and $\phi\in\Phi$. In Table \ref{t1}, these divergence
measures are presented, along with the corresponding expressions of $h$ and
$\phi$.\newline%
\begin{table}[htbp] \tabcolsep0.8pt  \centering
$%
\begin{tabular}
[c]{ccccc}\hline
Divergence & \hspace*{0.5cm} & $h\left(  x\right)  $ & \hspace*{0.5cm} &
$\phi\left(  x\right)  $\\\hline
\multicolumn{1}{l}{R\'{e}nyi} &  & \multicolumn{1}{l}{$\frac{1}{a\left(
a-1\right)  }\log\left(  a\left(  a-1\right)  x+1\right)  ,\quad a\neq0,1$} &
& \multicolumn{1}{l}{$\frac{x^{a}-a\left(  x-1\right)  -1}{a\left(
a-1\right)  },\quad a\neq0,1$}\\
\multicolumn{1}{l}{Sharma-Mittal} &  & \multicolumn{1}{l}{$\frac{1}%
{b-1}\left\{  [1+a\left(  a-1\right)  x]^{\frac{b-1}{a-1}}-1\right\}  ,\quad
b,a\neq1$} &  & \multicolumn{1}{l}{$\frac{x^{a}-a\left(  x-1\right)
-1}{a\left(  a-1\right)  },\quad a\neq0,1$}\\
\multicolumn{1}{l}{Battacharya} &  & \multicolumn{1}{l}{$-\log\left(
-x+1\right)  $} &  & \multicolumn{1}{l}{$-x^{1/2}+\frac{1}{2}\left(
x+1\right)  $}\\\hline
\end{tabular}
\ \ \ \ \ \ \ \ \ \ \ \ \ \ \ $%
\caption{Some specific $(h,\phi)$-divergence measures.\label{t1}}%
\end{table}%
\newline In the case of R\'{e}nyi's divergence, we have
\begin{equation}
D_{\text{\textrm{R\'{e}nyi}}}^{a}\left(  F_{n,\theta},F_{n,\boldsymbol{\theta
}_{0}}\right)  =\frac{1}{a\left(  a-1\right)  }\log\sum\limits_{i=1}^{n}%
\frac{\left(  1+\boldsymbol{t}(\boldsymbol{\theta}_{0})^{T}\boldsymbol{g}%
(\boldsymbol{x}_{i},\boldsymbol{\theta}_{0})\right)  ^{a-1}}{n\left(
1+\boldsymbol{t}(\boldsymbol{\theta})^{T}\boldsymbol{g}(\boldsymbol{x}%
_{i},\boldsymbol{\theta})\right)  ^{a}}\text{, if }a\neq0,1, \label{renyi}%
\end{equation}
and its \textquotedblleft limit\textquotedblright\ cases corresponding to
$a=0$ and $a=1$ as
\begin{align*}
D_{\text{\textrm{R\'{e}nyi}}}^{0}\left(  F_{n,\theta},F_{n,\boldsymbol{\theta
}_{0}}\right)   &  =\lim_{a\rightarrow0}D_{\text{\textrm{R\'{e}nyi}}}\left(
F_{n,\theta},F_{n,\boldsymbol{\theta}_{0}}\right)  =D_{\mathrm{Kullback}%
}\left(  F_{n,\boldsymbol{\theta}_{0}},F_{n,\theta}\right) \\
&  =\sum\limits_{i=1}^{n}\frac{1}{n(1+\boldsymbol{t}(\boldsymbol{\theta}%
)^{T}\boldsymbol{g}(\boldsymbol{x}_{i},\boldsymbol{\theta}))}\log\left(
\frac{1+\boldsymbol{t}(\boldsymbol{\theta}_{0})^{T}\boldsymbol{g}%
(\boldsymbol{x}_{i},\boldsymbol{\theta}_{0})}{1+\boldsymbol{t}%
(\boldsymbol{\theta})^{T}\boldsymbol{g}(\boldsymbol{x}_{i},\boldsymbol{\theta
})}\right)
\end{align*}
and
\begin{align*}
D_{\text{\textrm{R\'{e}nyi}}}^{1}\left(  F_{n,\theta},F_{n,\boldsymbol{\theta
}_{0}}\right)   &  =\lim_{a\rightarrow1}D_{\text{\textrm{R\'{e}nyi}}}\left(
F_{n,\theta},F_{n,\boldsymbol{\theta}_{0}}\right)  =D_{\mathrm{Kullback}%
}\left(  F_{n,\theta},F_{n,\boldsymbol{\theta}_{0}}\right) \\
&  =\sum\limits_{i=1}^{n}\frac{1}{n(1+\boldsymbol{t}(\boldsymbol{\theta}%
_{0})^{T}\boldsymbol{g}(\boldsymbol{x}_{i},\boldsymbol{\theta}_{0}))}%
\log\left(  \frac{1+\boldsymbol{t}(\boldsymbol{\theta})^{T}\boldsymbol{g}%
(\boldsymbol{x}_{i},\boldsymbol{\theta})}{1+\boldsymbol{t}(\boldsymbol{\theta
}_{0})^{T}\boldsymbol{g}(\boldsymbol{x}_{i},\boldsymbol{\theta}_{0})}\right)
,
\end{align*}
respectively.

The $(h,\phi)$-divergence measures were introduced in Men\'{e}ndez et al.
(1995) and some associated asymptotic results were established in Men\'{e}ndez
et al. (1997).
\end{remark}

\begin{theorem}
\label{th3}Under the assumptions of Theorem \ref{Th1}, the asymptotic null
distribution of the family of empirical test statistics
\[
S_{n}^{\phi,h}(\widehat{\boldsymbol{\theta}}_{E,n},\boldsymbol{\theta}%
_{0})=\frac{2n}{\phi^{\prime\prime}(1)h^{\prime}(0)}h\left(  D_{\phi
}(F_{n,\widehat{\boldsymbol{\theta}}_{E,n}},F_{n,\boldsymbol{\theta}_{0}%
})\right)
\]
is chi-squared with $p$ degrees of freedom.
\end{theorem}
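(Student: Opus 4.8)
The plan is to reduce Theorem \ref{th3} to Theorem \ref{Th1} by a first-order Taylor expansion of $h$ about the origin. Write $D_{n}=D_{\phi}(F_{n,\widehat{\boldsymbol{\theta}}_{E,n}},F_{n,\boldsymbol{\theta}_{0}})$ for brevity. The starting observation is that, under $H_{0}$, $D_{n}$ tends to zero in probability: Theorem \ref{Th1} gives $\frac{2n}{\phi^{\prime\prime}(1)}D_{n}\overset{\mathcal{L}}{\underset{n\rightarrow\infty}{\longrightarrow}}\chi_{p}^{2}$, so that $nD_{n}=O_{p}(1)$ and hence $D_{n}=O_{p}(n^{-1})=o_{p}(1)$. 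This is precisely the regime in which only the behaviour of $h$ near $0$ is relevant.

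First I would use the hypotheses on $h$, namely that it is differentiable and increasing on $[0,\infty)$ with $h(0)=0$ and $h^{\prime}(0)>0$. Expanding $h$ to first order at the origin gives $h(D_{n})=h^{\prime}(0)D_{n}+o(D_{n})$, where the remainder is the Taylor remainder evaluated at the random argument $D_{n}$. Substituting this into the definition of $S_{n}^{\phi,h}(\widehat{\boldsymbol{\theta}}_{E,n},\boldsymbol{\theta}_{0})$ and recalling from (\ref{F2}) that $S_{n}^{\phi}(\widehat{\boldsymbol{\theta}}_{E,n},\boldsymbol{\theta}_{0})=\frac{2n}{\phi^{\prime\prime}(1)}D_{n}$, the leading term reproduces $S_{n}^{\phi}$ exactly, leaving $S_{n}^{\phi,h}(\widehat{\boldsymbol{\theta}}_{E,n},\boldsymbol{\theta}_{0})=S_{n}^{\phi}(\widehat{\boldsymbol{\theta}}_{E,n},\boldsymbol{\theta}_{0})+R_{n}$, with scaled remainder $R_{n}=\frac{2n}{\phi^{\prime\prime}(1)h^{\prime}(0)}o(D_{n})$.

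The one step requiring care is the control of $R_{n}$, which is essentially the delta method applied to $h$ at the boundary point $0$. Since $D_{n}=O_{p}(n^{-1})$ and, because $D_{n}\rightarrow0$ in probability, with probability tending to one the argument lies in the neighbourhood of $0$ where the first-order expansion is valid, the Taylor remainder satisfies $o(D_{n})=o_{p}(n^{-1})$; multiplying by the factor $n$ then yields $R_{n}=o_{p}(1)$. Consequently $S_{n}^{\phi,h}(\widehat{\boldsymbol{\theta}}_{E,n},\boldsymbol{\theta}_{0})=S_{n}^{\phi}(\widehat{\boldsymbol{\theta}}_{E,n},\boldsymbol{\theta}_{0})+o_{p}(1)$, and an appeal to Slutsky's theorem together with Theorem \ref{Th1} delivers the claimed $\chi_{p}^{2}$ limit. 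The only genuine subtlety, as noted, is the passage from the deterministic statement $o(t)$ as $t\rightarrow0$ to the stochastic order $o_{p}(n^{-1})$ at the random argument $D_{n}$, which is legitimized by the consistency $D_{n}\rightarrow 0$ in probability established in the first step.
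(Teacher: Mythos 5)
Your proof is correct and follows essentially the same route the paper takes: the paper leaves Theorem \ref{th3} without a displayed proof, but Remark \ref{Remark} makes clear that the intended argument is exactly this first-order expansion $h(x)=h^{\prime}(0)x+o(x)$ at the origin, reducing $S_{n}^{\phi,h}$ to $S_{n}^{\phi}+o_{p}(1)$ and then invoking the Taylor expansion (\ref{TaylorS}) and Theorem \ref{Th1}. Your careful handling of the stochastic remainder, using $nD_{n}=O_{p}(1)$ so that $n\,o(D_{n})=(nD_{n})\cdot o_{p}(1)=o_{p}(1)$, is precisely the step the paper glosses over.
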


\begin{remark}
\label{Remark}Note that we can also consider the family of empirical
$(h,\phi)$-divergence test statistics given by%
\[
T_{n}^{\phi,h}(\widehat{\boldsymbol{\theta}}_{E,n},\boldsymbol{\theta}%
_{0})=\frac{2n}{\phi^{\prime\prime}(1)h^{\prime}(0)}\left(  h\left(  D_{\phi
}(F_{n},F_{n,\boldsymbol{\theta}_{0}})\right)  -h\left(  D_{\phi}%
(F_{n},F_{_{\widehat{\boldsymbol{\theta}}_{E,n}}})\right)  \right)  ,
\]
and their asymptotic distribution is chi-squared with $p$ degrees of freedom
as well. Test-statistics $T_{n}^{\phi,h}(\widehat{\boldsymbol{\theta}}%
_{E,n},\boldsymbol{\theta}_{0})$ and $S_{n}^{\phi,h}%
(\widehat{\boldsymbol{\theta}}_{E,n},\boldsymbol{\theta}_{0})$, include as
particular cases $T_{n}^{\phi}(\widehat{\boldsymbol{\theta}}_{E,n}%
,\boldsymbol{\theta}_{0})$ and $S_{n}^{\phi}(\widehat{\boldsymbol{\theta}%
}_{E,n},\boldsymbol{\theta}_{0})$, taking $h(x)=x$. In the same way done for
$S_{n}^{\phi}(\widehat{\boldsymbol{\theta}}_{E,n},\boldsymbol{\theta}_{0})$ in
(\ref{TaylorS}), it may be concluded\ that all of them have a Taylor expansion
of second order which does not depend on $\phi$ or $h$,%
\[
T_{n}^{\phi,h}(\widehat{\boldsymbol{\theta}}_{E,n},\boldsymbol{\theta}%
_{0})=\boldsymbol{U}_{n}(\widehat{\boldsymbol{\theta}}_{E,n}%
,\boldsymbol{\theta}_{0})^{T}\boldsymbol{U}_{n}(\widehat{\boldsymbol{\theta}%
}_{E,n},\boldsymbol{\theta}_{0})+o_{p}(1),\quad\text{and}\quad S_{n}^{\phi
,h}(\widehat{\boldsymbol{\theta}}_{E,n},\boldsymbol{\theta}_{0}%
)=\boldsymbol{U}_{n}(\widehat{\boldsymbol{\theta}}_{E,n},\boldsymbol{\theta
}_{0})^{T}\boldsymbol{U}_{n}(\widehat{\boldsymbol{\theta}}_{E,n}%
,\boldsymbol{\theta}_{0})+o_{p}(1),
\]
with $\boldsymbol{U}_{n}(\widehat{\boldsymbol{\theta}}_{E,n}%
,\boldsymbol{\theta}_{0})=\sqrt{n}\boldsymbol{V}^{-1/2}\left(
\boldsymbol{\theta}_{0}\right)  (\widehat{\boldsymbol{\theta}}_{E,n}%
-\boldsymbol{\theta}_{0})\overset{\mathcal{L}}{\underset{n\rightarrow
\infty}{\longrightarrow}}\mathcal{N}(\boldsymbol{0}_{r},\boldsymbol{I}_{p})$.
With regard to their influence function, a second order influence function
have to be considered since the first order one vanishes (see van der Vaart
(2000, Section 20.1.1)), and its expression is given by%
\begin{align}
&  \mathcal{IF}_{2}\left(  \boldsymbol{x},T_{n}^{\phi,h}%
(\widehat{\boldsymbol{\theta}}_{E,n},\boldsymbol{\theta}_{0}%
),F_{n,\boldsymbol{\theta}_{0}}\right)  =\mathcal{IF}_{2}\left(
\boldsymbol{x},S_{n}^{\phi,h}(\widehat{\boldsymbol{\theta}}_{E,n}%
,\boldsymbol{\theta}_{0}),F_{n,\boldsymbol{\theta}_{0}}\right)  =\mathcal{IF}%
_{2}\left(  \boldsymbol{x},\left\Vert \boldsymbol{U}_{n}%
(\widehat{\boldsymbol{\theta}}_{E,n},\boldsymbol{\theta}_{0})\right\Vert
^{2},F_{n,\boldsymbol{\theta}_{0}}\right)  \label{IF2}\\
&  =2\left\Vert \boldsymbol{V}^{-1/2}\left(  \boldsymbol{\theta}_{0}\right)
\mathcal{IF}(\boldsymbol{x},\widehat{\boldsymbol{\theta}}_{E,n}%
,F_{n,\boldsymbol{\theta}_{0}})\right\Vert ^{2}=2\boldsymbol{g}(\boldsymbol{x}%
,\boldsymbol{\theta}_{0})^{T}\boldsymbol{S}_{11}^{-1}\left(
\boldsymbol{\theta}_{0}\right)  \boldsymbol{S}_{12}\left(  \boldsymbol{\theta
}_{0}\right)  \boldsymbol{V\left(  \boldsymbol{\theta}_{0}\right)  S}%
_{12}\left(  \boldsymbol{\theta}_{0}\right)  ^{T}\boldsymbol{S}_{11}%
^{-1}\left(  \boldsymbol{\theta}_{0}\right)  \boldsymbol{g}(\boldsymbol{x}%
,\boldsymbol{\theta}_{0}).\nonumber
\end{align}
The first and second equality come from the previous second order Taylor
expansion, the third one from Heritier and Ronchetti (1994) and the last one
from Lemma \ref{Lem1}. It suggests that under the simple null hypothesis, all
members of both family of test-statistics have the same infinitesimal
robustness. This is not a strange result, since in Lemma 1 of Toma (2009) a
similar conclusion has been reached based on power divergence measures for
parametric models.
\end{remark}

\section{Two approximations of power\label{sec3}}

Based on the null distribution presented in Theorem \ref{Th1}, we reject the
null hypothesis in (\ref{H}) in favour of the alternative hypothesis, if
$S_{n}^{\phi}(\widehat{\boldsymbol{\theta}}_{E,n},\boldsymbol{\theta}%
_{0})>\chi_{p,\alpha}^{2}$, where $\chi_{p,\alpha}^{2}$ is the $\left(
1-\alpha\right)  $-th quantile of the chi-squared distribution with $p$
degrees of freedom.

In most cases, the power function of this test procedure can not be derived
explicitly. In the following theorem, we present an asymptotic result, which
provides an approximation of the power of the test.

\begin{theorem}
\label{th5}Let $\boldsymbol{\theta}^{\ast}$ be the true parameter value with
$\boldsymbol{\theta}^{\ast}\neq\boldsymbol{\theta}_{0}$. Under the assumptions
of Theorem \ref{Th1} and $E_{F}\left[  \sup_{\boldsymbol{\theta\in\Theta}%
}\left\Vert \boldsymbol{g}\left(  \boldsymbol{X},\boldsymbol{\theta}\right)
\right\Vert ^{2}\right]  <\infty$, we have%
\[
\sqrt{n}\left(  D_{\phi}(F_{n,\widehat{\boldsymbol{\theta}}_{E,n}%
},F_{n,\boldsymbol{\theta}_{0}})-D_{\phi}(F_{n,\boldsymbol{\theta}^{\ast}%
},F_{n,\boldsymbol{\theta}_{0}})\right)  \overset{\mathcal{L}%
}{\underset{n\rightarrow\infty}{\longrightarrow}}\mathcal{N}\left(
0,\sigma_{\phi}^{2}\left(  \boldsymbol{\theta}^{\ast}%
,\boldsymbol{\boldsymbol{\theta}_{0}}\right)  \right)  ,
\]
where
\[
\sigma_{\phi}^{2}\left(  \boldsymbol{\theta}^{\ast}%
,\boldsymbol{\boldsymbol{\theta}_{0}}\right)  =\boldsymbol{\tau}_{\phi}\left(
\boldsymbol{\theta}^{\ast},\boldsymbol{\boldsymbol{\theta}_{0}}\right)
^{T}\boldsymbol{V\left(  \boldsymbol{\theta}_{0}\right)  \tau}_{\phi}\left(
\boldsymbol{\theta}^{\ast},\boldsymbol{\boldsymbol{\theta}_{0}}\right)
\]
with
\begin{equation}
\boldsymbol{\tau}_{\phi}(\boldsymbol{\theta}^{\ast}%
,\boldsymbol{\boldsymbol{\theta}_{0}})=\left(  \tau_{1}^{\phi}\left(
\boldsymbol{\theta}^{\ast},\boldsymbol{\boldsymbol{\theta}_{0}}\right)
,...,\tau_{p}^{\phi}\left(  \boldsymbol{\theta}^{\ast}%
,\boldsymbol{\boldsymbol{\theta}_{0}}\right)  \right)  ^{T}=\left.
\frac{\partial}{\partial\boldsymbol{\theta}}D_{\phi}(F_{n,\boldsymbol{\theta}%
},F_{n,\boldsymbol{\theta}_{0}})\right\vert _{\boldsymbol{\theta
}=\boldsymbol{\theta}^{\ast}}, \label{tao}%
\end{equation}
and the matrix $\boldsymbol{V\left(  \boldsymbol{\theta}_{0}\right)  }$\ as
defined in (\ref{V}).
\end{theorem}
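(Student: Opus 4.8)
The plan is to read the statement as a first-order delta-method (linearization) result for the smooth map $\boldsymbol{\theta}\mapsto D_{\phi}(F_{n,\boldsymbol{\theta}},F_{n,\boldsymbol{\theta}_{0}})$ evaluated at the random argument $\widehat{\boldsymbol{\theta}}_{E,n}$, combined with the asymptotic normality of $\widehat{\boldsymbol{\theta}}_{E,n}$ already recorded in (\ref{dist})--(\ref{thetaHat}). Abbreviate $\rho_{n}(\boldsymbol{\theta})=D_{\phi}(F_{n,\boldsymbol{\theta}},F_{n,\boldsymbol{\theta}_{0}})$, where $D_{\phi}$ is given by (\ref{eqDiv}) through the profiled multiplier $\boldsymbol{t}(\boldsymbol{\theta})$ solving (\ref{ec}); the quantity to analyze is exactly $\sqrt{n}\bigl(\rho_{n}(\widehat{\boldsymbol{\theta}}_{E,n})-\rho_{n}(\boldsymbol{\theta}^{\ast})\bigr)$.

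First I would record two inputs. Since $\boldsymbol{\theta}^{\ast}$ is the true value, the argument behind (\ref{thetaHat}) applied with $\boldsymbol{\theta}^{\ast}$ in place of $\boldsymbol{\theta}_{0}$ gives both the root-$n$ rate $\widehat{\boldsymbol{\theta}}_{E,n}-\boldsymbol{\theta}^{\ast}=O_{p}(n^{-1/2})$ and the Gaussian limit $\sqrt{n}(\widehat{\boldsymbol{\theta}}_{E,n}-\boldsymbol{\theta}^{\ast})\overset{\mathcal{L}}{\longrightarrow}\mathcal{N}(\boldsymbol{0}_{p},\boldsymbol{V})$, with $\boldsymbol{V}$ the matrix of (\ref{V}) at the true parameter. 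Next, under the regularity conditions ii)--iv) together with the extra moment bound $E_{F}[\sup_{\boldsymbol{\theta}}\Vert\boldsymbol{g}\Vert^{2}]<\infty$, a uniform law of large numbers shows that $\rho_{n}$, its gradient $\boldsymbol{\tau}_{\phi}(\cdot,\boldsymbol{\theta}_{0})$ defined in (\ref{tao}), and its Hessian converge uniformly on a neighbourhood of $\boldsymbol{\theta}^{\ast}$ to deterministic limits; in particular the Hessian $\partial^{2}\rho_{n}/\partial\boldsymbol{\theta}\partial\boldsymbol{\theta}^{T}$ is $O_{p}(1)$ there and $\boldsymbol{\tau}_{\phi}(\boldsymbol{\theta}^{\ast},\boldsymbol{\theta}_{0})$ converges in probability to a nonrandom vector.

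Then I would expand $\rho_{n}$ about $\boldsymbol{\theta}^{\ast}$ by a second-order mean-value formula,
\[
\rho_{n}(\widehat{\boldsymbol{\theta}}_{E,n})-\rho_{n}(\boldsymbol{\theta}^{\ast})=\boldsymbol{\tau}_{\phi}(\boldsymbol{\theta}^{\ast},\boldsymbol{\theta}_{0})^{T}(\widehat{\boldsymbol{\theta}}_{E,n}-\boldsymbol{\theta}^{\ast})+\tfrac{1}{2}(\widehat{\boldsymbol{\theta}}_{E,n}-\boldsymbol{\theta}^{\ast})^{T}\boldsymbol{H}_{n}(\widetilde{\boldsymbol{\theta}})(\widehat{\boldsymbol{\theta}}_{E,n}-\boldsymbol{\theta}^{\ast}),
\]
for some $\widetilde{\boldsymbol{\theta}}$ on the segment joining $\widehat{\boldsymbol{\theta}}_{E,n}$ and $\boldsymbol{\theta}^{\ast}$. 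By the root-$n$ rate the quadratic term is $O_{p}(\Vert\widehat{\boldsymbol{\theta}}_{E,n}-\boldsymbol{\theta}^{\ast}\Vert^{2})=O_{p}(n^{-1})$, so after multiplication by $\sqrt{n}$ it is $o_{p}(1)$, giving
\[
\sqrt{n}\bigl(\rho_{n}(\widehat{\boldsymbol{\theta}}_{E,n})-\rho_{n}(\boldsymbol{\theta}^{\ast})\bigr)=\boldsymbol{\tau}_{\phi}(\boldsymbol{\theta}^{\ast},\boldsymbol{\theta}_{0})^{T}\sqrt{n}(\widehat{\boldsymbol{\theta}}_{E,n}-\boldsymbol{\theta}^{\ast})+o_{p}(1).
\]
Because $\boldsymbol{\tau}_{\phi}$ stabilizes to its deterministic limit while $\sqrt{n}(\widehat{\boldsymbol{\theta}}_{E,n}-\boldsymbol{\theta}^{\ast})$ is asymptotically $\mathcal{N}(\boldsymbol{0}_{p},\boldsymbol{V})$, Slutsky's theorem and the fact that a fixed linear image of a Gaussian vector is Gaussian yield $\mathcal{N}(0,\boldsymbol{\tau}_{\phi}^{T}\boldsymbol{V}\boldsymbol{\tau}_{\phi})=\mathcal{N}(0,\sigma_{\phi}^{2}(\boldsymbol{\theta}^{\ast},\boldsymbol{\theta}_{0}))$, as claimed.

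The main obstacle is not the linear algebra but the remainder control: one must verify that the empirical divergence $\rho_{n}$ is twice continuously differentiable with second derivatives that are stochastically bounded uniformly near $\boldsymbol{\theta}^{\ast}$, and that the random gradient $\boldsymbol{\tau}_{\phi}$ converges to a nonrandom limit so that Slutsky applies. Both steps require differentiating through the implicitly defined multiplier $\boldsymbol{t}(\boldsymbol{\theta})$ in (\ref{empF2})--(\ref{ec}) and rely on the uniform integrability furnished by $E_{F}[\sup_{\boldsymbol{\theta}}\Vert\boldsymbol{g}\Vert^{2}]<\infty$; this is precisely why that moment hypothesis, absent from Theorem \ref{Th1}, is added here.
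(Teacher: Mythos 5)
Your proposal is correct and takes essentially the same route as the paper's proof: a Taylor (delta-method) expansion of $D_{\phi}(F_{n,\boldsymbol{\theta}},F_{n,\boldsymbol{\theta}_{0}})$ about $\boldsymbol{\theta}^{\ast}$, combined with the asymptotic normality of $\sqrt{n}(\widehat{\boldsymbol{\theta}}_{E,n}-\boldsymbol{\theta}^{\ast})$ and Slutsky's theorem. The only difference is one of bookkeeping: the paper writes a first-order expansion with an $o(\Vert\widehat{\boldsymbol{\theta}}_{E,n}-\boldsymbol{\theta}^{\ast}\Vert)$ remainder, whereas you control the remainder through a second-order mean-value form and spell out the uniform-law-of-large-numbers and gradient-stabilization steps that the paper leaves implicit.
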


\begin{proof}
A first-order Taylor expansion gives%
\[
D_{\phi}(F_{n,\widehat{\boldsymbol{\theta}}_{E,n}},F_{n,\boldsymbol{\theta
}_{0}})=D_{\phi}(F_{n,\boldsymbol{\theta}^{\ast}},F_{n,\boldsymbol{\theta}%
_{0}})+\boldsymbol{\tau}_{\phi}\left(  \boldsymbol{\theta}^{\ast
},\boldsymbol{\boldsymbol{\theta}_{0}}\right)  ^{T}%
(\widehat{\boldsymbol{\theta}}_{E,n}-\boldsymbol{\theta}^{\ast}%
)+o(||\widehat{\boldsymbol{\theta}}_{E,n}-\boldsymbol{\theta}^{\ast}||).
\]
But,
\[
\sqrt{n}(\widehat{\boldsymbol{\theta}}_{E,n}-\boldsymbol{\theta}^{\ast
})\overset{\mathcal{L}}{\underset{n\rightarrow\infty}{\longrightarrow}%
}\mathcal{N}(\boldsymbol{0}_{p},\boldsymbol{V}(\boldsymbol{\boldsymbol{\theta
}_{0}})),
\]
and so $\sqrt{n}\,o(||\widehat{\boldsymbol{\theta}}_{E,n}-\boldsymbol{\theta
}^{\ast}||)=o_{p}(\boldsymbol{1}_{r})$. Thus, the random variables
\[
\sqrt{n}\left(  D_{\phi}(F_{n,\widehat{\boldsymbol{\theta}}_{E,n}%
},F_{n,\boldsymbol{\theta}_{0}})-D_{\phi}(F_{n,\boldsymbol{\theta}^{\ast}%
},F_{n,\boldsymbol{\theta}_{0}})\right)  \text{ and }\boldsymbol{\tau}_{\phi
}(\boldsymbol{\theta}^{\ast},\boldsymbol{\boldsymbol{\theta}_{0}})\sqrt
{n}(\widehat{\boldsymbol{\theta}}_{E,n}-\boldsymbol{\theta}^{\ast})
\]
have the same asymptotic distribution, and hence the desired result.
\end{proof}

\begin{remark}
\label{RemPostth5}It is straightforward to extend Theorem \ref{th5} for
$(h,\phi)$-divergence measures $D_{\phi}^{h}\left(  F_{n,\theta}%
,F_{n,\boldsymbol{\theta}_{0}}\right)  =$\linebreak$h\left(  D_{\phi}\left(
F_{n,\theta},F_{n,\boldsymbol{\theta}_{0}}\right)  \right)  $, as follows.
Since $h(x)-h(x_{0})=h^{\prime}(x_{0})(x-x_{0})+o(x-x_{0})$, we have
\begin{align*}
&  \sqrt{n}\left(  D_{\phi}^{h}(F_{n,\widehat{\boldsymbol{\theta}}_{E,n}%
},F_{n,\boldsymbol{\theta}_{0}})-D_{\phi}^{h}(F_{n,\boldsymbol{\theta}^{\ast}%
},F_{n,\boldsymbol{\theta}_{0}})\right) \\
&  =\frac{\partial}{\partial x}\left.  h(x)\right\vert _{x=D_{\phi}%
^{h}(F_{n,\boldsymbol{\theta}^{\ast}},F_{n,\boldsymbol{\theta}_{0}})}(D_{\phi
}((F_{n,\widehat{\boldsymbol{\theta}}_{E,n}},F_{n,\boldsymbol{\theta}_{0}%
})-D_{\phi}(F_{n,\boldsymbol{\theta}^{\ast}},F_{n,\boldsymbol{\theta}_{0}%
}))+o_{p}(1),
\end{align*}
and so%
\[
\sqrt{n}\left(  D_{\phi}^{h}(F_{n,\widehat{\boldsymbol{\theta}}_{E,n}%
},F_{n,\boldsymbol{\theta}_{0}})-D_{\phi}^{h}(F_{n,\boldsymbol{\theta}^{\ast}%
},F_{n,\boldsymbol{\theta}_{0}})\right)  \overset{\mathcal{L}%
}{\underset{n\rightarrow\infty}{\longrightarrow}}\mathcal{N}(0,\sigma_{h,\phi
}^{2}\left(  \boldsymbol{\theta}^{\ast},\boldsymbol{\boldsymbol{\theta}_{0}%
}\right)  ),
\]
where%
\[
\sigma_{h,\phi}^{2}\left(  \boldsymbol{\theta}^{\ast}%
,\boldsymbol{\boldsymbol{\theta}_{0}}\right)  =\kappa_{h,\phi}^{2}\left(
\boldsymbol{\theta}^{\ast},\boldsymbol{\boldsymbol{\theta}_{0}}\right)
\sigma_{\phi}^{2}\left(  \boldsymbol{\theta}^{\ast}%
,\boldsymbol{\boldsymbol{\theta}_{0}}\right)  ,
\]
$\kappa_{h,\phi}\left(  \boldsymbol{\theta}^{\ast}%
,\boldsymbol{\boldsymbol{\theta}_{0}}\right)  =\frac{\partial}{\partial
x}\left.  h(x)\right\vert _{x=D_{\phi}^{h}\left(  F_{n,\boldsymbol{\theta
}^{\ast}},F_{n,\boldsymbol{\theta}_{0}}\right)  }$ and $\sigma_{\phi}%
^{2}\left(  \boldsymbol{\theta}^{\ast},\boldsymbol{\boldsymbol{\theta}_{0}%
}\right)  $\ as defined in Theorem \ref{th5}.
\end{remark}

\begin{remark}
\label{beta1}From Theorem \ref{th5}, we can present a first approximation of
the power function, at $\boldsymbol{\theta}^{\ast}\neq\boldsymbol{\theta}_{0}%
$, of the test based on $\phi$-divergence measure, as%
\begin{equation}
\beta_{n,\phi}^{1}(\boldsymbol{\theta}^{\ast},\boldsymbol{\boldsymbol{\theta
}_{0}})=1-\Phi\left(  \frac{\sqrt{n}}{\sigma_{\phi}\left(  \boldsymbol{\theta
}^{\ast},\boldsymbol{\boldsymbol{\theta}_{0}}\right)  }\left(  \frac
{\phi^{\prime\prime}(1)\chi_{p,\alpha}^{2}}{2n}-D_{\phi}%
(F_{n,\boldsymbol{\theta}^{\ast}},F_{n,\boldsymbol{\theta}_{0}})\right)
\right)  , \label{beta1A}%
\end{equation}
with $\Phi(x)$ being the standard normal distribution function.\newline If
some alternative $\boldsymbol{\theta}^{\ast}\neq\boldsymbol{\theta}_{0}$ is
the true parameter, then the probability of rejecting $\boldsymbol{\theta}%
_{0}$ with the rejection rule $S_{n}^{\phi}(\widehat{\boldsymbol{\theta}%
}_{E,n},\boldsymbol{\theta}_{0})>\chi_{p,\alpha}^{2}$, for fixed significance
level $\alpha$, tends to one as $n\rightarrow\infty$. Thus, the test is
consistent in the sense of Fraser (1957). Similarly, from Theorem \ref{th5},
we obtain a first approximation of the power function, at $\boldsymbol{\theta
}^{\ast}\neq\boldsymbol{\theta}_{0}$, of the test based on $(h,\phi
)$-divergence measure as
\begin{equation}
\beta_{n,h,\phi}^{1}(\boldsymbol{\theta}^{\ast},\boldsymbol{\boldsymbol{\theta
}_{0}})=1-\Phi\left(  \frac{\sqrt{n}}{\sigma_{h,\phi}\left(
\boldsymbol{\theta}^{\ast},\boldsymbol{\boldsymbol{\theta}_{0}}\right)
}\left(  \frac{\phi^{\prime\prime}(1)\chi_{p,\alpha}^{2}}{2n}-D_{\phi}%
^{h}(F_{n,\boldsymbol{\theta}^{\ast}},F_{n,\boldsymbol{\theta}_{0}})\right)
\right)  . \label{beta1B}%
\end{equation}

\end{remark}

To produce some less trivial asymptotic powers that are not all equal to $1$,
we can use a Pitman-type local analysis, as developed by Le Cam (1960), by
confining attention to $n^{1/2}$-neighborhoods of the true parameter values. A
key tool to get the asymptotic distribution of the statistic $S_{n}^{\phi
}(\widehat{\boldsymbol{\theta}}_{E,n},\boldsymbol{\theta}_{0})$ under such a
contiguous hypothesis is Le Cam's third lemma, as presented in H\'{a}jek and
Sid\'{a}k (1967). Instead of relying on these results, we present in the
following theorem a proof which is easy and direct to follow. This proof is
based on the results of Morales and Pardo (2001). Specifically, we consider
the power at contiguous alternatives of the form
\begin{equation}
H_{1,n}:\boldsymbol{\theta}_{n}=\boldsymbol{\theta}_{0}+n^{-1/2}%
\boldsymbol{f}, \label{cont}%
\end{equation}
where $\boldsymbol{f}$ is a fixed vector in $\mathbb{R}^{p}$ such that
$\boldsymbol{\theta}_{n}\in\Theta\subset\mathbb{R}^{p}$.

\begin{theorem}
\label{Th1B}Under the assumptions of Theorem \ref{Th1} and $H_{1,n}$ in
(\ref{cont}), we have the asymptotic distribution of the empirical $\phi
$-divergence test statistic $S_{n}^{\phi}(\widehat{\boldsymbol{\theta}}%
_{E,n},\boldsymbol{\theta}_{0})$ to be non-central chi-squared with $p$
degrees of freedom and non-centrality parameter%
\[
S_{n}^{\phi}(\widehat{\boldsymbol{\theta}}_{E,n},\boldsymbol{\theta}%
_{0})\overset{\mathcal{L}}{\underset{n\rightarrow\infty}{\longrightarrow}}%
\chi_{p}^{2}(\delta\boldsymbol{\left(  \boldsymbol{\theta}_{0}\right)  }),
\]
where
\begin{equation}
\delta\boldsymbol{\left(  \boldsymbol{\theta}_{0}\right)  }=\boldsymbol{f}%
^{T}\boldsymbol{V}^{-1}\boldsymbol{\left(  \boldsymbol{\theta}_{0}\right)  f}.
\label{ncp}%
\end{equation}

\begin{proof}
We can write%
\[
\sqrt{n}(\widehat{\boldsymbol{\theta}}_{E,n}-\boldsymbol{\theta}_{0})=\sqrt
{n}(\widehat{\boldsymbol{\theta}}_{E,n}-\boldsymbol{\theta}_{n})+\sqrt
{n}\left(  \boldsymbol{\theta}_{n}-\boldsymbol{\theta}_{0}\right)  =\sqrt
{n}(\widehat{\boldsymbol{\theta}}_{E,n}-\boldsymbol{\theta}_{n}%
)+\boldsymbol{f.}%
\]
Under $H_{1,n}$, we have
\[
\sqrt{n}(\widehat{\boldsymbol{\theta}}_{E,n}-\boldsymbol{\theta}%
_{n})\overset{\mathcal{L}}{\underset{n\rightarrow\infty}{\longrightarrow}%
}\mathcal{N}(\boldsymbol{0},\boldsymbol{V}\left(  \boldsymbol{\theta}%
_{0}\right)  )
\]
and
\[
\sqrt{n}(\widehat{\boldsymbol{\theta}}_{E,n}-\boldsymbol{\theta}%
_{0})\overset{\mathcal{L}}{\underset{n\rightarrow\infty}{\longrightarrow}%
}\mathcal{N}(\boldsymbol{f},\boldsymbol{V}(\boldsymbol{\theta}_{0}%
))\boldsymbol{.}%
\]
In Theorem \ref{Th1}, it has been shown that
\[
S_{n}^{\phi}(\widehat{\boldsymbol{\theta}}_{E,n},\boldsymbol{\theta}%
_{0})=\left(  \boldsymbol{V}(\boldsymbol{\theta}_{0})^{-1/2}\sqrt
{n}(\widehat{\boldsymbol{\theta}}_{E,n}-\boldsymbol{\theta}_{0})\right)
^{T}\boldsymbol{V}(\boldsymbol{\theta}_{0})^{-1/2}\sqrt{n}%
(\widehat{\boldsymbol{\theta}}_{E,n}-\boldsymbol{\theta}_{0})+o_{p}(1).
\]
On the other hand, we have%
\[
\boldsymbol{V}(\boldsymbol{\theta}_{0})^{-1/2}\sqrt{n}%
(\widehat{\boldsymbol{\theta}}_{E,n}-\boldsymbol{\theta}_{0}%
)\overset{\mathcal{L}}{\underset{n\rightarrow\infty}{\longrightarrow}%
}\mathcal{N}(\boldsymbol{V}(\boldsymbol{\theta}_{0})^{-1/2}\boldsymbol{f}%
,\boldsymbol{I}_{p}).
\]
We thus obtain%
\[
S(F_{n,\widehat{\boldsymbol{\theta}}_{E,n}},F_{n,\boldsymbol{\theta}_{0}%
})\overset{\mathcal{L}}{\underset{n\rightarrow\infty}{\longrightarrow}}%
\chi_{p}^{2}(\delta(\boldsymbol{\boldsymbol{\theta}_{0}})),
\]
with $\delta(\boldsymbol{\boldsymbol{\theta}_{0}})$ as in (\ref{ncp}).
\end{proof}
\end{theorem}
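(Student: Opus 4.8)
Theorem \ref{Th1B} — asymptotic distribution under contiguous alternatives.

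The plan is to leverage exactly the machinery already built in Theorem \ref{Th1}, where it was established that the statistic admits the second-order Taylor expansion $S_{n}^{\phi}(\widehat{\boldsymbol{\theta}}_{E,n},\boldsymbol{\theta}_{0})=\boldsymbol{U}_{n}^{T}\boldsymbol{U}_{n}+o_{p}(1)$ with $\boldsymbol{U}_{n}=\sqrt{n}\boldsymbol{V}^{-1/2}(\boldsymbol{\theta}_{0})(\widehat{\boldsymbol{\theta}}_{E,n}-\boldsymbol{\theta}_{0})$. The key observation is that this algebraic identity holds regardless of which distribution generates the data, since it follows purely from Taylor-expanding the $\phi$-divergence about the point where the two probability vectors coincide. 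Hence the only thing that changes when we pass from $H_{0}$ to the contiguous alternative $H_{1,n}$ is the limiting distribution of the standardized estimator $\boldsymbol{U}_{n}$.

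First I would decompose $\sqrt{n}(\widehat{\boldsymbol{\theta}}_{E,n}-\boldsymbol{\theta}_{0})=\sqrt{n}(\widehat{\boldsymbol{\theta}}_{E,n}-\boldsymbol{\theta}_{n})+\sqrt{n}(\boldsymbol{\theta}_{n}-\boldsymbol{\theta}_{0})$, where the second term equals the fixed vector $\boldsymbol{f}$ by definition of $H_{1,n}$ in (\ref{cont}). Since $\boldsymbol{\theta}_{n}$ is the \emph{true} parameter value under the alternative, the representation (\ref{thetaHat}) together with the central limit theorem applied at the true parameter gives $\sqrt{n}(\widehat{\boldsymbol{\theta}}_{E,n}-\boldsymbol{\theta}_{n})\overset{\mathcal{L}}{\longrightarrow}\mathcal{N}(\boldsymbol{0}_{p},\boldsymbol{V}(\boldsymbol{\theta}_{0}))$, where one uses that $\boldsymbol{V}(\boldsymbol{\theta}_{n})\to\boldsymbol{V}(\boldsymbol{\theta}_{0})$ by continuity of the matrices $\boldsymbol{S}_{11}$ and $\boldsymbol{S}_{12}$ in a neighbourhood of $\boldsymbol{\theta}_{0}$, guaranteed by the regularity assumptions iii)--iv). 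Adding the deterministic shift $\boldsymbol{f}$ yields $\sqrt{n}(\widehat{\boldsymbol{\theta}}_{E,n}-\boldsymbol{\theta}_{0})\overset{\mathcal{L}}{\longrightarrow}\mathcal{N}(\boldsymbol{f},\boldsymbol{V}(\boldsymbol{\theta}_{0}))$.

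Next I would premultiply by $\boldsymbol{V}^{-1/2}(\boldsymbol{\theta}_{0})$ to obtain $\boldsymbol{U}_{n}\overset{\mathcal{L}}{\longrightarrow}\mathcal{N}(\boldsymbol{V}^{-1/2}(\boldsymbol{\theta}_{0})\boldsymbol{f},\boldsymbol{I}_{p})$. Since the quadratic form $\boldsymbol{U}_{n}^{T}\boldsymbol{U}_{n}$ of an asymptotically $\mathcal{N}(\boldsymbol{\mu},\boldsymbol{I}_{p})$ vector converges to a noncentral chi-squared with $p$ degrees of freedom and noncentrality $\|\boldsymbol{\mu}\|^{2}$, and here $\|\boldsymbol{V}^{-1/2}(\boldsymbol{\theta}_{0})\boldsymbol{f}\|^{2}=\boldsymbol{f}^{T}\boldsymbol{V}^{-1}(\boldsymbol{\theta}_{0})\boldsymbol{f}=\delta(\boldsymbol{\theta}_{0})$ as in (\ref{ncp}), an application of the continuous mapping theorem and Slutsky's lemma (to absorb the $o_{p}(1)$ remainder) delivers the claim.

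The one step requiring genuine care, rather than routine computation, is justifying that the $o_{p}(1)$ remainder in the Taylor expansion (\ref{TaylorS}) remains $o_{p}(1)$ under the sequence of alternatives $H_{1,n}$ rather than merely under $H_{0}$. Because $H_{1,n}$ is contiguous to $H_{0}$ — the parameter drifts only at rate $n^{-1/2}$ — the two sequences of laws are mutually contiguous, so any quantity that is $o_{p}(1)$ under $H_{0}$ remains $o_{p}(1)$ under $H_{1,n}$; this is precisely where Le Cam's third lemma would enter in a more abstract treatment. The direct route I would take instead, following Morales and Pardo (2001), is to note that the remainder is controlled by $\|\widehat{\boldsymbol{\theta}}_{E,n}-\boldsymbol{\theta}_{0}\|^{2}=O_{p}(n^{-1})$ uniformly, which holds identically under either hypothesis because $\sqrt{n}(\widehat{\boldsymbol{\theta}}_{E,n}-\boldsymbol{\theta}_{0})$ is bounded in probability in both cases, so no contiguity argument is actually needed beyond the tightness just established.
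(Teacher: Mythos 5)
Your proposal is correct and follows essentially the same route as the paper's own proof: the decomposition $\sqrt{n}(\widehat{\boldsymbol{\theta}}_{E,n}-\boldsymbol{\theta}_{0})=\sqrt{n}(\widehat{\boldsymbol{\theta}}_{E,n}-\boldsymbol{\theta}_{n})+\boldsymbol{f}$, the limiting law $\mathcal{N}(\boldsymbol{f},\boldsymbol{V}(\boldsymbol{\theta}_{0}))$, and the quadratic-form expansion from Theorem \ref{Th1} yielding $\chi_{p}^{2}(\boldsymbol{f}^{T}\boldsymbol{V}^{-1}(\boldsymbol{\theta}_{0})\boldsymbol{f})$ are exactly the steps used there. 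Your closing paragraph justifying that the $o_{p}(1)$ remainder in (\ref{TaylorS}) survives the passage to $H_{1,n}$ is in fact more careful than the paper, which takes this for granted.
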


\begin{remark}
Notice that $D_{\phi}^{h}(F_{n,\widehat{\boldsymbol{\theta}}_{E,n}%
},F_{n,\boldsymbol{\theta}_{0}})$ and $h^{\prime}(0)D_{\phi}%
(F_{n,\widehat{\boldsymbol{\theta}}_{E,n}},F_{n,\boldsymbol{\theta}_{0}})$
have the same asymptotic distribution under the contiguous alternative
hypothesis in (\ref{cont}). Hence, it is straightforward to extend Theorem
\ref{Th1B} for $T_{n}^{\phi,h}(\widehat{\boldsymbol{\theta}}_{E,n}%
,\boldsymbol{\theta}_{0})$ and to conclude that both test statistics,
$T_{n}^{\phi,h}(\widehat{\boldsymbol{\theta}}_{E,n},\boldsymbol{\theta}_{0})$
and $S_{n}^{\phi}(\widehat{\boldsymbol{\theta}}_{E,n},\boldsymbol{\theta}%
_{0})$, have the same asymptotic distribution under the contiguous alternative hypothesis.
\end{remark}

\begin{remark}
\label{beta2}Using Theorem \ref{Th1B} and the preceding remark, we obtain a
second approximation to the power function, at $\boldsymbol{\theta}%
_{n}=\boldsymbol{\theta}_{0}+n^{-1/2}\boldsymbol{f}$, as
\begin{align}
\beta_{n}^{2}\left(  \boldsymbol{\theta}_{n}\right)   &  =\Pr(S_{n}^{\phi
}(\widehat{\boldsymbol{\theta}}_{E,n},\boldsymbol{\theta}_{0})>\chi_{p,\alpha
}^{2}|\boldsymbol{\theta}=\boldsymbol{\theta}_{n})\nonumber\\
&  =\Pr(T_{n}^{\phi,h}(\widehat{\boldsymbol{\theta}}_{E,n},\boldsymbol{\theta
}_{0})>\chi_{p,\alpha}^{2}|\boldsymbol{\theta}=\boldsymbol{\theta}%
_{n})=1-F_{\chi_{p}^{2}\left(  \delta(\boldsymbol{\boldsymbol{\theta}_{0}%
})\right)  }\left(  \chi_{p,\alpha}^{2}\right)  , \label{bet2}%
\end{align}
where $F_{\chi_{p}^{2}\left(  \delta(\boldsymbol{\boldsymbol{\theta}_{0}%
})\right)  }$ is the distribution function of $\chi_{p}^{2}\left(
\delta(\boldsymbol{\boldsymbol{\theta}_{0}})\right)  $.\newline If we want to
approximate the power at the alternative $\boldsymbol{\theta}\neq
\boldsymbol{\theta}_{0}$, then we can take $\boldsymbol{f}=\boldsymbol{f}%
(n,\boldsymbol{\theta},\boldsymbol{\theta}_{0})=n^{1/2}\left(
\boldsymbol{\theta}-\boldsymbol{\theta}_{0}\right)  $. It should be noted that
this approximation is independent of the function $\phi$ in $\Phi$, as well as
of the $h$ function.
\end{remark}

\section{Analysis of Newcomb's measurements on the passage time of
light\label{Example}}

In this section, we make use of a well-known data to illustrate the
application of the empirical phi-divergence test statistics for testing a
simple null hypothesis. In 1882, the astronomer and mathematician Simon
Newcomb, measured the time required for a light signal to pass from his
laboratory on the Potomac River to a mirror at the base of the Washington
Monument and back, a distance of $7443.73$ meters. Table \ref{tNewcomb}
contains these measurements from three samples, as deviations from $24,800$
nanoseconds. For example, for the first observation, $28$ represents the
$24,828$ nanoseconds that were spent for the light to travel the required
$7443.73$ meters. The data contain three samples, of sizes $20$, $20$ and
$26$, respectively, corresponding to three different days. They have been
analyzed previously by a number of authors including Stigler (1973) and Voinov
et al. (2013).

Our interest here is in obtaining confidence intervals for the mean, $\mu$, of
the passage time of light, $X$. This means that a unique estimating function,
$g(X,\mu)=X-\mu$, is required\ for a univariate random variable ($k=1$). Note
that $r=p=1$, and so $t(\widehat{\mu}_{E,n})=0$. Based on the results of
Theorem \ref{th3}, we shall use the empirical R\'{e}nyi test statistics%
\begin{equation}
S_{n,a}^{\text{\textrm{R\'{e}nyi}}}\left(  \mu_{0}\right)  =S_{n,a}%
^{\text{\textrm{R\'{e}nyi}}}\left(  F_{n,\widehat{\mu}_{E,n}},F_{n,\mu_{0}%
}\right)  =\left\{
\begin{array}
[c]{ll}%
\frac{2n}{a\left(  a-1\right)  }\log%
{\textstyle\sum\limits_{i=1}^{n}}
\frac{1}{n}\left(  1+t(\mu_{0})(X_{i}-\mu_{0})\right)  ^{a-1}, & a\neq0,1,\\
-2%
{\textstyle\sum\limits_{i=1}^{n}}
\dfrac{\log\left(  1+t(\mu_{0})(X_{i}-\mu_{0})\right)  }{1+t(\mu_{0}%
)(X_{i}-\mu_{0})}, & a=0,\\
2%
{\textstyle\sum\limits_{i=1}^{n}}
\log\left(  1+t(\mu_{0})(X_{i}-\mu_{0})\right)  & a=1,
\end{array}
\right.  \label{RenyiEx}%
\end{equation}
where $t(\mu_{0})$ is the solution of the equation $\frac{1}{n}\sum_{i=1}%
^{n}\frac{X_{i}-\mu_{0}}{1+t(\mu_{0})(X_{i}-\mu_{0})}=0$ subject to
$\min\{t(\mu_{0})(X_{i}-\mu_{0})\}_{i=1,...,n}>\frac{1-n}{n}$ and $\frac{1}%
{n}\sum_{i=1}^{n}\frac{1}{1+t(\mu_{0})(X_{i}-\mu_{0})}=1$.%

\begin{table}[htbp] \tabcolsep2.8pt  \centering
$%
\begin{tabular}
[c]{ccccccccccccccccccccccccccc}\cline{1-21}%
day 1 & $28$ & $26$ & $33$ & $24$ & $34$ & $-44$ & $27$ & $16$ & $40$ & $-2$ &
$29$ & $22$ & $24$ & $21$ & $25$ & $30$ & $23$ & $29$ & $31$ & $19$ &  &  &  &
&  & \\\cline{1-21}%
day 2 & $24$ & $20$ & $36$ & $32$ & $36$ & $28$ & $25$ & $21$ & $28$ & $29$ &
$37$ & $25$ & $28$ & $26$ & $30$ & $32$ & $36$ & $26$ & $30$ & $22$ &  &  &  &
&  & \\\hline
day 3 & $36$ & $23$ & $27$ & $27$ & $28$ & $27$ & $31$ & $27$ & $26$ & $33$ &
$26$ & $32$ & $32$ & $24$ & $39$ & $28$ & $24$ & $25$ & $32$ & $25$ & $29$ &
$27$ & $28$ & $29$ & $16$ & $23$\\\hline
\end{tabular}
\ \ $\caption{Newcomb's data on the passage time of light.\label{tNewcomb}}%
\end{table}%

A confidence interval, with confidence level $1-\alpha$, based on
(\ref{RenyiEx})\ is given by%
\[
CI_{1-\alpha}^{a}(\mu)=\{\mu\in%
\mathbb{R}
:S_{n,a}^{\text{\textrm{R\'{e}nyi}}}\left(  \mu\right)  \leq\chi_{1,\alpha
}^{2}\},
\]
with $\chi_{1,\alpha}^{2}$ being the (right hand) quantile of order $\alpha$
for the $\chi_{1}^{2}$\ distribution. By taking different choices for the
parameter $a$ as $\{-1,-0.5,0,0.5,1,1.5,2.5\}$, we obtain different confidence
intervals, and this is done for each of the three days. Table \ref{tCI}
summarizes these results obtained at $95\%$ confidence level. If the
confidence interval based on the Likelihood Ratio test statistic, i.e.,
$CI_{0.95}^{1}(\mu)$ are compared with $CI_{0.95}^{a}(\mu)$, we observe that
narrower confidence intervals are obtained for $a<1$, and wider confidence
intervals are obtained for $a>1$. This behavior is more evident for the first
day, due to the presence of some outliers in the sample; see for example,
Bernett and Lewis (1993). Next, we approximate the power function, assuming
that $\mu_{0}$ is a value very close to $\widehat{\mu}_{E,n}$. Despite the
fact that the exact significance may not be approximated well (since the true
value $\mu_{0}$ is unknown), the power values can provide meaningful
information on the quality of performance of the different R\'{e}nyi test statistics.%

\begin{table}[htbp] \tabcolsep2.8pt  \centering
$%
\begin{tabular}
[c]{clcccccc}\hline
& $CI_{0.95}^{-1}(\mu)$ & $CI_{0.95}^{-0.5}(\mu)$ & $CI_{0.95}^{0}(\mu)$ &
$CI_{0.95}^{0.5}(\mu)$ & $CI_{0.95}^{1}(\mu)$ & $CI_{0.95}^{1.5}(\mu)$ &
$CI_{0.95}^{2.5}(\mu)$\\\hline
day 1 & \multicolumn{1}{c}{$(14.22,27.24)$} & $(13.65,27.25)$ &
$(10.85,27.11)$ & $(12.00,27.24)$ & $(12.92,27.24)$ & $(9.46,26.97)$ &
$(6.90,26.60)$\\\hline
day 2 & \multicolumn{1}{c}{$(26.48,30.70)$} & $(26.47,30.70)$ &
$(26.40,30.78)$ & $(26.43,30.74)$ & $(26.45,30.71)$ & $(26.35,30.83)$ &
$(26.24,30.97)$\\\hline
day 3 & \multicolumn{1}{c}{$(26.24,29.42)$} & $(26.27,29.48)$ &
$(26.00,29.70)$ & $(26.10,29.61)$ & $(26.20,29.54)$ & $(25.88,29.79)$ &
$(25.62,30.01)$\\\hline
\end{tabular}
\ \ \ \ $%
\caption{Confidence intervals, based on the empirical R�nyi test statistics, for Newcomb's data for the three days.\label{tCI}}%
\end{table}%

In order to calculate the power function by using the first approximation%
\[
\beta_{n,a}^{1,\text{\textrm{R\'{e}nyi}}}\left(  \mu^{\ast}\right)
=1-\Phi\left(  \frac{\sqrt{n}}{\sigma_{\text{\textrm{R\'{e}nyi}}}^{a}\left(
\mu_{0},\mu^{\ast}\right)  }\left(  \frac{\chi_{1,\alpha}^{2}}{2n}%
-D_{\text{\textrm{R\'{e}nyi}}}^{a}(F_{n,\mu^{\ast}},F_{n,\mu_{0}})\right)
\right)  ,
\]
we need to obtain
\[
\sigma_{\text{\textrm{R\'{e}nyi}}}^{a}\left(  \mu_{0},\mu^{\ast}\right)
=\kappa_{\text{\textrm{R\'{e}nyi}}}^{a}\left(  \mu_{0},\mu^{\ast}\right)
\left\vert \tau_{\text{\textrm{R\'{e}nyi}}}^{a}\left(  \mu_{0},\mu^{\ast
}\right)  \right\vert V^{\frac{1}{2}}\left(  \mu_{0}\right)
\]
in terms of%
\begin{align*}
\kappa_{\text{\textrm{R\'{e}nyi}}}^{a}\left(  \mu_{0},\mu^{\ast}\right)   &
=\frac{\partial}{\partial x}\left.  h(x)\right\vert _{x=D_{\phi_{a}}\left(
F_{n,\mu^{\ast}},F_{n,\mu_{0}}\right)  }=\left\{
\begin{array}
[c]{ll}%
\frac{1}{a\left(  a-1\right)  D_{\phi_{a}}\left(  F_{n,\mu^{\ast}}%
,F_{n,\mu_{0}}\right)  +1}, & a\left(  a-1\right)  \neq0,\\
1, & a\left(  a-1\right)  =0,
\end{array}
\right. \\
&  =\left(  \sum\limits_{i=1}^{n}p_{i}^{a}(\mu^{\ast})p_{i}^{1-a}(\mu
_{0})\right)  ^{-1},
\end{align*}%
\[
p_{i}(\mu)=\frac{1}{n}\frac{1}{1+t(\mu)(X_{i}-\mu)},\quad\mu\in\{\mu_{0}%
,\mu^{\ast}\},
\]%
\[
\tau_{\text{\textrm{R\'{e}nyi}}}^{a}\left(  \mu_{0},\mu^{\ast}\right)
=\left\{
\begin{array}
[c]{ll}%
\dfrac{n}{1-a}%
{\displaystyle\sum\limits_{i=1}^{n}}
\left[  t^{\prime}(\mu^{\ast})(X_{i}-\mu^{\ast})-t(\mu^{\ast})\right]
p_{i}^{a+1}(\mu^{\ast})p_{i}^{1-a}(\mu_{0}), & a\neq1,\\
n%
{\displaystyle\sum\limits_{i=1}^{n}}
\left[  t^{\prime}(\mu^{\ast})(X_{i}-\mu^{\ast})-t(\mu^{\ast})\right]
p_{i}^{2}(\mu^{\ast})\left(  \log\frac{p_{i}(\mu_{0})}{p_{i}(\mu^{\ast}%
)}-1\right)  , & a=1,
\end{array}
\right.
\]%
\[
t^{\prime}(\mu^{\ast})=-\left(  \sum_{j=1}^{n}p_{j}^{2}(\mu^{\ast})\right)
\left(  \sum_{j=1}^{n}(X_{j}-\mu^{\ast})^{2}p_{j}^{2}(\mu^{\ast})\right)
^{-1},
\]
and%
\[
V\left(  \mu_{0}\right)  =\left(  S_{12}^{T}\left(  \mu_{0}\right)
S_{11}^{-1}\left(  \mu_{0}\right)  S_{12}\left(  \mu_{0}\right)  \right)
^{-1}=E\left[  (X-\mu_{0})^{2}\right]  .
\]
Since $V\left(  \mu_{0}\right)  $ is unknown, it can be replaced by a
consistent estimator%
\[
\widetilde{V}\left(  \mu_{0}\right)  =\sum_{j=1}^{n}(X_{j}-\mu_{0})^{2}%
p_{j}(\mu_{0}),
\]
so that%
\[
\widetilde{\beta}_{n,a}^{1}\left(  \mu^{\ast}\right)  =1-\Phi\left(
\frac{\sqrt{n}}{\widetilde{\sigma}_{\text{\textrm{R\'{e}nyi}}}^{a}\left(
\mu_{0},\mu^{\ast}\right)  }\left(  \frac{\chi_{1,\alpha}^{2}}{2n}%
-D_{\text{\textrm{R\'{e}nyi}}}^{a}(F_{n,\mu^{\ast}},F_{n,\mu_{0}})\right)
\right)  ,
\]
where%
\[
\widetilde{\sigma}_{\text{\textrm{R\'{e}nyi}}}^{a}\left(  \mu_{0},\mu^{\ast
}\right)  =\kappa_{\text{\textrm{R\'{e}nyi}}}^{a}\left(  \mu_{0},\mu^{\ast
}\right)  \left\vert \tau_{\text{\textrm{R\'{e}nyi}}}^{a}\left(  \mu_{0}%
,\mu^{\ast}\right)  \right\vert \widetilde{V}^{\frac{1}{2}}\left(  \mu
_{0}\right)  ,
\]%
\[
D_{\text{\textrm{R\'{e}nyi}}}^{a}\left(  F_{n,\mu^{\ast}},F_{n,\mu_{0}%
}\right)  =\left\{
\begin{array}
[c]{ll}%
-\frac{1}{a\left(  a-1\right)  }\log\kappa_{\text{\textrm{R\'{e}nyi}}}%
^{a}\left(  \mu_{0},\mu^{\ast}\right)  , & a\left(  a-1\right)  \neq0,\\
\sum_{i=1}^{n}p_{i}(\mu_{0})\log\frac{p_{i}(\mu_{0})}{p_{i}(\mu^{\ast})}, &
a=0,\\
\sum_{i=1}^{n}p_{i}(\mu^{\ast})\log\frac{p_{i}(\mu^{\ast})}{p_{i}(\mu_{0})}, &
a=1.
\end{array}
\right.
\]
In Figure \ref{fig000}, the power functions are plotted for the three days,
for three different values of the parameter $a\in\{-1,1,2.5\}$. We do observe
from these plots that the most powerful test statistics are for the case when
$a>1$, which is in fact the case when the confidence intervals are wider.

Notice that there are two outliers on Day 1 ($-44$ and $-2$) and one possible
outlier on Day 3 ($16$); see Stigler (1973) and Barnett and Lewis (1994). By
dropping these outliers, we reproduced the plots of approximated power
functions for Days 1 and 3, and we found that in the new plots the
approximated power functions are more similar to that of Day 2. In the
presence of outliers (Days 1 and 3), the comparison of shape of the power
function for different values provides an interesting information: the power
function is flatter for $S_{n,a}^{\text{\textrm{R\'{e}nyi}}}\left(
\mu\right)  $, with $a=-1$ in comparison with $a=1$, $2$. This tends to be
associated with higher capacity of the empirical likelihood ratio test
($S_{n,a}^{\text{\textrm{R\'{e}nyi}}}\left(  \mu\right)  $, with $a=1$) for
detecting samples not fulfilling the null hypothesis, but a lack of robustness
in the presence of outliers. In Section \ref{Simulation2}, a simulation study
is carried out to examine this robustness aspect.

\begin{center}%
\begin{figure}  \centering
$%
\begin{tabular}
[c]{cc}%
\raisebox{-0cm}{\includegraphics[
natheight=2.491500in,
natwidth=3.746400in,
height=4.815cm,
width=7.2071cm
]%
{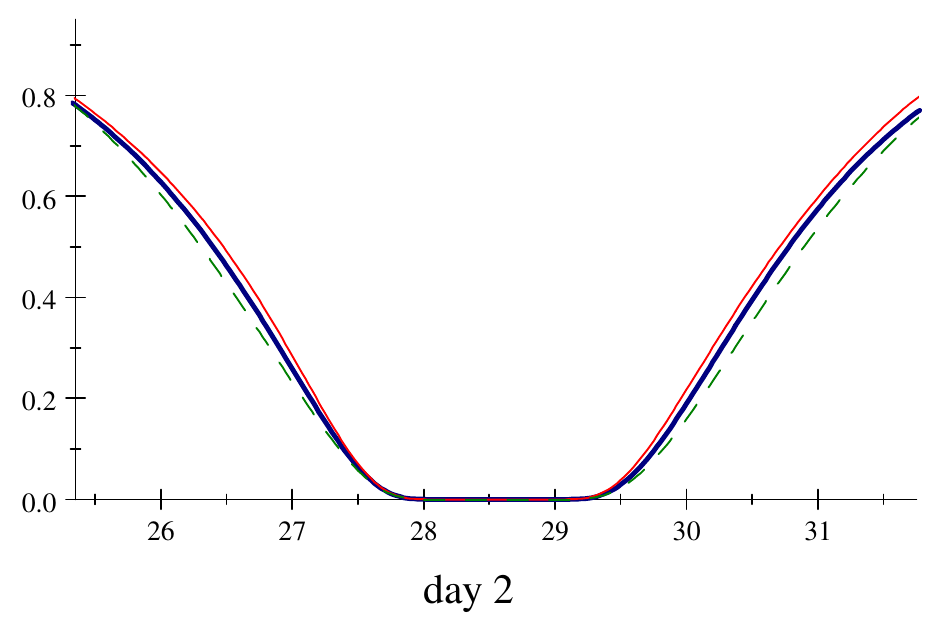}%
}
&
\raisebox{1.0032in}{\fbox{\includegraphics[
natheight=0.677200in,
natwidth=1.152800in,
height=0.7083in,
width=1.1865in
]%
{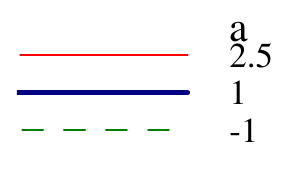}%
}}
\\%
{\includegraphics[
natheight=2.491500in,
natwidth=3.746400in,
height=1.8957in,
width=2.8374in
]%
{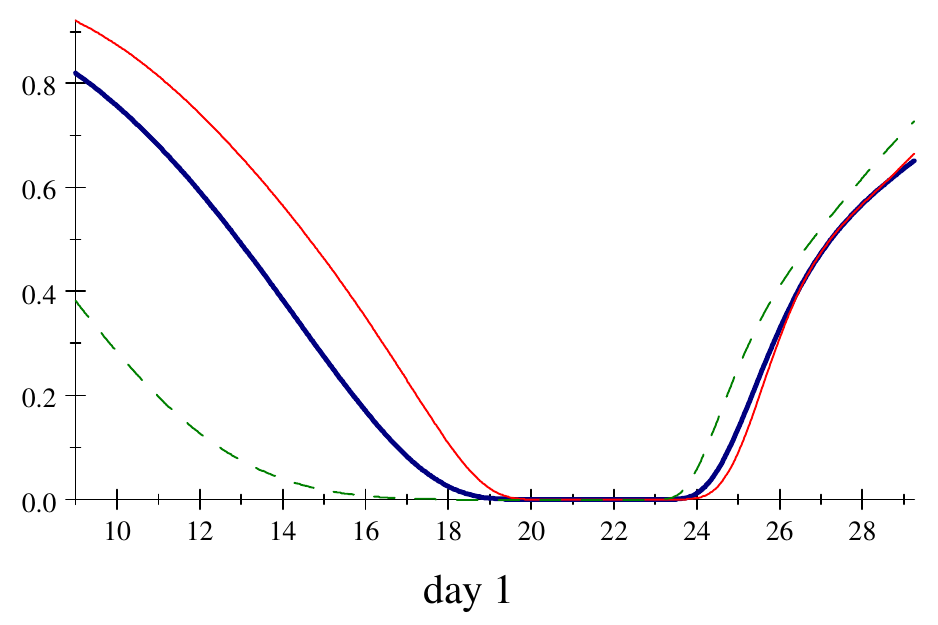}%
}
&
{\includegraphics[
natheight=2.491500in,
natwidth=3.746400in,
height=1.8957in,
width=2.8374in
]%
{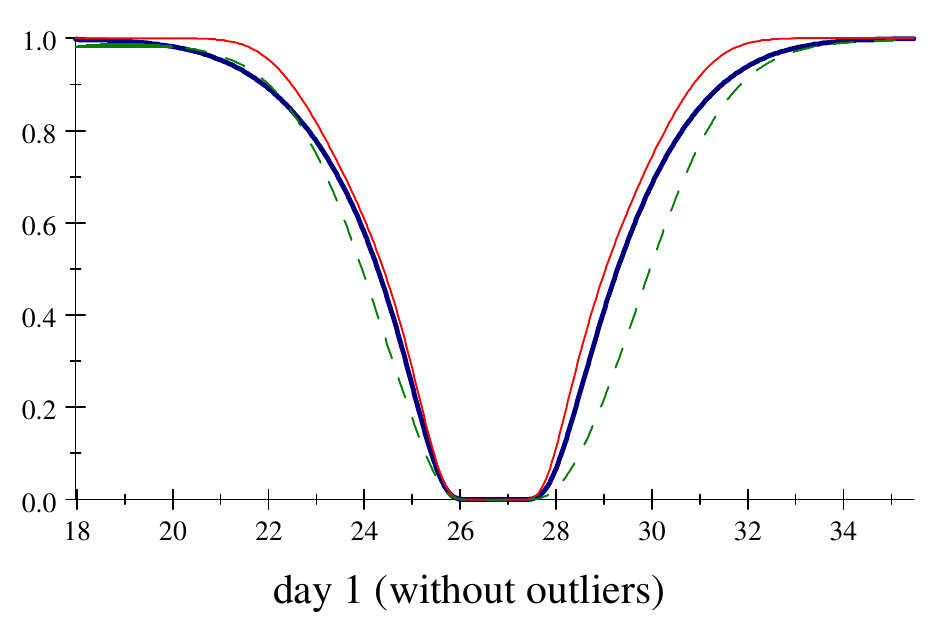}%
}
\\%
{\includegraphics[
natheight=2.491500in,
natwidth=3.746400in,
height=1.8957in,
width=2.8374in
]%
{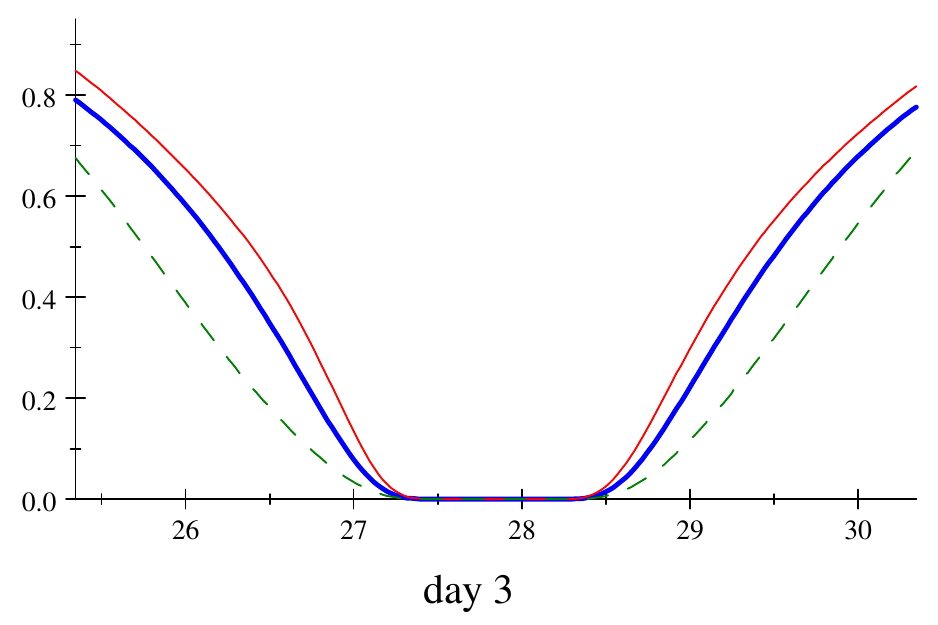}%
}
&
{\includegraphics[
natheight=2.491500in,
natwidth=3.746400in,
height=1.8957in,
width=2.8374in
]%
{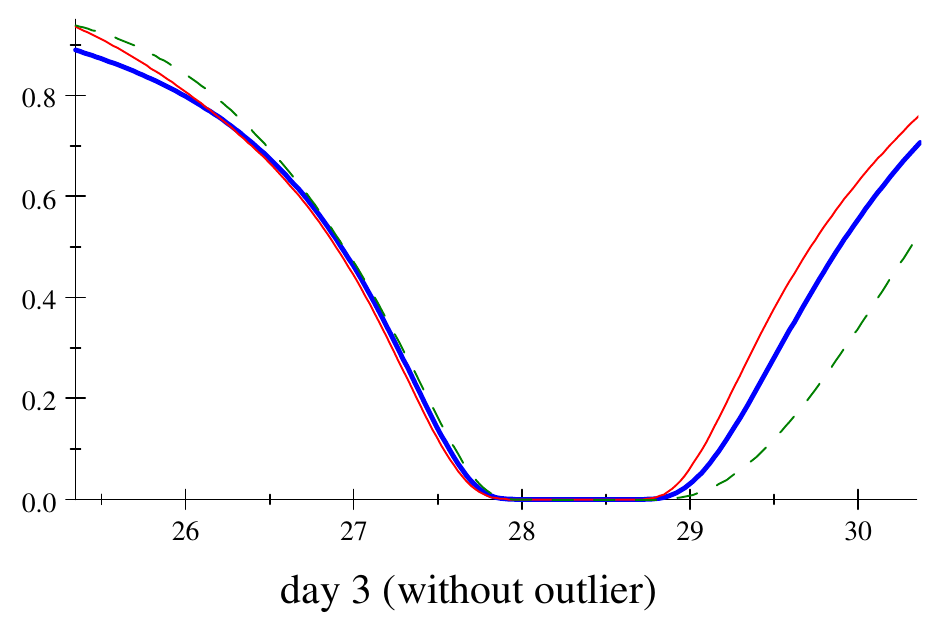}%
}
\end{tabular}
$%
\caption{Approximate power functions $\protect\widetilde{\beta}_{n,a}^{1}\left(  \mu^{\ast}\right)  $ for Newcomb's measurements for three days.\label{fig000}}%
\end{figure}%

\end{center}

\section{Simulation study\label{Simulation}}

\subsection{Evaluation of procedures under normality\label{Simulation1}}

In this section, we will pay special attention to a subfamily of $\phi
$-divergence measures in (\ref{eqDiv}), the so-called power divergence
measures (see Cressie and Read (1984)) for which $\phi(x)\equiv\phi_{\lambda
}(x)\in\Phi$ is given by%
\begin{align}
\phi_{\lambda}(x)  &  =\frac{1}{\lambda(1+\lambda)}\left[  x^{\lambda
+1}-x-\lambda(x-1)\right]  ,\quad\text{if }\lambda(1+\lambda)\neq
0,\label{PD0}\\
\phi_{0}(x)  &  =\lim_{\lambda\rightarrow0}\phi_{\lambda}(x)=x\log
x-x+1,\nonumber\\
\phi_{-1}(x)  &  =\lim_{\lambda\rightarrow-1}\phi_{\lambda}(x)=-\log
x+x-1.\nonumber
\end{align}
In this case, (\ref{F1}) can be expressed as%
\begin{align}
T_{n}^{\lambda}(\widehat{\boldsymbol{\theta}}_{E,n},\boldsymbol{\theta}_{0})
&  =\frac{2}{\lambda(1+\lambda)}\left(  \sum_{i=1}^{n}(1+\boldsymbol{t}%
(\boldsymbol{\theta}_{0})^{T}\boldsymbol{g}(\boldsymbol{X}_{i}%
,\boldsymbol{\theta}_{0}))^{\lambda}-\sum_{i=1}^{n}(1+\boldsymbol{t}%
(\widehat{\boldsymbol{\theta}}_{E,n})^{T}\boldsymbol{g}(\boldsymbol{X}%
_{i},\widehat{\boldsymbol{\theta}}_{E,n}))^{\lambda}\right)  \text{,}%
\quad\text{if }\lambda(1+\lambda)\neq0,\label{PD1}\\
&  =2\sum_{i=1}^{n}\log\left(  \frac{1+\boldsymbol{t}(\boldsymbol{\theta}%
_{0})^{T}\boldsymbol{g}(\boldsymbol{X}_{i},\boldsymbol{\theta}_{0}%
)}{1+\boldsymbol{t}(\widehat{\boldsymbol{\theta}}_{E,n})^{T}\boldsymbol{g}%
(\boldsymbol{X}_{i},\widehat{\boldsymbol{\theta}}_{E,n})}\right)
=L_{E,n}(\widehat{\boldsymbol{\theta}}_{E},\boldsymbol{\theta}_{0}%
),\quad\text{if }\lambda=0,\nonumber\\
&  =2\left(  \sum_{i=1}^{n}\frac{\log(1+\boldsymbol{t}%
(\widehat{\boldsymbol{\theta}}_{E,n})^{T}\boldsymbol{g}(\boldsymbol{X}%
_{i},\widehat{\boldsymbol{\theta}}_{E,n})))}{1+\boldsymbol{t}%
(\widehat{\boldsymbol{\theta}}_{E,n})^{T}\boldsymbol{g}(\boldsymbol{X}%
_{i},\widehat{\boldsymbol{\theta}}_{E,n})}-\sum_{i=1}^{n}\frac{\log
(1+\boldsymbol{t}(\boldsymbol{\theta}_{0})^{T}\boldsymbol{g}(\boldsymbol{X}%
_{i},\boldsymbol{\theta}_{0})))}{1+\boldsymbol{t}(\boldsymbol{\theta}_{0}%
)^{T}\boldsymbol{g}(\boldsymbol{X}_{i},\boldsymbol{\theta}_{0})}\right)
\text{,}\quad\text{if }\lambda=-1.\nonumber
\end{align}
This family of test statistics include several commonly used tests such as the
empirical modified Kullback-Leibler statistic ($T_{n}^{\lambda}%
(\widehat{\boldsymbol{\theta}}_{E,n},\boldsymbol{\theta}_{0})$, with
$\lambda=-1$), the empirical Freeman-Tukey statistic ($T_{n}^{\lambda
}(\widehat{\boldsymbol{\theta}}_{E,n},\boldsymbol{\theta}_{0})$, with
$\lambda=-\frac{1}{2}$), the empirical likelihood ratio test statistic
($T_{n}^{\lambda}(\widehat{\boldsymbol{\theta}}_{E,n},\boldsymbol{\theta}%
_{0})$, with $\lambda=0$), the empirical Cressie-Read statistic ($T_{n}%
^{\lambda}(\widehat{\boldsymbol{\theta}}_{E,n},\boldsymbol{\theta}_{0})$, with
$\frac{2}{3}$), and the empirical Pearson's chi-square statistic
($T_{n}^{\lambda}(\widehat{\boldsymbol{\theta}}_{E,n},\boldsymbol{\theta}%
_{0})$, with $\lambda=1$). Similarly, (\ref{F2}) can be expressed as
\begin{align}
S_{n}^{\lambda}(\widehat{\boldsymbol{\theta}}_{E,n},\boldsymbol{\theta}_{0})
&  =\frac{2}{\lambda(1+\lambda)}\left(  \sum_{i=1}^{n}\frac{(1+\boldsymbol{t}%
(\boldsymbol{\theta}_{0})^{T}\boldsymbol{g}(\boldsymbol{X}_{i}%
,\boldsymbol{\theta}_{0}))^{\lambda}}{(1+\boldsymbol{t}%
(\widehat{\boldsymbol{\theta}}_{E,n})^{T}\boldsymbol{g}(\boldsymbol{X}%
_{i},\widehat{\boldsymbol{\theta}}_{E,n}))^{\lambda+1}}-n\right)
\text{,}\quad\text{if }\lambda(1+\lambda)\neq0,\label{PD2}\\
&  =2\sum_{i=1}^{n}\frac{1}{1+\boldsymbol{t}(\widehat{\boldsymbol{\theta}%
}_{E,n})^{T}\boldsymbol{g}(\boldsymbol{X}_{i},\widehat{\boldsymbol{\theta}%
}_{E,n})}\log\left(  \frac{1+\boldsymbol{t}(\boldsymbol{\theta}_{0}%
)^{T}\boldsymbol{g}(\boldsymbol{X}_{i},\boldsymbol{\theta}_{0})}%
{1+\boldsymbol{t}(\widehat{\boldsymbol{\theta}}_{E,n})^{T}\boldsymbol{g}%
(\boldsymbol{X}_{i},\widehat{\boldsymbol{\theta}}_{E,n})}\right)
\text{,}\quad\text{if }\lambda=0,\nonumber\\
&  =2\sum_{i=1}^{n}\frac{1}{1+\boldsymbol{t}(\boldsymbol{\theta}_{0}%
)^{T}\boldsymbol{g}(\boldsymbol{X}_{i},\boldsymbol{\theta}_{0})}\log\left(
\frac{1+\boldsymbol{t}(\widehat{\boldsymbol{\theta}}_{E,n})^{T}\boldsymbol{g}%
(\boldsymbol{X}_{i},\widehat{\boldsymbol{\theta}}_{E,n})}{1+\boldsymbol{t}%
(\boldsymbol{\theta}_{0})^{T}\boldsymbol{g}(\boldsymbol{X}_{i}%
,\boldsymbol{\theta}_{0})}\right)  \text{,}\quad\text{if }\lambda=-1.\nonumber
\end{align}
The $r$-dimensional vectors $\boldsymbol{t}(\boldsymbol{\theta}_{0})$ and
$\boldsymbol{t}(\widehat{\boldsymbol{\theta}}_{E,n})$\ are obtained by solving
(\ref{ec}) with $\boldsymbol{\theta}_{0}$\ and $\widehat{\boldsymbol{\theta}%
}_{E,n}$\ being specified.

We now examine, through Monte Carlo simulations, the performance of the
confidence intervals (CIs) obtained through the empirical power divergence
test statistics in (\ref{PD1}) and (\ref{PD2}) for the choices $\lambda
\in\{-1,-\frac{1}{2},0,\frac{2}{3},1\}$, when the sample sizes are small,
$n=30,45,60$, and the nominal confidence levels are $90\%$ and $95\%$. Even
though the asymptotic distribution is equivalent for the different empirical
divergence based statistics, in practice we need to work with finite sample
sizes and it is therefore important to compare the CIs based on $T_{n}%
^{\lambda}(\widehat{\boldsymbol{\theta}}_{E,n},\boldsymbol{\theta}_{0})$ and
$S_{n}^{\lambda}(\widehat{\boldsymbol{\theta}}_{E,n},\boldsymbol{\theta}_{0}%
)$, in different scenarios. We focus on the coverage probability and average
width of the CIs of a mean based on $5000$ simulated samples under the same
setting as Example 1 of Qin and Lawless (1994). In this case, a sample of
univariate ($k=1$) i.i.d. random variables, $X_{1},...,X_{n}$, of size $n$ was
considered, with mean $\theta$ and variance $\theta^{2}+1$, i.e., $r=2>p=1$
and%
\[
\boldsymbol{g}(X,\theta)=(g_{1}(X,\theta),g_{2}(X,\theta))^{T}=(X-\theta
,X^{2}-2\theta^{2}-1)^{T}.
\]%
\begin{table}[htbp] \scriptsize\tabcolsep0.8pt  \centering
$%
\begin{tabular}
[c]{llllccccccc}
&  &  &  & \multicolumn{7}{c}{$\mathcal{N}(\theta,\theta^{2}+1)$, $\theta
_{0}=0$}\\\hline
& \hspace*{0.35cm} &  & \hspace*{0.35cm} & \multicolumn{3}{c}{$T_{n}^{\lambda
}(\widehat{\theta}_{E,n},\theta_{0})$} & \hspace*{0.35cm} &
\multicolumn{3}{c}{$S_{n}^{\lambda}(\widehat{\theta}_{E,n},\theta_{0})$%
}\\\cline{4-7}\cline{9-11}
&  &  &  & Cov (\%) & \hspace*{0.35cm} & Avw &  & Cov (\%) & \hspace*{0.35cm}
& Avw\\\hline
&  & \multicolumn{1}{r}{$\lambda$} &  & \multicolumn{7}{c}{$n=30$}\\
$90\%$ &  & \multicolumn{1}{r}{$-1$} &  & 86.36 &  & \textbf{0.524} &  &
\textbf{88.35} &  & \textbf{0.540}\\
&  & \multicolumn{1}{r}{$-\frac{1}{2}$} &  & 87.11 &  & 0.542 &  & 87.98 &  &
0.551\\
&  & \multicolumn{1}{r}{$0$} &  & \textbf{87.77} &  & 0.557 &  & 87.64 &  &
0.560\\
&  & \multicolumn{1}{r}{$\frac{2}{3}$} &  & 87.46 &  & 0.571 &  & 86.88 &  &
0.570\\
&  & \multicolumn{1}{r}{$1$} &  & 87.19 &  & 0.575 &  & 86.55 &  & 0.573\\
$95\%$ &  & \multicolumn{1}{r}{$-1$} &  & 91.97 &  & \textbf{0.617} &  &
\textbf{93.85} &  & \textbf{0.637}\\
&  & \multicolumn{1}{r}{$-\frac{1}{2}$} &  & 92.67 &  & 0.642 &  & 93.77 &  &
0.654\\
&  & \multicolumn{1}{r}{$0$} &  & \textbf{93.05} &  & 0.663 &  & 93.48 &  &
0.669\\
&  & \multicolumn{1}{r}{$\frac{2}{3}$} &  & \textbf{93.05} &  & 0.684 &  &
92.76 &  & 0.684\\
&  & \multicolumn{1}{r}{$1$} &  & 92.88 &  & 0.690 &  & 92.28 &  & 0.689\\
&  & \multicolumn{1}{r}{$\lambda$} &  & \multicolumn{7}{c}{$n=45$}\\
$90\%$ &  & \multicolumn{1}{r}{$-1$} &  & 87.62 &  & \textbf{0.445} &  &
\textbf{88.95} &  & \textbf{0.455}\\
&  & \multicolumn{1}{r}{$-\frac{1}{2}$} &  & 88.45 &  & 0.457 &  & 88.85 &  &
0.462\\
&  & \multicolumn{1}{r}{$0$} &  & \textbf{88.83} &  & 0.469 &  & 88.66 &  &
0.468\\
&  & \multicolumn{1}{r}{$\frac{2}{3}$} &  & 88.70 &  & 0.480 &  & 88.11 &  &
0.474\\
&  & \multicolumn{1}{r}{$1$} &  & 88.51 &  & 0.484 &  & 87.81 &  & 0.476\\
$95\%$ &  & \multicolumn{1}{r}{$-1$} &  & 93.09 &  & \textbf{0.526} &  &
\textbf{94.42} &  & \textbf{0.538}\\
&  & \multicolumn{1}{r}{$-\frac{1}{2}$} &  & 93.43 &  & 0.544 &  & 94.34 &  &
0.550\\
&  & \multicolumn{1}{r}{$0$} &  & 94.02 &  & 0.559 &  & 94.13 &  & 0.559\\
&  & \multicolumn{1}{r}{$\frac{2}{3}$} &  & \textbf{94.23} &  & 0.575 &  &
93.51 &  & 0.569\\
&  & \multicolumn{1}{r}{$1$} &  & 93.81 &  & 0.581 &  & 93.22 &  & 0.573\\
&  & \multicolumn{1}{r}{$\lambda$} &  & \multicolumn{7}{c}{$n=60$}\\
$90\%$ &  & \multicolumn{1}{r}{$-1$} &  & 87.19 &  & \textbf{0.394} &  &
\textbf{88.47} &  & \textbf{0.401}\\
&  & \multicolumn{1}{r}{$-\frac{1}{2}$} &  & 87.86 &  & 0.404 &  & 88.45 &  &
0.405\\
&  & \multicolumn{1}{r}{$0$} &  & \textbf{88.36} &  & 0.413 &  & 88.13 &  &
0.409\\
&  & \multicolumn{1}{r}{$\frac{2}{3}$} &  & 88.28 &  & 0.421 &  & 87.63 &  &
0.413\\
&  & \multicolumn{1}{r}{$1$} &  & 88.03 &  & 0.425 &  & 87.29 &  & 0.415\\
$95\%$ &  & \multicolumn{1}{r}{$-1$} &  & 93.22 &  & \textbf{0.467} &  &
\textbf{93.85} &  & \textbf{0.475}\\
&  & \multicolumn{1}{r}{$-\frac{1}{2}$} &  & 93.45 &  & 0.481 &  & 93.72 &  &
0.483\\
&  & \multicolumn{1}{r}{$0$} &  & \textbf{93.95} &  & 0.493 &  & 93.55 &  &
0.489\\
&  & \multicolumn{1}{r}{$\frac{2}{3}$} &  & 93.78 &  & 0.505 &  & 93.13 &  &
0.496\\
&  & \multicolumn{1}{r}{$1$} &  & 93.70 &  & 0.509 &  & 92.90 &  &
0.498\\\hline
\end{tabular}
\ \ \ \ \ \ \ \ \ \ \ \ \qquad%
\begin{tabular}
[c]{llllccccccc}
&  &  &  & \multicolumn{7}{c}{$\mathcal{N}(\theta,\theta^{2}+1)$, $\theta
_{0}=1$}\\\hline
& \hspace*{0.35cm} &  & \hspace*{0.35cm} & \multicolumn{3}{c}{$T_{n}^{\lambda
}(\widehat{\theta}_{E,n},\theta_{0})$} & \hspace*{0.35cm} &
\multicolumn{3}{c}{$S_{n}^{\lambda}(\widehat{\theta}_{E,n},\theta_{0})$%
}\\\cline{4-7}\cline{9-11}
&  &  &  & Cov (\%) & \hspace*{0.35cm} & Avw &  & Cov (\%) & \hspace*{0.35cm}
& Avw\\\hline
&  & \multicolumn{1}{r}{$\lambda$} &  & \multicolumn{7}{c}{$n=30$}\\
$90\%$ &  & \multicolumn{1}{r}{$-1$} &  & 83.59 &  & 0.627 &  & 79.37 &  &
\textbf{0.542}\\
&  & \multicolumn{1}{r}{$-\frac{1}{2}$} &  & 84.64 &  & 0.635 &  & 79.73 &  &
0.547\\
&  & \multicolumn{1}{r}{$0$} &  & \textbf{85.16} &  & 0.642 &  & 80.11 &  &
0.551\\
&  & \multicolumn{1}{r}{$\frac{2}{3}$} &  & 83.01 &  & 0.608 &  &
\textbf{80.51} &  & 0.555\\
&  & \multicolumn{1}{r}{$1$} &  & 82.81 &  & \textbf{0.605} &  & 80.39 &  &
0.556\\
$95\%$ &  & \multicolumn{1}{r}{$-1$} &  & 88.70 &  & 0.728 &  & 84.36 &  &
\textbf{0.645}\\
&  & \multicolumn{1}{r}{$-\frac{1}{2}$} &  & 89.83 &  & 0.740 &  & 85.08 &  &
0.652\\
&  & \multicolumn{1}{r}{$0$} &  & \textbf{90.39} &  & 0.749 &  & 85.52 &  &
0.657\\
&  & \multicolumn{1}{r}{$\frac{2}{3}$} &  & 89.08 &  & 0.721 &  &
\textbf{85.88} &  & 0.663\\
&  & \multicolumn{1}{r}{$1$} &  & 88.62 &  & \textbf{0.714} &  & 85.82 &  &
0.664\\
&  & \multicolumn{1}{r}{$\lambda$} &  & \multicolumn{7}{c}{$n=45$}\\
$90\%$ &  & \multicolumn{1}{r}{$-1$} &  & 85.13 &  & \textbf{0.476} &  &
84.44 &  & \textbf{0.460}\\
&  & \multicolumn{1}{r}{$-\frac{1}{2}$} &  & 86.03 &  & 0.482 &  & 85.19 &  &
0.464\\
&  & \multicolumn{1}{r}{$0$} &  & \textbf{86.46} &  & 0.488 &  & 85.73 &  &
0.468\\
&  & \multicolumn{1}{r}{$\frac{2}{3}$} &  & 86.20 &  & 0.485 &  &
\textbf{86.03} &  & 0.473\\
&  & \multicolumn{1}{r}{$1$} &  & 86.20 &  & 0.487 &  & 86.01 &  & 0.475\\
$95\%$ &  & \multicolumn{1}{r}{$-1$} &  & 90.37 &  & \textbf{0.563} &  &
89.73 &  & \textbf{0.546}\\
&  & \multicolumn{1}{r}{$-\frac{1}{2}$} &  & 91.12 &  & 0.571 &  & 90.18 &  &
0.553\\
&  & \multicolumn{1}{r}{$0$} &  & 91.45 &  & 0.579 &  & 90.94 &  & 0.559\\
&  & \multicolumn{1}{r}{$\frac{2}{3}$} &  & 91.53 &  & 0.580 &  & 91.02 &  &
0.567\\
&  & \multicolumn{1}{r}{$1$} &  & \textbf{91.62} &  & 0.582 &  &
\textbf{91.19} &  & 0.570\\
&  & \multicolumn{1}{r}{$\lambda$} &  & \multicolumn{7}{c}{$n=60$}\\
$90\%$ &  & \multicolumn{1}{r}{$-1$} &  & 85.46 &  & \textbf{0.413} &  &
84.91 &  & \textbf{0.403}\\
&  & \multicolumn{1}{r}{$-\frac{1}{2}$} &  & 85.99 &  & 0.417 &  & 85.46 &  &
0.406\\
&  & \multicolumn{1}{r}{$0$} &  & 86.28 &  & 0.421 &  & 85.80 &  & 0.410\\
&  & \multicolumn{1}{r}{$\frac{2}{3}$} &  & 86.37 &  & 0.423 &  & 85.90 &  &
0.415\\
&  & \multicolumn{1}{r}{$1$} &  & \textbf{86.39} &  & 0.425 &  &
\textbf{86.09} &  & 0.417\\
$95\%$ &  & \multicolumn{1}{r}{$-1$} &  & 91.02 &  & \textbf{0.489} &  &
90.83 &  & \textbf{0.479}\\
&  & \multicolumn{1}{r}{$-\frac{1}{2}$} &  & 92.08 &  & 0.496 &  & 91.80 &  &
0.485\\
&  & \multicolumn{1}{r}{$0$} &  & 92.58 &  & 0.502 &  & 92.25 &  & 0.490\\
&  & \multicolumn{1}{r}{$\frac{2}{3}$} &  & \textbf{92.92} &  & 0.506 &  &
\textbf{92.56} &  & 0.498\\
&  & \multicolumn{1}{r}{$1$} &  & 92.86 &  & 0.509 &  & 92.54 &  &
0.501\\\hline
\end{tabular}
\ \ \ \ \ \ \ \ \ \ \ \ $%
\caption{Coverage and average width of $90\%$ and $95\%$ confidence intervals.\label{t2}}%
\end{table}
\newline In the present study, we have considered an additional sample of size
$n=45$, compared to those of Qin and Lawless (1994). For constructing the test
statistic, an unknown distribution of $X_{i}$ is assumed, but for this
simulation study, normally distributed random variables were taken with
$\theta_{0}\in\{0,1\}$. It is important to mention here that Baggerly (1998)
studied the coverage probabilities for the normal distribution, but only for
the family of statistics in (\ref{PD1}). The results of the simulation study
are summarized in Table \ref{t2} and Figure \ref{fig1} for the empirical power
divergence test statistics in (\ref{PD1}) and (\ref{PD2}) for two different
samples when the nominal confidence level is $95\%$. In Figure \ref{fig1}, it
can be seen that the confidence intervals based on both statistics,
$T_{n}^{\lambda}(\widehat{\theta}_{E,n},\theta_{0})$\ and $S_{n}^{\lambda
}(\widehat{\theta}_{E,n},\theta_{0})$, with small values of $\lambda
\in\{-1,-\frac{1}{2},0,\frac{2}{3},1\}$ tend to be narrower in width, and this
is also seen in the simulation results of Table \ref{t2}. In fact, the
simulation study of Baggerly showed that $T_{n}^{-1}(\widehat{\theta}%
_{E,n},\theta_{0})$ had the worst coverage probability, and this is also seen
in the present simulation study. This is in accordance with the results
obtained for two specific samples analyzed in Figure \ref{fig1}. The smallest
average width of confidence intervals is obtained for $S_{n}^{-1}%
(\widehat{\theta}_{E,n},\theta_{0})$, which is the so-called empirical
modified likelihood ratio test statistic or empirical minimum discrimination
information statistic (see Gokhale and Kullback, 1978), and this might
explain, when $\theta_{0}=0$, the low coverage probability as well. Often, the
coverage probability closest to the nominal level is also obtained for the
same test statistics, $S_{n}^{-1}(\widehat{\theta}_{E,n},\theta_{0})$ when
$\theta_{0}=1$ but the empirical likelihood ratio test has a closer coverage
probability when $\theta_{0}=0$. When both characteristics of a CI, viz.,
average width and coverage probability, are taken into account, the empirical
modified likelihood ratio test statistic, $S_{n}^{-1}(\widehat{\theta}%
_{E,n},\theta_{0})$, is a good compromise and it turns out to be the best for
$\theta_{0}=0$ in all the cases considered. Finally, another important feature
seen in Figure \ref{fig1} is that even though the support of the asymptotic
distribution of the test statistics is strictly positive, it is possible to
find samples for which $T_{n}^{\lambda}(\widehat{\theta}_{E,n},\theta_{0})$ is
negative, while $S_{n}^{\lambda}(\widehat{\theta}_{E,n},\theta_{0})$ is always
strictly positive. This usually happens when $\widehat{\theta}_{E,n}$\ is very
close to $\theta_{0}$.%

\begin{figure}  \centering
$%
\begin{tabular}
[c]{cc}%
\multicolumn{2}{c}{%
{\fbox{\includegraphics[
natheight=1.218500in,
natwidth=1.540200in,
height=1.0637in,
width=1.337in
]%
{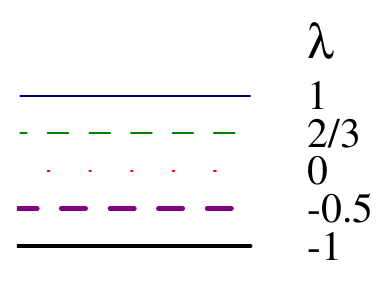}%
}}
}\\%
{\includegraphics[
natheight=2.965400in,
natwidth=4.483200in,
height=2.252in,
width=3.3909in
]%
{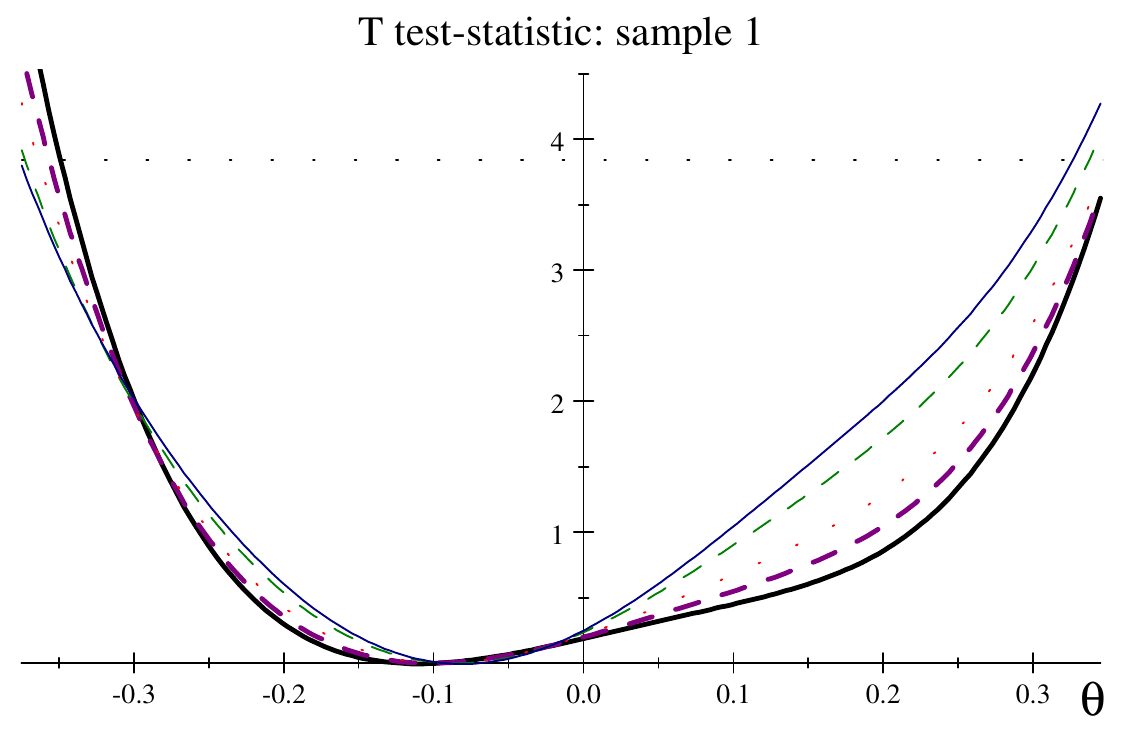}%
}
&
{\includegraphics[
natheight=2.965400in,
natwidth=4.483200in,
height=2.252in,
width=3.3909in
]%
{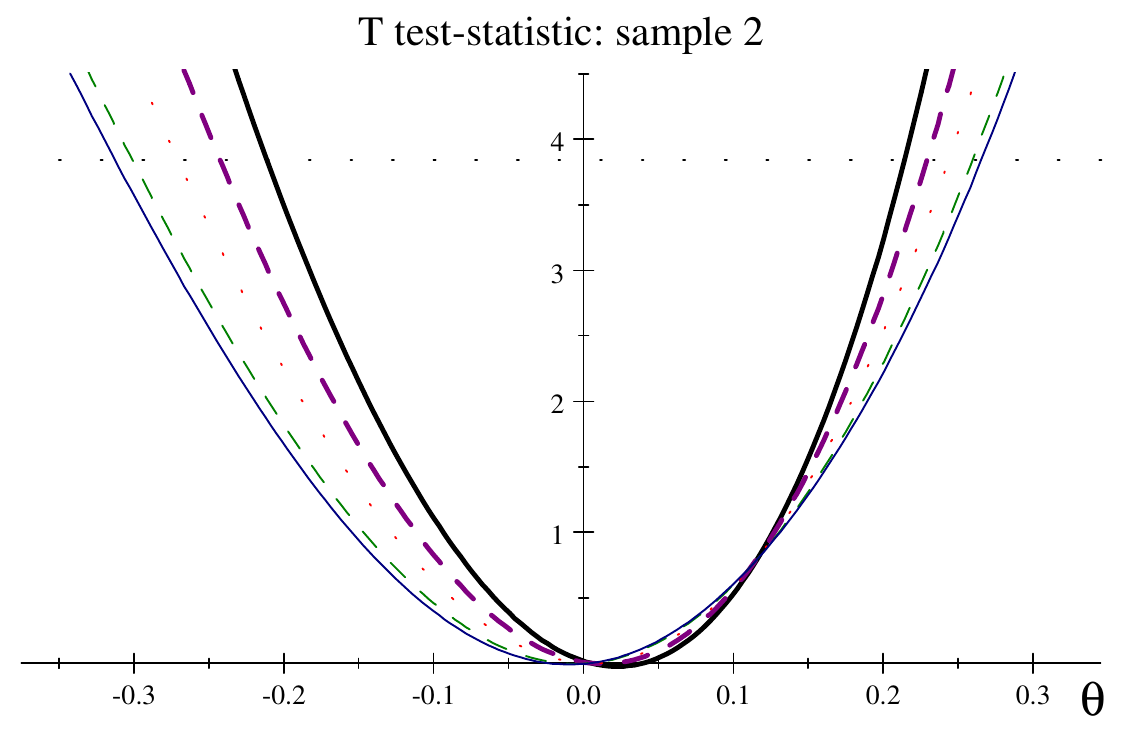}%
}
\\%
{\includegraphics[
natheight=2.965400in,
natwidth=4.483200in,
height=2.252in,
width=3.3909in
]%
{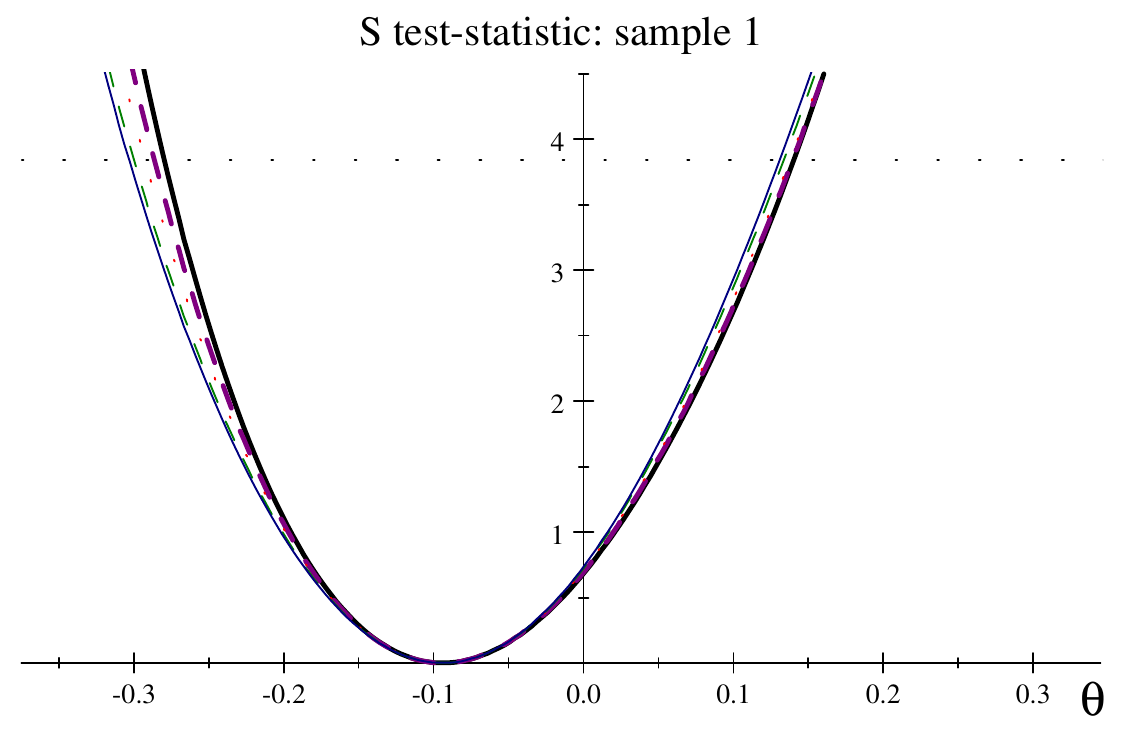}%
}
&
{\includegraphics[
natheight=2.965400in,
natwidth=4.483200in,
height=2.252in,
width=3.3909in
]%
{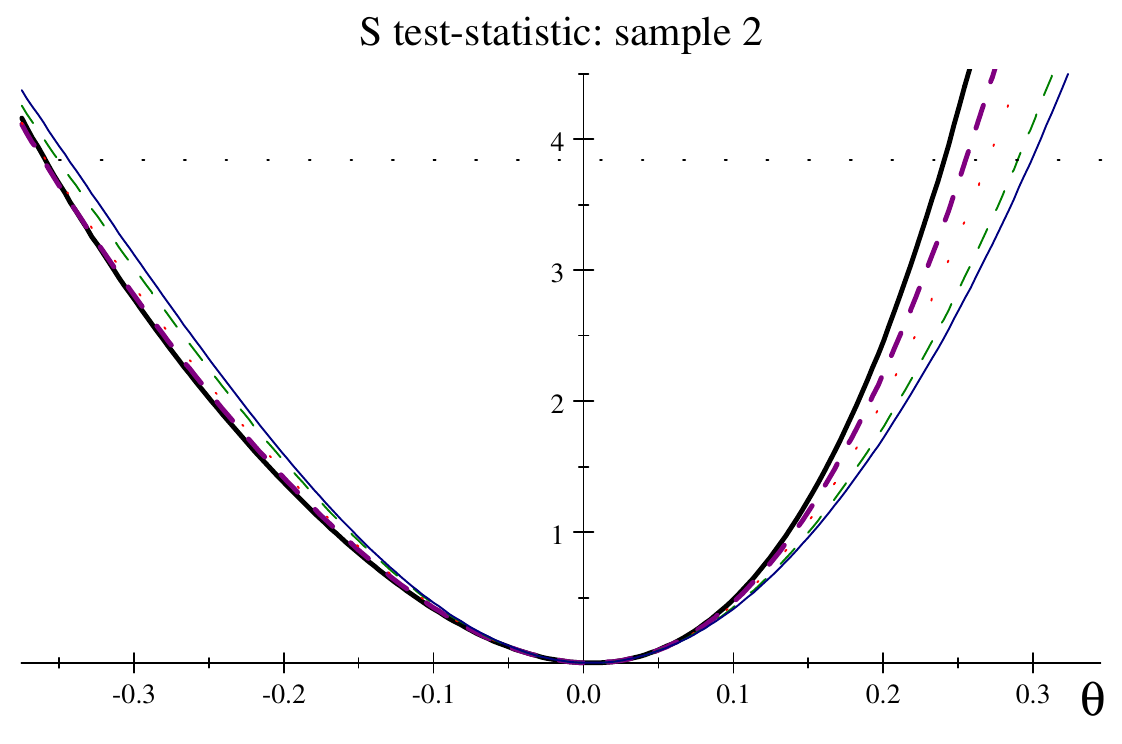}%
}
\end{tabular}
\ $%
\caption{Test statistics for samples 1 and 2 of size 30 from the standard normal population.\label{fig1}}%
\end{figure}%

\subsection{Robustness of procedures to contamination in the
data\label{Simulation2}}

We conducted a simulation study using the same design as in the preceding
subsection, but by considering $\frac{15}{100}\%$ of shifted observations in
each sample: $2$ shifted observations out of $30$ in the sample, $3$ shifted
observations out of $45$, $4$ shifted observations out of $60$. The underlying
distribution for shifted observations is taken to be $\mathcal{N}(\theta
+\sqrt{\theta^{2}+1},\theta^{2}+1)$, given that $\mathcal{N}(\theta,\theta
^{2}+1)$ is the true distribution. Therefore, the shifted observations follow
$\mathcal{N}(1,1)$ distribution when the true distribution is $\mathcal{N}%
(0,1)$, while they follow $\mathcal{N}(2.4142,2)$ distribution when the true
distribution is $\mathcal{N}(1,2)$. In Table \ref{t2b}, the results of
introducing $\frac{15}{100}\%$ of shifted observations is shown. Our interest
is focused on identifying the statistics that are less sensitive (robust
statistics with respect to shifted observations) as well as those that are
quite sensitive (non-robust statistics with respect to shifted observations).
As pointed out in Remark \ref{Remark}, all the test-statistics have the same
infinitesimal robustness, so the differences in robustness could be
atributable to the magnitude of the shift. When $\theta_{0}=0$, the best
statistics with shifted observations are exactly the same as ones without
shifted observations, but when the dispersion of the data is higher, i.e.
$\theta_{0}=1$, the closest coverage probability to the nominal level is not
for the empirical likelihood ratio test; we further observe that
$T_{n}^{\lambda}(\widehat{\theta}_{E,n},\theta_{0})$ and $S_{n}^{\lambda
}(\widehat{\theta}_{E,n},\theta_{0})$ not only have the narrowest CIs but also
the highest coverage probabilities. This means that the empirical modified
likelihood ratio test statistic, $S_{n}^{-1}(\widehat{\theta}_{E,n},\theta
_{0})$, is a robust statistic with respect to shifted observations in
comparison to the empirical likelihood ratio test, with respect to the
coverage probability and width of CIs. This also agrees with the conclusion
obtained from Table \ref{tCI}.%

\begin{table}[htbp] \scriptsize\tabcolsep0.8pt  \centering
$%
\begin{tabular}
[c]{llllccccccc}
&  &  &  & \multicolumn{7}{c}{$\mathcal{N}(\theta,\theta^{2}+1)$, $\theta
_{0}=0$}\\\hline
& \hspace*{0.35cm} &  & \hspace*{0.35cm} & \multicolumn{3}{c}{$T_{n}^{\lambda
}(\widehat{\theta}_{E,n},\theta_{0})$} & \hspace*{0.35cm} &
\multicolumn{3}{c}{$S_{n}^{\lambda}(\widehat{\theta}_{E,n},\theta_{0})$%
}\\\cline{4-7}\cline{9-11}
&  &  &  & Cov (\%) & \hspace*{0.35cm} & Avw &  & Cov (\%) & \hspace*{0.35cm}
& Avw\\\hline
&  & \multicolumn{1}{r}{$\lambda$} &  & \multicolumn{7}{c}{$n=30$}\\
$90\%$ &  & \multicolumn{1}{r}{$-1$} &  & 83.54 &  & \textbf{0.553} &  &
\textbf{84.47} &  & \textbf{0.542}\\
&  & \multicolumn{1}{r}{$-\frac{1}{2}$} &  & 84.39 &  & 0.568 &  & 84.15 &  &
0.552\\
&  & \multicolumn{1}{r}{$0$} &  & \textbf{84.51} &  & 0.580 &  & 83.84 &  &
0.560\\
&  & \multicolumn{1}{r}{$\frac{2}{3}$} &  & 84.17 &  & 0.590 &  & 83.15 &  &
0.569\\
&  & \multicolumn{1}{r}{$1$} &  & 83.80 &  & 0.592 &  & 82.61 &  & 0.572\\
$95\%$ &  & \multicolumn{1}{r}{$-1$} &  & 90.27 &  & \textbf{0.652} &  &
\textbf{90.70} &  & \textbf{0.639}\\
&  & \multicolumn{1}{r}{$-\frac{1}{2}$} &  & 91.00 &  & 0.673 &  & 90.65 &  &
0.655\\
&  & \multicolumn{1}{r}{$0$} &  & \textbf{91.40} &  & 0.691 &  & 90.05 &  &
0.670\\
&  & \multicolumn{1}{r}{$\frac{2}{3}$} &  & 90.90 &  & 0.707 &  & 89.07 &  &
0.684\\
&  & \multicolumn{1}{r}{$1$} &  & 90.33 &  & 0.711 &  & 88.65 &  & 0.689\\
&  & \multicolumn{1}{r}{$\lambda$} &  & \multicolumn{7}{c}{$n=45$}\\
$90\%$ &  & \multicolumn{1}{r}{$-1$} &  & 83.59 &  & \textbf{0.471} &  &
\textbf{83.08} &  & \textbf{0.457}\\
&  & \multicolumn{1}{r}{$-\frac{1}{2}$} &  & 84.10 &  & 0.479 &  &
\textbf{83.08} &  & 0.463\\
&  & \multicolumn{1}{r}{$0$} &  & \textbf{84.24} &  & 0.486 &  & 82.75 &  &
0.469\\
&  & \multicolumn{1}{r}{$\frac{2}{3}$} &  & 83.63 &  & 0.492 &  & 82.03 &  &
0.474\\
&  & \multicolumn{1}{r}{$1$} &  & 83.32 &  & 0.493 &  & 81.62 &  & 0.476\\
$95\%$ &  & \multicolumn{1}{r}{$-1$} &  & 90.30 &  & \textbf{0.557} &  &
\textbf{90.59} &  & \textbf{0.541}\\
&  & \multicolumn{1}{r}{$-\frac{1}{2}$} &  & 91.10 &  & 0.570 &  & 90.18 &  &
0.551\\
&  & \multicolumn{1}{r}{$0$} &  & \textbf{91.41} &  & 0.581 &  & 89.81 &  &
0.560\\
&  & \multicolumn{1}{r}{$\frac{2}{3}$} &  & 91.06 &  & 0.590 &  & 89.22 &  &
0.570\\
&  & \multicolumn{1}{r}{$1$} &  & 90.55 &  & 0.593 &  & 88.56 &  & 0.573\\
&  & \multicolumn{1}{r}{$\lambda$} &  & \multicolumn{7}{c}{$n=60$}\\
$90\%$ &  & \multicolumn{1}{r}{$-1$} &  & 83.15 &  & \textbf{0.418} &  &
\textbf{82.33} &  & \textbf{0.402}\\
&  & \multicolumn{1}{r}{$-\frac{1}{2}$} &  & 83.56 &  & 0.424 &  & 82.14 &  &
0.406\\
&  & \multicolumn{1}{r}{$0$} &  & \textbf{83.65} &  & 0.428 &  & 81.73 &  &
0.410\\
&  & \multicolumn{1}{r}{$\frac{2}{3}$} &  & 83.34 &  & 0.431 &  & 81.28 &  &
0.414\\
&  & \multicolumn{1}{r}{$1$} &  & 82.86 &  & 0.431 &  & 80.93 &  & 0.415\\
$95\%$ &  & \multicolumn{1}{r}{$-1$} &  & 90.10 &  & \textbf{0.495} &  &
\textbf{89.30} &  & \textbf{0.477}\\
&  & \multicolumn{1}{r}{$-\frac{1}{2}$} &  & \textbf{90.39} &  & 0.505 &  &
89.12 &  & 0.484\\
&  & \multicolumn{1}{r}{$0$} &  & 90.17 &  & 0.512 &  & 88.89 &  & 0.490\\
&  & \multicolumn{1}{r}{$\frac{2}{3}$} &  & 89.55 &  & 0.517 &  & 88.34 &  &
0.497\\
&  & \multicolumn{1}{r}{$1$} &  & 89.43 &  & 0.518 &  & 87.92 &  &
0.499\\\hline
\end{tabular}
\ \ \ \ \ \ \ \ \ \ \ \ \ \qquad%
\begin{tabular}
[c]{llllccccccc}
&  &  &  & \multicolumn{7}{c}{$\mathcal{N}(\theta,\theta^{2}+1)$, $\theta
_{0}=1$}\\\hline
& \hspace*{0.35cm} &  & \hspace*{0.35cm} & \multicolumn{3}{c}{$T_{n}^{\lambda
}(\widehat{\theta}_{E,n},\theta_{0})$} & \hspace*{0.35cm} &
\multicolumn{3}{c}{$S_{n}^{\lambda}(\widehat{\theta}_{E,n},\theta_{0})$%
}\\\cline{4-7}\cline{9-11}
&  &  &  & Cov (\%) & \hspace*{0.35cm} & Avw &  & Cov (\%) & \hspace*{0.35cm}
& Avw\\\hline
&  & \multicolumn{1}{r}{$\lambda$} &  & \multicolumn{7}{c}{$n=30$}\\
$90\%$ &  & \multicolumn{1}{r}{$-1$} &  & 85.09 &  & \textbf{0.564} &  &
\textbf{85.58} &  & \textbf{0.575}\\
&  & \multicolumn{1}{r}{$-\frac{1}{2}$} &  & \textbf{85.13} &  & 0.571 &  &
85.37 &  & 0.580\\
&  & \multicolumn{1}{r}{$0$} &  & 84.43 &  & 0.577 &  & 84.72 &  & 0.584\\
&  & \multicolumn{1}{r}{$\frac{2}{3}$} &  & 83.11 &  & 0.582 &  & 83.62 &  &
0.589\\
&  & \multicolumn{1}{r}{$1$} &  & 82.31 &  & 0.583 &  & 83.19 &  & 0.590\\
$95\%$ &  & \multicolumn{1}{r}{$-1$} &  & 91.13 &  & \textbf{0.669} &  &
\textbf{91.50} &  & \textbf{0.683}\\
&  & \multicolumn{1}{r}{$-\frac{1}{2}$} &  & \textbf{91.28} &  & 0.680 &  &
91.42 &  & 0.691\\
&  & \multicolumn{1}{r}{$0$} &  & 90.97 &  & 0.688 &  & 90.89 &  & 0.697\\
&  & \multicolumn{1}{r}{$\frac{2}{3}$} &  & 89.87 &  & 0.695 &  & 90.13 &  &
0.702\\
&  & \multicolumn{1}{r}{$1$} &  & 89.13 &  & 0.696 &  & 89.56 &  & 0.704\\
&  & \multicolumn{1}{r}{$\lambda$} &  & \multicolumn{7}{c}{$n=45$}\\
$90\%$ &  & \multicolumn{1}{r}{$-1$} &  & \textbf{86.25} &  & \textbf{0.473} &
& \textbf{86.50} &  & \textbf{0.479}\\
&  & \multicolumn{1}{r}{$-\frac{1}{2}$} &  & 85.96 &  & 0.479 &  & 85.90 &  &
0.483\\
&  & \multicolumn{1}{r}{$0$} &  & 85.14 &  & 0.484 &  & 85.57 &  & 0.488\\
&  & \multicolumn{1}{r}{$\frac{2}{3}$} &  & 84.05 &  & 0.490 &  & 84.30 &  &
0.493\\
&  & \multicolumn{1}{r}{$1$} &  & 83.15 &  & 0.493 &  & 83.56 &  & 0.496\\
$95\%$ &  & \multicolumn{1}{r}{$-1$} &  & \textbf{92.48} &  & \textbf{0.562} &
& \textbf{92.25} &  & \textbf{0.569}\\
&  & \multicolumn{1}{r}{$-\frac{1}{2}$} &  & 92.09 &  & 0.570 &  & 92.25 &  &
0.576\\
&  & \multicolumn{1}{r}{$0$} &  & 91.66 &  & 0.578 &  & 91.72 &  & 0.583\\
&  & \multicolumn{1}{r}{$\frac{2}{3}$} &  & 90.44 &  & 0.588 &  & 90.63 &  &
0.592\\
&  & \multicolumn{1}{r}{$1$} &  & 89.46 &  & 0.592 &  & 89.85 &  & 0.595\\
&  & \multicolumn{1}{r}{$\lambda$} &  & \multicolumn{7}{c}{$n=60$}\\
$90\%$ &  & \multicolumn{1}{r}{$-1$} &  & \textbf{85.22} &  & \textbf{0.416} &
& \textbf{85.24} &  & \textbf{0.419}\\
&  & \multicolumn{1}{r}{$-\frac{1}{2}$} &  & 84.61 &  & 0.420 &  & 84.67 &  &
0.423\\
&  & \multicolumn{1}{r}{$0$} &  & 83.95 &  & 0.425 &  & 83.91 &  & 0.427\\
&  & \multicolumn{1}{r}{$\frac{2}{3}$} &  & 82.49 &  & 0.431 &  & 82.66 &  &
0.432\\
&  & \multicolumn{1}{r}{$1$} &  & 81.70 &  & 0.434 &  & 82.03 &  & 0.435\\
$95\%$ &  & \multicolumn{1}{r}{$-1$} &  & \textbf{91.93} &  & \textbf{0.494} &
& \textbf{91.62} &  & \textbf{0.498}\\
&  & \multicolumn{1}{r}{$-\frac{1}{2}$} &  & 91.41 &  & 0.501 &  & 91.27 &  &
0.504\\
&  & \multicolumn{1}{r}{$0$} &  & 90.29 &  & 0.508 &  & 90.35 &  & 0.510\\
&  & \multicolumn{1}{r}{$\frac{2}{3}$} &  & 89.23 &  & 0.517 &  & 89.35 &  &
0.519\\
&  & \multicolumn{1}{r}{$1$} &  & 88.42 &  & 0.522 &  & 88.96 &  &
0.523\\\hline
\end{tabular}
\ $%
\caption{Coverage probability and average width of $90\%$ and $95\%$ confidence intervals with data contaminated by shifted observations.\label{t2b}}%
\end{table}%

\section{Testing for a composite null hypothesis\label{secComp}}

Now, let us consider the problem of testing the composite null hypothesis%
\begin{equation}
H_{0}:\boldsymbol{c}(\boldsymbol{\theta})\boldsymbol{=0}_{q}. \label{2.1.3}%
\end{equation}
The rest of this section, proceeds as follows. In Section \ref{secComp1}, we
introduce the empirical phi-divergence test statistic for the composite null
hypothesis in (\ref{2.1.3}). Section \ref{secComp2} is devoted to the
asymptotic results. In Section \ref{secComp3}, we present some results
regarding the power function of the family of empirical test statistics
proposed here. In Section \ref{secComp4}, a simulation study is carried out to
evaluate the performance of the proposed test procedure in comparison to some
other tests.

\subsection{Empirical phi-divergence test statistics\label{secComp1}}

In the following, we shall assume as in Qin and Lawless (1995), that the
unknown parameter vector $\boldsymbol{\theta}$ is defined through $r=p$
estimating functions, given in (\ref{estF}). With regard to (\ref{2.1.3}), we
shall assume that $\boldsymbol{c}:$ $\mathbb{R}^{p}\mathbb{\rightarrow R}^{q}$
is a vector-valued function such that the $q\times p$ matrix $\boldsymbol{C}%
$$\left(  \boldsymbol{\theta}\right)  =\frac{\partial\boldsymbol{c(\theta)}%
}{\partial\boldsymbol{\theta}^{T}}$ exists and is continuous at
$\boldsymbol{\theta}$ and that $\mathrm{rank}($$\boldsymbol{C}$$\left(
\boldsymbol{\theta}\right)  )=q$ ($q\leq p$). We denote by
$\widetilde{\boldsymbol{\theta}}_{E,n}$ the empirical restricted maximum
likelihood estimator of $\boldsymbol{\theta}$, obtained by minimizing
$\ell_{E,n}(\boldsymbol{\theta})=-\sum_{i=1}^{n}\log\left[  1+\boldsymbol{t}%
(\boldsymbol{\theta})^{T}\boldsymbol{g}(\boldsymbol{X}_{i},\boldsymbol{\theta
})\right]  $, subject to $\sum_{i=1}^{n}p_{i}\left(  \boldsymbol{\theta
}\right)  \boldsymbol{g}(\boldsymbol{X}_{i},\boldsymbol{\theta}%
)=\boldsymbol{0}_{r}$ and $\boldsymbol{c}$$\boldsymbol{(\theta)=0}_{q}$, i.e.,
(\ref{ec}) or%
\begin{equation}
\boldsymbol{\xi}_{1n}\left(  \boldsymbol{\theta},\boldsymbol{\boldsymbol{t}%
(\boldsymbol{\theta})},\boldsymbol{\nu}\right)  =\boldsymbol{0}_{p}\text{,}
\label{seq2}%
\end{equation}
where
\[
\boldsymbol{\xi}_{1n}\left(  \boldsymbol{\theta},\boldsymbol{\boldsymbol{t}%
(\boldsymbol{\theta})},\boldsymbol{\nu}\right)  =\boldsymbol{\xi}_{1n}\left(
\boldsymbol{\theta},\boldsymbol{\boldsymbol{t}(\boldsymbol{\theta})}\right)
=\frac{1}{n}\sum\limits_{i=1}^{n}\frac{1}{1+\boldsymbol{t}(\boldsymbol{\theta
})^{T}\boldsymbol{g}(\boldsymbol{X}_{i},\boldsymbol{\theta})}\boldsymbol{g}%
(\boldsymbol{X}_{i},\boldsymbol{\theta})
\]
provides $\boldsymbol{t}(\boldsymbol{\theta})$ in terms of $\boldsymbol{\theta
}$. Upon using the Lagrange multiplier method, once again, we have%
\[
\zeta_{E,n}(\boldsymbol{\theta},\boldsymbol{\boldsymbol{t}(\boldsymbol{\theta
})},\boldsymbol{\nu})=\frac{1}{n}\ell_{E,n}(\boldsymbol{\theta)}%
+\boldsymbol{\nu}^{T}\boldsymbol{c(\theta)},
\]
where $\boldsymbol{\nu}$ is a $q$-dimensional vector of Lagrange multipliers,
and differentiating $\zeta_{E,n}(\boldsymbol{\theta}%
,\boldsymbol{\boldsymbol{t}(\boldsymbol{\theta})},\boldsymbol{\nu})$ with
respect to $\boldsymbol{\theta}$ and $\boldsymbol{\nu}$, we obtain%
\begin{align*}
\boldsymbol{\xi}_{2n}\left(  \boldsymbol{\theta},\boldsymbol{\boldsymbol{t}%
(\boldsymbol{\theta})},\boldsymbol{\nu}\right)   &  =\frac{1}{n}%
\sum\limits_{i=1}^{n}\frac{1}{1+\boldsymbol{t}(\boldsymbol{\theta}%
)^{T}\boldsymbol{g}(\boldsymbol{X}_{i},\boldsymbol{\theta})}\boldsymbol{G}%
_{\boldsymbol{X}_{i}}(\boldsymbol{\theta})^{T}\boldsymbol{t}%
(\boldsymbol{\theta})+\mathbf{C}(\boldsymbol{\theta})\boldsymbol{^{T}\nu},\\
\boldsymbol{\xi}_{3n}\left(  \boldsymbol{\theta},\boldsymbol{\boldsymbol{t}%
(\boldsymbol{\theta})},\boldsymbol{\nu}\right)   &  =\boldsymbol{\xi}%
_{3n}\left(  \boldsymbol{\theta}\right)  =\boldsymbol{c(\theta)}.
\end{align*}
Therefore, $(\widetilde{\boldsymbol{\theta}}_{E,n}^{T}%
,\widetilde{\boldsymbol{\nu}}_{E,n}^{T})^{T}$ is obtained as the solution of
(\ref{seq2}) and
\begin{equation}
\boldsymbol{\xi}_{2n}\left(  \boldsymbol{\theta,\boldsymbol{t}%
(\boldsymbol{\theta}),\nu}\right)  =\boldsymbol{0}_{p}\text{,}%
\boldsymbol{\qquad\xi}_{3n}\left(  \boldsymbol{\theta,\boldsymbol{t}%
(\boldsymbol{\theta}),\nu}\right)  =\boldsymbol{0}_{q}\text{,} \label{seq1}%
\end{equation}
and conditions (\ref{cond}) and (\ref{empF2}) must be also satisfied.

The empirical likelihood ratio test for testing (\ref{2.1.3}) has the
expression
\begin{equation}
L_{E,n}(\widehat{\boldsymbol{\theta}}_{E,n},\widetilde{\boldsymbol{\theta}%
}_{E,n})=2\ell_{E,n}(\widehat{\boldsymbol{\theta}}_{E,n})-2\ell_{E,n}%
(\widetilde{\boldsymbol{\theta}}_{E,n}), \label{LRT2}%
\end{equation}
where $\widehat{\boldsymbol{\theta}}_{E,n}$ is the empirical maximum
likelihood estimator of the parameter $\boldsymbol{\theta}$, defined in
Section \ref{sec1}\ (case $r=p$), for which $\ell_{E,n}%
(\widehat{\boldsymbol{\theta}}_{E})=0$. Taking into account the results stated
in Section \ref{sec1} for $r=p$, we have $F_{n}%
=F_{n,\widehat{\boldsymbol{\theta}}_{E,n}}$, and so it is easy to show that
the expression in (\ref{LRT2}) can be written as
\[
L_{E,n}(\widehat{\boldsymbol{\theta}}_{E,n},\widetilde{\boldsymbol{\theta}%
}_{E,n})=2n\left(  D_{Kullback}(F_{n},F_{n,\widetilde{\boldsymbol{\theta}%
}_{E,n}})-D_{Kullback}(F_{n},F_{n,\widehat{\boldsymbol{\theta}}_{E,n}%
})\right)  =2nD_{Kullback}(F_{n,\widehat{\boldsymbol{\theta}}_{E,n}%
},F_{n,\widetilde{\boldsymbol{\theta}}_{E,n}}).
\]
Defining a family of empirical phi-divergence test statistics for testing the
hypothesis in (\ref{2.1.3}), we have%
\begin{align*}
T_{n}^{\phi}(\widehat{\boldsymbol{\theta}}_{E,n},\widetilde{\boldsymbol{\theta
}}_{E,n})  &  =\frac{2n}{\phi^{\prime\prime}(1)}\left(  D_{\phi}%
(F_{n},F_{n,\widetilde{\boldsymbol{\theta}}_{E,n}})-D_{\phi}(F_{n}%
,F_{n,\widehat{\boldsymbol{\theta}}_{E,n}})\right)  ,\\
S_{n}^{\phi}(\widehat{\boldsymbol{\theta}}_{E,n},\widetilde{\boldsymbol{\theta
}}_{E,n})  &  =\frac{2n}{\phi^{\prime\prime}(1)}D_{\phi}%
(F_{n,\widehat{\boldsymbol{\theta}}_{E,n}},F_{n,\widetilde{\boldsymbol{\theta
}}_{E,n}})
\end{align*}
where the $\phi$-divergence measures are as defined in (\ref{eqDiv}). Since
$F_{n}=F_{n,\widehat{\boldsymbol{\theta}}_{E,n}}$, both families of empirical
phi-divergence test statistics are equivalent and taking into account
(\ref{F2}) and $\boldsymbol{t}(\widehat{\boldsymbol{\theta}}_{E,n}%
)=\boldsymbol{0}_{p}$, we have%
\begin{align}
T_{n}^{\phi}(\widehat{\boldsymbol{\theta}}_{E,n},\widetilde{\boldsymbol{\theta
}}_{E,n})  &  =S_{n}^{\phi}(\widehat{\boldsymbol{\theta}}_{E,n}%
,\widetilde{\boldsymbol{\theta}}_{E,n})\nonumber\\
&  =\frac{2}{\phi^{\prime\prime}(1)}\sum\limits_{i=1}^{n}\frac{\phi\left(
1+\boldsymbol{t}(\widetilde{\boldsymbol{\theta}}_{E,n})^{T}\boldsymbol{g}%
(\boldsymbol{x}_{i},\widetilde{\boldsymbol{\theta}}_{E,n})\right)
}{1+\boldsymbol{t}(\widetilde{\boldsymbol{\theta}}_{E,n})^{T}\boldsymbol{g}%
(\boldsymbol{x}_{i},\widetilde{\boldsymbol{\theta}}_{E,n})}.
\label{phi-div-test}%
\end{align}

\subsection{Asymptotic null distributions\label{secComp2}}

Under the same conditions of Theorem \ref{Th1}\ and the conditions given in
Section \ref{secComp1} for $\boldsymbol{c}(\boldsymbol{\theta})$, it is
assumed that there exists a neighbourhood of $\boldsymbol{\theta}_{0}$ in
which $\left\Vert \frac{\partial\boldsymbol{C}(\boldsymbol{\theta})}%
{\partial\boldsymbol{\theta}}\right\Vert $ is bounded by some some constant,
we have
\begin{equation}
\sqrt{n}\left(
\begin{array}
[c]{c}%
\widetilde{\boldsymbol{\theta}}_{E,n}-\boldsymbol{\theta}_{0}\\
\widetilde{\boldsymbol{\nu}}_{E,n}%
\end{array}
\right)  \underset{n\rightarrow\infty}{\overset{\mathcal{L}}{\rightarrow}%
}\mathcal{N}\left(  \boldsymbol{0}_{p+q},\left(
\begin{array}
[c]{cc}%
\boldsymbol{P}\left(  \boldsymbol{\theta}_{0}\right)  & \boldsymbol{0}%
_{p\times q}\\
\boldsymbol{0}_{q\times p} & \boldsymbol{Q}\left(  \boldsymbol{\theta}%
_{0}\right)
\end{array}
\right)  \right)  , \label{2.5}%
\end{equation}
where $\boldsymbol{\theta}_{0}$ is the true value of the parameter
$\boldsymbol{\theta}$ and%
\begin{align}
\boldsymbol{P\left(  \boldsymbol{\theta}_{0}\right)  }  &
\boldsymbol{=V\left(  \boldsymbol{\theta}_{0}\right)  }-\boldsymbol{V\left(
\boldsymbol{\theta}_{0}\right)  C}\left(  \boldsymbol{\theta}_{0}\right)
^{T}\boldsymbol{Q\left(  \boldsymbol{\theta}_{0}\right)  C\left(
\boldsymbol{\theta}_{0}\right)  V\left(  \boldsymbol{\theta}_{0}\right)
},\label{P}\\
\boldsymbol{Q}\left(  \boldsymbol{\theta}_{0}\right)   &
=(\boldsymbol{C\left(  \boldsymbol{\theta}_{0}\right)  V\left(
\boldsymbol{\theta}_{0}\right)  C}\left(  \boldsymbol{\theta}_{0}\right)
^{T})^{-1}. \label{BM}%
\end{align}
This result is derived by taking into account, the facts that%
\begin{align}
\sqrt{n}(\widetilde{\boldsymbol{\theta}}_{E,n}-\boldsymbol{\theta}_{0})  &
=-\boldsymbol{S}_{12}^{-1}\left(  \boldsymbol{\theta}_{0}\right)  \sqrt
{n}\boldsymbol{\bar{g}}_{n}(\boldsymbol{\theta}_{0})-\boldsymbol{V}\left(
\boldsymbol{\theta}_{0}\right)  \boldsymbol{C}\left(  \boldsymbol{\theta}%
_{0}\right)  ^{T}\sqrt{n}\widetilde{\boldsymbol{\nu}}_{E,n}+o_{p}%
(1),\label{o1}\\
\sqrt{n}\widetilde{\boldsymbol{\nu}}_{E,n}  &  =-\boldsymbol{Q\left(
\boldsymbol{\theta}_{0}\right)  C\left(  \boldsymbol{\theta}_{0}\right)
S}_{12}^{-1}\left(  \boldsymbol{\theta}_{0}\right)  \sqrt{n}\boldsymbol{\bar
{g}}_{n}(\boldsymbol{\theta}_{0})+o_{p}(1). \label{o2}%
\end{align}
For a complete proof, one may refer to Qin and Lawless (1995).

\begin{theorem}
\label{Th2}Under $H_{0}$ in (\ref{2.1.3}) and the assumptions above, we have%
\begin{equation}
T_{n}^{\phi}(\widehat{\boldsymbol{\theta}}_{E,n},\widetilde{\boldsymbol{\theta
}}_{E,n})\overset{\mathcal{L}}{\underset{n\rightarrow\infty}{\longrightarrow}%
}\chi_{q}^{2}. \label{th2R}%
\end{equation}

\begin{proof}
Let us consider the function%
\[
\ell_{\phi}\left(  \boldsymbol{t}\right)  =\frac{1}{n}\sum\limits_{i=1}%
^{n}\frac{\phi(1+\boldsymbol{t}^{T}\boldsymbol{g}(\boldsymbol{x}%
_{i},\widetilde{\boldsymbol{\theta}}_{E,n}))}{1+\boldsymbol{t}^{T}%
\boldsymbol{g}(\boldsymbol{x}_{i},\widetilde{\boldsymbol{\theta}}_{E,n})}.
\]
A second-order Taylor expansion of $\ell_{\phi}(\widetilde{\boldsymbol{t}})$
around $\boldsymbol{0}_{p}$, with $\widetilde{\boldsymbol{t}}=\boldsymbol{t}%
(\widetilde{\boldsymbol{\theta}}_{E,n})$, gives%
\[
T_{n}^{\phi}(\widehat{\boldsymbol{\theta}}_{E,n},\widetilde{\boldsymbol{\theta
}}_{E,n})=\frac{2n\ell_{\phi}(\widetilde{\boldsymbol{t}})}{\phi^{\prime\prime
}\left(  1\right)  }=n\boldsymbol{t}(\widetilde{\boldsymbol{\theta}}%
_{E,n})^{T}\boldsymbol{S}_{11}(\widetilde{\boldsymbol{\theta}}_{E,n}%
)\boldsymbol{t}(\widetilde{\boldsymbol{\theta}}_{E,n})+o\left(  ||\sqrt
{n}\boldsymbol{t}(\widetilde{\boldsymbol{\theta}}_{E,n})||^{2}\right)  .
\]
But,
\[
\sqrt{n}\boldsymbol{t}(\widetilde{\boldsymbol{\theta}}_{E,n})=\sqrt
{n}\boldsymbol{S}_{11}^{-1}\left(  \boldsymbol{\theta}_{0}\right)
\boldsymbol{\bar{g}}_{n}(\widetilde{\boldsymbol{\theta}}_{E,n})+o_{p}%
(\boldsymbol{1}_{r}),
\]
and
\begin{align*}
\boldsymbol{\bar{g}}_{n}(\widetilde{\boldsymbol{\theta}}_{E,n})  &
=\boldsymbol{\bar{g}}_{n}(\boldsymbol{\theta}_{0})+\frac{\partial}%
{\partial\boldsymbol{\theta}^{T}}\left.  \boldsymbol{\bar{g}}_{n}\left(
\boldsymbol{\theta}\right)  \right\vert _{\boldsymbol{\theta}%
=\boldsymbol{\theta}_{0}}(\widetilde{\boldsymbol{\theta}}_{E,n}%
-\boldsymbol{\theta}_{0})+o_{p}(n^{-1/2}\boldsymbol{1}_{r})\\
&  =\boldsymbol{\bar{g}}_{n}(\boldsymbol{\theta}_{0})+\boldsymbol{S}%
_{12}\left(  \boldsymbol{\theta}_{0}\right)  (\widetilde{\boldsymbol{\theta}%
}_{E,n}-\boldsymbol{\theta}_{0})+o_{p}(n^{-1/2}\boldsymbol{1}_{r})\\
&  =\boldsymbol{S}_{12}\left(  \boldsymbol{\theta}_{0}\right)  \boldsymbol{V}%
\left(  \boldsymbol{\theta}_{0}\right)  \boldsymbol{C}\left(
\boldsymbol{\theta}_{0}\right)  ^{T}\widetilde{\boldsymbol{\nu}}_{E,n}%
+o_{p}(n^{-1/2}\boldsymbol{1}_{r}).
\end{align*}
By the strong law of large numbers, $\frac{\partial}{\partial
\boldsymbol{\theta}^{T}}\left.  \boldsymbol{\bar{g}}_{n}\left(
\boldsymbol{\theta}\right)  \right\vert _{\boldsymbol{\theta}%
=\boldsymbol{\theta}_{0}}\overset{a.s.}{\underset{n\rightarrow\infty
}{\longrightarrow}}\boldsymbol{S}_{12}\left(  \boldsymbol{\theta}_{0}\right)
$, and so (\ref{o1}) can be rewritten as%
\[
\boldsymbol{S}_{12}\left(  \boldsymbol{\theta}_{0}\right)
(\widetilde{\boldsymbol{\theta}}_{E,n}-\boldsymbol{\theta}_{0}%
)=-\boldsymbol{\bar{g}}_{n}(\boldsymbol{\theta}_{0})-\boldsymbol{S}%
_{12}\left(  \boldsymbol{\theta}_{0}\right)  \boldsymbol{V}\left(
\boldsymbol{\theta}_{0}\right)  \boldsymbol{C}\left(  \boldsymbol{\theta}%
_{0}\right)  ^{T}\widetilde{\boldsymbol{\nu}}_{E,n}+o_{p}(n^{-1/2}%
\boldsymbol{1}_{r}).
\]
Consequently,%
\begin{equation}
T_{n}^{\phi}(\widehat{\boldsymbol{\theta}}_{E,n},\widetilde{\boldsymbol{\theta
}}_{E,n})=\left(  \sqrt{n}\widetilde{\boldsymbol{\nu}}_{E,n}\right)
^{T}\boldsymbol{Q}^{-1}\left(  \boldsymbol{\theta}_{0}\right)  \sqrt
{n}\widetilde{\boldsymbol{\nu}}_{E,n}+o_{p}(1), \label{est2}%
\end{equation}
with $\sqrt{n}\widetilde{\boldsymbol{\nu}}_{E,n}\underset{n\rightarrow
\infty}{\overset{\mathcal{L}}{\rightarrow}}\mathcal{N}\left(  \boldsymbol{0}%
_{q},\boldsymbol{Q}\left(  \boldsymbol{\theta}_{0}\right)  \right)  $, which
means that (\ref{th2R}) holds.
\end{proof}
\end{theorem}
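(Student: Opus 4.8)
The plan is to exploit the fact that here $r=p$, so $\boldsymbol{t}(\widehat{\boldsymbol{\theta}}_{E,n})=\boldsymbol{0}_p$ and $F_n=F_{n,\widehat{\boldsymbol{\theta}}_{E,n}}$, whence the two-argument statistic collapses to the single-argument divergence recorded in (\ref{phi-div-test}), namely $T_n^{\phi}(\widehat{\boldsymbol{\theta}}_{E,n},\widetilde{\boldsymbol{\theta}}_{E,n})=\frac{2n}{\phi''(1)}\ell_\phi(\widetilde{\boldsymbol{t}})$ with $\widetilde{\boldsymbol{t}}=\boldsymbol{t}(\widetilde{\boldsymbol{\theta}}_{E,n})$ and $\ell_\phi(\boldsymbol{t})=\frac1n\sum_i \phi(1+\boldsymbol{t}^T\boldsymbol{g}_i)/(1+\boldsymbol{t}^T\boldsymbol{g}_i)$. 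Since $\phi\in\Phi$ forces $\phi(1)=\phi'(1)=0$ and $\phi''(1)>0$, the function $u\mapsto \phi(1+u)/(1+u)$ has leading behaviour $\tfrac12\phi''(1)u^2$ near $u=0$. First I would therefore Taylor-expand $\ell_\phi(\widetilde{\boldsymbol{t}})$ about $\boldsymbol{0}_p$ to second order, obtaining the quadratic approximation $T_n^{\phi}=n\,\widetilde{\boldsymbol{t}}^{\,T}\boldsymbol{S}_{11}(\boldsymbol{\theta}_0)\widetilde{\boldsymbol{t}}+o_p(1)$ (replacing $\boldsymbol{S}_{11}(\widetilde{\boldsymbol{\theta}}_{E,n})$ by $\boldsymbol{S}_{11}(\boldsymbol{\theta}_0)$ by consistency). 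This reduces everything to finding the asymptotic law of $\sqrt{n}\,\widetilde{\boldsymbol{t}}$.

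Next I would linearize $\sqrt{n}\,\widetilde{\boldsymbol{t}}$ from the defining equation (\ref{ec}) evaluated at $\widetilde{\boldsymbol{\theta}}_{E,n}$: expanding $(1+\widetilde{\boldsymbol{t}}^{\,T}\boldsymbol{g}_i)^{-1}\approx 1-\widetilde{\boldsymbol{t}}^{\,T}\boldsymbol{g}_i$ yields $\sqrt{n}\,\widetilde{\boldsymbol{t}}=\boldsymbol{S}_{11}^{-1}(\boldsymbol{\theta}_0)\sqrt{n}\,\bar{\boldsymbol{g}}_n(\widetilde{\boldsymbol{\theta}}_{E,n})+o_p(\boldsymbol{1}_r)$. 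Then I would Taylor-expand $\bar{\boldsymbol{g}}_n(\widetilde{\boldsymbol{\theta}}_{E,n})$ around $\boldsymbol{\theta}_0$, using the strong law to replace $\partial\bar{\boldsymbol{g}}_n/\partial\boldsymbol{\theta}^T$ by $\boldsymbol{S}_{12}(\boldsymbol{\theta}_0)$. Invoking the Qin--Lawless expansion (\ref{o1}), the combination $\bar{\boldsymbol{g}}_n(\boldsymbol{\theta}_0)+\boldsymbol{S}_{12}(\boldsymbol{\theta}_0)(\widetilde{\boldsymbol{\theta}}_{E,n}-\boldsymbol{\theta}_0)$ collapses to a multiple of $\boldsymbol{S}_{12}(\boldsymbol{\theta}_0)\boldsymbol{V}(\boldsymbol{\theta}_0)\boldsymbol{C}(\boldsymbol{\theta}_0)^T\widetilde{\boldsymbol{\nu}}_{E,n}$ (the $\bar{\boldsymbol{g}}_n(\boldsymbol{\theta}_0)$ piece cancels), giving $\sqrt{n}\,\widetilde{\boldsymbol{t}}=\boldsymbol{S}_{11}^{-1}(\boldsymbol{\theta}_0)\boldsymbol{S}_{12}(\boldsymbol{\theta}_0)\boldsymbol{V}(\boldsymbol{\theta}_0)\boldsymbol{C}(\boldsymbol{\theta}_0)^T\sqrt{n}\,\widetilde{\boldsymbol{\nu}}_{E,n}+o_p(\boldsymbol{1}_r)$.

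Substituting into the quadratic form and using $\boldsymbol{S}_{12}^T\boldsymbol{S}_{11}^{-1}\boldsymbol{S}_{12}=\boldsymbol{V}^{-1}$ (immediate from (\ref{V})) together with $\boldsymbol{C}\boldsymbol{V}\boldsymbol{C}^T=\boldsymbol{Q}^{-1}$ (from (\ref{BM})), the inner factors telescope, leaving $T_n^{\phi}(\widehat{\boldsymbol{\theta}}_{E,n},\widetilde{\boldsymbol{\theta}}_{E,n})=(\sqrt{n}\,\widetilde{\boldsymbol{\nu}}_{E,n})^T\boldsymbol{Q}^{-1}(\boldsymbol{\theta}_0)(\sqrt{n}\,\widetilde{\boldsymbol{\nu}}_{E,n})+o_p(1)$. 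Since (\ref{2.5}) gives $\sqrt{n}\,\widetilde{\boldsymbol{\nu}}_{E,n}\overset{\mathcal{L}}{\to}\mathcal{N}(\boldsymbol{0}_q,\boldsymbol{Q}(\boldsymbol{\theta}_0))$, the standard fact that $\boldsymbol{Z}^T\boldsymbol{\Sigma}^{-1}\boldsymbol{Z}\sim\chi^2$ whenever $\boldsymbol{Z}\sim\mathcal{N}(\boldsymbol{0},\boldsymbol{\Sigma})$ yields the claimed $\chi_q^2$ limit.

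I expect the main obstacle to be the careful bookkeeping of the remainder terms rather than the algebra: ensuring the second-order Taylor remainder of $\ell_\phi$ is genuinely $o(\|\sqrt{n}\,\widetilde{\boldsymbol{t}}\|^2)=o_p(1)$, and that the two first-order linearizations (of $\widetilde{\boldsymbol{t}}$ and of $\bar{\boldsymbol{g}}_n(\widetilde{\boldsymbol{\theta}}_{E,n})$) hold with $o_p$ errors. This is controlled precisely by the boundedness hypotheses i)--iv) — in particular the cube-integrability in ii) dominates the third-order terms uniformly — supplemented by the added bound on $\partial\boldsymbol{C}/\partial\boldsymbol{\theta}$ needed for (\ref{o1})--(\ref{o2}). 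Once those remainders are justified, the remaining steps are the matrix identities above.
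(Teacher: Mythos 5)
Your proposal is correct and follows essentially the same route as the paper's own proof: the same second-order expansion of $\ell_{\phi}$ at $\boldsymbol{t}=\boldsymbol{0}_{p}$ giving $T_{n}^{\phi}=n\,\widetilde{\boldsymbol{t}}^{T}\boldsymbol{S}_{11}\widetilde{\boldsymbol{t}}+o_{p}(1)$, the same linearization $\sqrt{n}\,\widetilde{\boldsymbol{t}}=\boldsymbol{S}_{11}^{-1}\left(\boldsymbol{\theta}_{0}\right)\sqrt{n}\,\boldsymbol{\bar{g}}_{n}(\widetilde{\boldsymbol{\theta}}_{E,n})+o_{p}(\boldsymbol{1}_{r})$ combined with (\ref{o1}) so that the $\boldsymbol{\bar{g}}_{n}(\boldsymbol{\theta}_{0})$ term cancels, and the same reduction to the quadratic form $\left(\sqrt{n}\,\widetilde{\boldsymbol{\nu}}_{E,n}\right)^{T}\boldsymbol{Q}^{-1}\left(\boldsymbol{\theta}_{0}\right)\sqrt{n}\,\widetilde{\boldsymbol{\nu}}_{E,n}+o_{p}(1)$ whose limit is $\chi_{q}^{2}$ by (\ref{2.5}). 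The only differences are cosmetic: you spell out the telescoping identities $\boldsymbol{S}_{12}^{T}\boldsymbol{S}_{11}^{-1}\boldsymbol{S}_{12}=\boldsymbol{V}^{-1}$ and $\boldsymbol{C}\boldsymbol{V}\boldsymbol{C}^{T}=\boldsymbol{Q}^{-1}$, and the consistency replacement of $\boldsymbol{S}_{11}(\widetilde{\boldsymbol{\theta}}_{E,n})$ by $\boldsymbol{S}_{11}(\boldsymbol{\theta}_{0})$, which the paper leaves implicit.
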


\subsection{Approximations of the power function\label{secComp3}}

Assume that $\boldsymbol{\theta}^{\ast}\notin\Theta_{0}$ is the true value of
the unknown parameter so that $\widehat{\boldsymbol{\theta}}_{E,n}%
\underset{n\rightarrow\infty}{\overset{a.s.}{\rightarrow}}\boldsymbol{\theta
}^{\ast}$, and that there exists a $\boldsymbol{\theta}_{0}\in\Theta_{0}$ such
that the restricted empirical maximum likelihood estimator satisfies
$\widetilde{\boldsymbol{\theta}}_{E,n}\underset{n\rightarrow\infty
}{\overset{a.s.}{\rightarrow}}\boldsymbol{\theta}_{0}$. Then, we have
\begin{equation}
\sqrt{n}\left(  \left(
\begin{array}
[c]{c}%
\widehat{\boldsymbol{\theta}}_{E,n}\\
\widetilde{\boldsymbol{\theta}}_{E,n}%
\end{array}
\right)  -\left(
\begin{array}
[c]{c}%
\boldsymbol{\theta}^{\ast}\\
\boldsymbol{\theta}_{0}%
\end{array}
\right)  \right)  \underset{n\rightarrow\infty}{\overset{\mathcal{L}%
}{\rightarrow}}\mathcal{N}\left(  \left(
\begin{array}
[c]{c}%
\boldsymbol{0}_{p}\\
\boldsymbol{0}_{p}%
\end{array}
\right)  ,\left(
\begin{array}
[c]{cc}%
\boldsymbol{V}\left(  \boldsymbol{\theta}_{0}\right)  & \boldsymbol{A}%
_{12}\left(  \boldsymbol{\theta}^{\ast},\boldsymbol{\theta}_{0}\right) \\
\boldsymbol{A}_{12}\left(  \boldsymbol{\theta}^{\ast},\boldsymbol{\theta}%
_{0}\right)  ^{T} & \boldsymbol{P(\theta}_{0})
\end{array}
\right)  \right)  , \label{A2}%
\end{equation}
where $\boldsymbol{A}_{12}\left(  \boldsymbol{\theta}^{\ast}%
,\boldsymbol{\theta}_{0}\right)  $ is\ a $q\times q$ matrix.

\begin{theorem}
\label{ThTh}Under the conditions stated above, we have
\[
\sqrt{n}\left(  D_{\phi}(\widehat{\boldsymbol{\theta}}_{E,n}%
,\widetilde{\boldsymbol{\theta}}_{E,n})-D_{\phi}(\boldsymbol{\theta}^{\ast
},\boldsymbol{\theta}_{0})\right)  \underset{n\rightarrow\infty
}{\overset{\mathcal{L}}{\rightarrow}}\mathcal{N}\left(  0,\varsigma_{\phi}%
^{2}(\boldsymbol{\theta}^{\ast},\boldsymbol{\theta}_{0})\right)  ,
\]
where
\[
\varsigma_{\phi}^{2}\left(  \boldsymbol{\theta}^{\ast},\boldsymbol{\theta}%
_{0}\right)  =\boldsymbol{\tau}_{\phi}^{T}\boldsymbol{\left(
\boldsymbol{\theta}^{\ast},\boldsymbol{\theta}_{0}\right)  V}\left(
\boldsymbol{\theta}_{0}\right)  \boldsymbol{\tau}_{\phi}\boldsymbol{\left(
\boldsymbol{\theta}^{\ast},\boldsymbol{\theta}_{0}\right)  +}%
2\boldsymbol{\boldsymbol{\tau}_{\phi}\boldsymbol{\left(  \boldsymbol{\theta
}^{\ast},\boldsymbol{\theta}_{0}\right)  }}^{T}\boldsymbol{A}_{12}\left(
\boldsymbol{\theta}^{\ast},\boldsymbol{\theta}_{0}\right)  \boldsymbol{\iota
}_{\phi}\boldsymbol{\left(  \boldsymbol{\theta}^{\ast},\boldsymbol{\theta}%
_{0}\right)  }+\boldsymbol{\iota}_{\phi}\boldsymbol{\left(  \boldsymbol{\theta
}^{\ast},\boldsymbol{\theta}_{0}\right)  }^{T}\boldsymbol{P(\theta}%
_{0})\boldsymbol{\iota}_{\phi}\boldsymbol{\left(  \boldsymbol{\theta}^{\ast
},\boldsymbol{\theta}_{0}\right)  ,}%
\]
with $\boldsymbol{\tau}_{\phi}\boldsymbol{\left(  \boldsymbol{\theta}^{\ast
},\boldsymbol{\theta}_{0}\right)  }$ given by (\ref{tao}) and%
\[
\boldsymbol{\iota}_{\phi}\boldsymbol{\left(  \boldsymbol{\theta}^{\ast
},\boldsymbol{\theta}_{0}\right)  =}(\iota_{1}^{\phi}\left(
\boldsymbol{\theta}^{\ast},\boldsymbol{\boldsymbol{\theta}_{0}}\right)
,...,\iota_{p}^{\phi}\left(  \boldsymbol{\theta}^{\ast}%
,\boldsymbol{\boldsymbol{\theta}_{0}}\right)  )^{T}=\left.  \frac{\partial
D_{\phi}\left(  \boldsymbol{\theta}^{\ast},\boldsymbol{\vartheta}\right)
}{\partial\boldsymbol{\vartheta}}\right\vert _{\boldsymbol{\vartheta
}=\boldsymbol{\theta}_{0}}.
\]

\end{theorem}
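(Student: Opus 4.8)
The plan is to follow the argument of Theorem \ref{th5}, now regarding $D_{\phi}(\widehat{\boldsymbol{\theta}}_{E,n},\widetilde{\boldsymbol{\theta}}_{E,n})=D_{\phi}(F_{n,\widehat{\boldsymbol{\theta}}_{E,n}},F_{n,\widetilde{\boldsymbol{\theta}}_{E,n}})$ as a function of \emph{two} parameter arguments and performing a joint first-order Taylor expansion about the limiting point $(\boldsymbol{\theta}^{\ast},\boldsymbol{\theta}_{0})$. Writing $D_{\phi}(\boldsymbol{\vartheta}_{1},\boldsymbol{\vartheta}_{2})$ for $D_{\phi}(F_{n,\boldsymbol{\vartheta}_{1}},F_{n,\boldsymbol{\vartheta}_{2}})$, the expansion reads
\[
D_{\phi}(\widehat{\boldsymbol{\theta}}_{E,n},\widetilde{\boldsymbol{\theta}}_{E,n})=D_{\phi}(\boldsymbol{\theta}^{\ast},\boldsymbol{\theta}_{0})+\boldsymbol{\tau}_{\phi}(\boldsymbol{\theta}^{\ast},\boldsymbol{\theta}_{0})^{T}(\widehat{\boldsymbol{\theta}}_{E,n}-\boldsymbol{\theta}^{\ast})+\boldsymbol{\iota}_{\phi}(\boldsymbol{\theta}^{\ast},\boldsymbol{\theta}_{0})^{T}(\widetilde{\boldsymbol{\theta}}_{E,n}-\boldsymbol{\theta}_{0})+o(\Vert\widehat{\boldsymbol{\theta}}_{E,n}-\boldsymbol{\theta}^{\ast}\Vert)+o(\Vert\widetilde{\boldsymbol{\theta}}_{E,n}-\boldsymbol{\theta}_{0}\Vert),
\]
where $\boldsymbol{\tau}_{\phi}$ is the gradient with respect to the first argument (given in (\ref{tao})) and $\boldsymbol{\iota}_{\phi}$ the gradient with respect to the second argument (as in the statement).

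Next I would multiply by $\sqrt{n}$ and dispose of the remainder. The consistency hypotheses, together with the joint limit (\ref{A2}), give $\widehat{\boldsymbol{\theta}}_{E,n}-\boldsymbol{\theta}^{\ast}=O_{p}(n^{-1/2})$ and $\widetilde{\boldsymbol{\theta}}_{E,n}-\boldsymbol{\theta}_{0}=O_{p}(n^{-1/2})$, so each $o(\Vert\cdot\Vert)$ term is $o_{p}(n^{-1/2})$ and $\sqrt{n}$ times it is $o_{p}(1)$, exactly as in Theorem \ref{th5}. Hence
\[
\sqrt{n}\left(D_{\phi}(\widehat{\boldsymbol{\theta}}_{E,n},\widetilde{\boldsymbol{\theta}}_{E,n})-D_{\phi}(\boldsymbol{\theta}^{\ast},\boldsymbol{\theta}_{0})\right)=\boldsymbol{\tau}_{\phi}^{T}\sqrt{n}(\widehat{\boldsymbol{\theta}}_{E,n}-\boldsymbol{\theta}^{\ast})+\boldsymbol{\iota}_{\phi}^{T}\sqrt{n}(\widetilde{\boldsymbol{\theta}}_{E,n}-\boldsymbol{\theta}_{0})+o_{p}(1),
\]
so the left-hand side shares the asymptotic law of a fixed linear functional $\boldsymbol{a}^{T}\boldsymbol{Z}_{n}$ of the stacked vector $\boldsymbol{Z}_{n}=\sqrt{n}((\widehat{\boldsymbol{\theta}}_{E,n}^{T},\widetilde{\boldsymbol{\theta}}_{E,n}^{T})^{T}-(\boldsymbol{\theta}^{\ast T},\boldsymbol{\theta}_{0}^{T})^{T})$, with $\boldsymbol{a}=(\boldsymbol{\tau}_{\phi}^{T},\boldsymbol{\iota}_{\phi}^{T})^{T}$.

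Finally I would invoke (\ref{A2}): $\boldsymbol{Z}_{n}$ converges to a centred Gaussian vector with the block covariance matrix whose diagonal blocks are $\boldsymbol{V}(\boldsymbol{\theta}_{0})$ and $\boldsymbol{P}(\boldsymbol{\theta}_{0})$ and whose off-diagonal block is $\boldsymbol{A}_{12}(\boldsymbol{\theta}^{\ast},\boldsymbol{\theta}_{0})$. A deterministic linear functional of a $\mathcal{N}(\boldsymbol{0},\boldsymbol{\Sigma})$ vector is univariate normal with mean $0$ and variance $\boldsymbol{a}^{T}\boldsymbol{\Sigma}\boldsymbol{a}$; expanding this quadratic form block-wise yields $\boldsymbol{\tau}_{\phi}^{T}\boldsymbol{V}(\boldsymbol{\theta}_{0})\boldsymbol{\tau}_{\phi}+\boldsymbol{\tau}_{\phi}^{T}\boldsymbol{A}_{12}\boldsymbol{\iota}_{\phi}+\boldsymbol{\iota}_{\phi}^{T}\boldsymbol{A}_{12}^{T}\boldsymbol{\tau}_{\phi}+\boldsymbol{\iota}_{\phi}^{T}\boldsymbol{P}(\boldsymbol{\theta}_{0})\boldsymbol{\iota}_{\phi}$, and since the two scalar cross terms are equal they combine into $2\boldsymbol{\tau}_{\phi}^{T}\boldsymbol{A}_{12}\boldsymbol{\iota}_{\phi}$, reproducing $\varsigma_{\phi}^{2}(\boldsymbol{\theta}^{\ast},\boldsymbol{\theta}_{0})$ exactly.

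The genuinely new feature, compared with Theorem \ref{th5}, is that two \emph{mutually dependent} estimators appear: the unrestricted $\widehat{\boldsymbol{\theta}}_{E,n}$ and the restricted $\widetilde{\boldsymbol{\theta}}_{E,n}$. Because their asymptotic covariance $\boldsymbol{A}_{12}$ does not vanish, the two Taylor contributions cannot be treated separately, and the cross term $2\boldsymbol{\tau}_{\phi}^{T}\boldsymbol{A}_{12}\boldsymbol{\iota}_{\phi}$ is the main point requiring the full joint normality (\ref{A2}) rather than the two marginal laws alone. A secondary technical point, handled the same informal way as in Theorem \ref{th5}, is that $\boldsymbol{\tau}_{\phi}$ and $\boldsymbol{\iota}_{\phi}$ are themselves data-dependent, being derivatives of $D_{\phi}(F_{n,\cdot},F_{n,\cdot})$; one treats them as their almost-sure deterministic limits at $(\boldsymbol{\theta}^{\ast},\boldsymbol{\theta}_{0})$ so that they may be pulled through the limiting law, which is justified by the assumed consistency together with Slutsky's theorem.
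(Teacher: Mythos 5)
Your proof is correct, and since the paper itself states Theorem \ref{ThTh} without supplying a proof, your argument fills the gap in exactly the way the surrounding text intends: a first-order Taylor expansion of $D_{\phi}$ jointly in both arguments about $(\boldsymbol{\theta}^{\ast},\boldsymbol{\theta}_{0})$, removal of the remainder via the $\sqrt{n}$-consistency of both estimators, and then an application of the joint normality (\ref{A2}) to the linear functional with coefficient vector $(\boldsymbol{\tau}_{\phi}^{T},\boldsymbol{\iota}_{\phi}^{T})^{T}$, the block quadratic form collapsing its two equal scalar cross terms into $2\boldsymbol{\tau}_{\phi}^{T}\boldsymbol{A}_{12}\boldsymbol{\iota}_{\phi}$ and reproducing $\varsigma_{\phi}^{2}(\boldsymbol{\theta}^{\ast},\boldsymbol{\theta}_{0})$ exactly, which is the direct extension of the paper's own proof of Theorem \ref{th5}. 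Your two closing observations (that the non-vanishing covariance $\boldsymbol{A}_{12}$ is what forces use of the joint rather than marginal limits, and that the data-dependence of $\boldsymbol{\tau}_{\phi}$ and $\boldsymbol{\iota}_{\phi}$ is handled by consistency plus Slutsky) are at the same level of rigor as the paper's treatment of Theorem \ref{th5}, so nothing further is required.
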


\begin{remark}
Based on Theorem \ref{ThTh}, we get an approximation of the power function at
$\boldsymbol{\theta}^{\ast}$ as
\begin{align*}
\beta_{n,\phi}^{1}(\boldsymbol{\theta}^{\ast},\boldsymbol{\boldsymbol{\theta
}_{0}})  &  =P_{\boldsymbol{\theta}^{\ast}}(T_{n}^{\phi}%
(\widehat{\boldsymbol{\theta}}_{E,n},\widetilde{\boldsymbol{\theta}}%
_{E,n})>\chi_{q,\alpha}^{2})\\
&  =1-\Phi\left(  \frac{\sqrt{n}}{\varsigma_{\phi}\left(  \boldsymbol{\theta
}^{\ast},\boldsymbol{\theta}_{0}\right)  }\left(  \frac{\phi^{\prime\prime
}\left(  1\right)  }{2n}\chi_{q,\alpha}^{2}-D_{\phi}\left(
F_{n,\boldsymbol{\theta}^{\ast}},F_{n,\boldsymbol{\theta}_{0}}\right)
\right)  \right)  .
\end{align*}
If some alternative $\boldsymbol{\theta}^{\ast}\neq\boldsymbol{\theta}_{0}$ is
the true parameter, then the probability of rejecting $\boldsymbol{\theta}%
_{0}$ with the rejection rule $T_{n}^{\phi}(\widehat{\boldsymbol{\theta}%
}_{E,n},\widetilde{\boldsymbol{\theta}}_{E,n})>\chi_{q,\alpha}^{2}$, for a
fixed significance level $\alpha$, tends to one as $n\rightarrow\infty.$ Thus,
the test is consistent in the sense of Fraser (1957).
\end{remark}

We may also find an approximation to the power of $T_{n}^{\phi}%
(\widehat{\boldsymbol{\theta}}_{E,n},\widetilde{\boldsymbol{\theta}}_{E,n})$
at an alternative hypothesis close to the null hypothesis. Let
$\boldsymbol{\theta}_{n}\in\Theta-\Theta_{0}$ be a given alternative, and let
$\boldsymbol{\theta}_{0}$ be the element in $\Theta_{0}$ closest to
$\boldsymbol{\theta}_{n}$ in terms of Euclidean distance. In order to
introduce contiguous alternative hypotheses, we may consider a fixed
$\boldsymbol{f}$ $\in\mathbb{R}^{p}$ and allow $\boldsymbol{\theta}_{n}$ to
tend to $\boldsymbol{\theta}_{0}$ as $n$ increases in the following manner:
\begin{equation}
H_{1,n}:\boldsymbol{\theta}_{n}=\boldsymbol{\theta}_{0}+n^{-1/2}%
\boldsymbol{f.} \label{C1}%
\end{equation}

\begin{theorem}
\label{Th2B}Under the assumptions of Theorem \ref{Th2} and $H_{1,n}$ given in
(\ref{C1}), we have%
\begin{equation}
T_{n}^{\phi}(\widehat{\boldsymbol{\theta}}_{E,n},\widetilde{\boldsymbol{\theta
}}_{E,n})\overset{\mathcal{L}}{\underset{n\rightarrow\infty}{\longrightarrow}%
}\chi_{q}^{2}(\varrho(\boldsymbol{\theta}_{0})), \label{NC2}%
\end{equation}
where $\varrho(\boldsymbol{\theta}_{0})=\boldsymbol{f}^{T}\boldsymbol{C\left(
\boldsymbol{\theta}_{0}\right)  }^{T}\boldsymbol{Q\left(  \boldsymbol{\theta
}_{0}\right)  C\left(  \boldsymbol{\theta}_{0}\right)  f}$.
\end{theorem}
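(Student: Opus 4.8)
The plan is to follow the architecture of the proof of Theorem \ref{Th2}, keeping its quadratic-form representation of the statistic intact and only recomputing the limiting law of the Lagrange multiplier under the drift. The natural starting point is the expansion (\ref{est2}),
\[
T_{n}^{\phi}(\widehat{\boldsymbol{\theta}}_{E,n},\widetilde{\boldsymbol{\theta}}_{E,n})=\left(\sqrt{n}\widetilde{\boldsymbol{\nu}}_{E,n}\right)^{T}\boldsymbol{Q}^{-1}\left(\boldsymbol{\theta}_{0}\right)\sqrt{n}\widetilde{\boldsymbol{\nu}}_{E,n}+o_{p}(1),
\]
which was obtained from a second-order Taylor argument that uses only the consistency of $\widetilde{\boldsymbol{\theta}}_{E,n}$ for $\boldsymbol{\theta}_{0}$, not the location of the true parameter. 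Since under $H_{1,n}$ in (\ref{C1}) the true value $\boldsymbol{\theta}_{n}=\boldsymbol{\theta}_{0}+n^{-1/2}\boldsymbol{f}$ still tends to $\boldsymbol{\theta}_{0}\in\Theta_{0}$, and since the sequence of local alternatives is contiguous to the null, I would first argue that both (\ref{est2}) and the linearization (\ref{o2}) persist with $o_{p}(1)$ remainders under $H_{1,n}$. The whole problem then reduces to identifying the asymptotic distribution of $\sqrt{n}\widetilde{\boldsymbol{\nu}}_{E,n}$ under the drift.

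Next I would recompute the law of $\sqrt{n}\bar{\boldsymbol{g}}_{n}(\boldsymbol{\theta}_{0})$, the only random input appearing in (\ref{o2}). The essential new feature is that the data are generated at $\boldsymbol{\theta}_{n}$ rather than at $\boldsymbol{\theta}_{0}$, so $\bar{\boldsymbol{g}}_{n}(\boldsymbol{\theta}_{0})$ is no longer centered. A first-order expansion of $E_{\boldsymbol{\theta}_{n}}[\boldsymbol{g}(\boldsymbol{X},\boldsymbol{\theta}_{0})]$ about $\boldsymbol{\theta}_{n}$, combined with the unbiasedness $E_{\boldsymbol{\theta}_{n}}[\boldsymbol{g}(\boldsymbol{X},\boldsymbol{\theta}_{n})]=\boldsymbol{0}_{r}$ and $\boldsymbol{\theta}_{0}-\boldsymbol{\theta}_{n}=-n^{-1/2}\boldsymbol{f}$, yields the mean shift $\sqrt{n}\,E_{\boldsymbol{\theta}_{n}}[\boldsymbol{g}(\boldsymbol{X},\boldsymbol{\theta}_{0})]\to-\boldsymbol{S}_{12}(\boldsymbol{\theta}_{0})\boldsymbol{f}$, while the dispersion is unaffected, so that $\sqrt{n}\bar{\boldsymbol{g}}_{n}(\boldsymbol{\theta}_{0})\overset{\mathcal{L}}{\longrightarrow}\mathcal{N}(-\boldsymbol{S}_{12}(\boldsymbol{\theta}_{0})\boldsymbol{f},\boldsymbol{S}_{11}(\boldsymbol{\theta}_{0}))$. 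Inserting this into (\ref{o2}) and using $\boldsymbol{Q}\boldsymbol{C}\boldsymbol{S}_{12}^{-1}\boldsymbol{S}_{12}=\boldsymbol{Q}\boldsymbol{C}$ for the mean and the identity $\boldsymbol{Q}\boldsymbol{C}\boldsymbol{V}\boldsymbol{C}^{T}\boldsymbol{Q}=\boldsymbol{Q}$ (from $\boldsymbol{Q}=(\boldsymbol{C}\boldsymbol{V}\boldsymbol{C}^{T})^{-1}$ and $\boldsymbol{V}=\boldsymbol{S}_{12}^{-1}\boldsymbol{S}_{11}\boldsymbol{S}_{12}^{-T}$, valid in the present $r=p$ setting) for the variance, I obtain
\[
\sqrt{n}\widetilde{\boldsymbol{\nu}}_{E,n}\overset{\mathcal{L}}{\longrightarrow}\mathcal{N}\left(\boldsymbol{Q}(\boldsymbol{\theta}_{0})\boldsymbol{C}(\boldsymbol{\theta}_{0})\boldsymbol{f},\boldsymbol{Q}(\boldsymbol{\theta}_{0})\right).
\]

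Finally I would close by the standard quadratic-form argument: setting $\boldsymbol{W}_{n}=\boldsymbol{Q}^{-1/2}(\boldsymbol{\theta}_{0})\sqrt{n}\widetilde{\boldsymbol{\nu}}_{E,n}\overset{\mathcal{L}}{\longrightarrow}\mathcal{N}(\boldsymbol{Q}^{1/2}(\boldsymbol{\theta}_{0})\boldsymbol{C}(\boldsymbol{\theta}_{0})\boldsymbol{f},\boldsymbol{I}_{q})$, the representation (\ref{est2}) becomes $T_{n}^{\phi}(\widehat{\boldsymbol{\theta}}_{E,n},\widetilde{\boldsymbol{\theta}}_{E,n})=\boldsymbol{W}_{n}^{T}\boldsymbol{W}_{n}+o_{p}(1)$, which converges to a non-central $\chi_{q}^{2}$ with non-centrality parameter $\Vert\boldsymbol{Q}^{1/2}\boldsymbol{C}\boldsymbol{f}\Vert^{2}=\boldsymbol{f}^{T}\boldsymbol{C}^{T}\boldsymbol{Q}\boldsymbol{C}\boldsymbol{f}=\varrho(\boldsymbol{\theta}_{0})$, establishing (\ref{NC2}). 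The main obstacle I anticipate is precisely the first step: rigorously transferring the expansions (\ref{est2})--(\ref{o2}), which were derived under the fixed null, to the sequence of contiguous alternatives. This needs either Le Cam's third lemma (as invoked before Theorem \ref{Th1B}) or a uniform-in-$\boldsymbol{\theta}$ control of the remainder terms over $n^{-1/2}$-neighbourhoods of $\boldsymbol{\theta}_{0}$, so that the negligible terms remain negligible after the change of measure; the ensuing distributional computation is then routine.
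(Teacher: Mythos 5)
Your proposal is correct and reaches the right non-centrality parameter, but the middle of your argument follows a genuinely different route from the paper's. Both proofs start from the quadratic-form representation (\ref{est2}) and the linearization (\ref{o2}), and both end with a quadratic form in an asymptotically normal vector with non-zero mean. The paper, however, never computes the mean shift of the estimating functions: it uses the exact identity $\boldsymbol{\bar{g}}_{n}(\widehat{\boldsymbol{\theta}}_{E,n})=\boldsymbol{0}_{p}$ (available because $r=p$ in this section) together with a Taylor expansion to rewrite $-\boldsymbol{S}_{12}^{-1}(\boldsymbol{\theta}_{0})\sqrt{n}\boldsymbol{\bar{g}}_{n}(\boldsymbol{\theta}_{0})$ as $\sqrt{n}(\widehat{\boldsymbol{\theta}}_{E,n}-\boldsymbol{\theta}_{0})+o_{p}(\boldsymbol{1}_{p})$, so the statistic becomes a Wald-type quadratic form in the unrestricted estimator; the drift then enters trivially through the decomposition $\sqrt{n}(\widehat{\boldsymbol{\theta}}_{E,n}-\boldsymbol{\theta}_{0})=\sqrt{n}(\widehat{\boldsymbol{\theta}}_{E,n}-\boldsymbol{\theta}_{n})+\boldsymbol{f}$, exactly as in Theorem \ref{Th1B}. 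You instead keep the statistic as a quadratic form in the Lagrange multiplier $\sqrt{n}\widetilde{\boldsymbol{\nu}}_{E,n}$ and inject the drift at the level of the estimating functions, computing the mean shift $\sqrt{n}\,E_{\boldsymbol{\theta}_{n}}[\boldsymbol{g}(\boldsymbol{X},\boldsymbol{\theta}_{0})]\rightarrow-\boldsymbol{S}_{12}(\boldsymbol{\theta}_{0})\boldsymbol{f}$ and propagating it through (\ref{o2}); your matrix identities ($\boldsymbol{V}=\boldsymbol{S}_{12}^{-1}\boldsymbol{S}_{11}\boldsymbol{S}_{12}^{-T}$ when $r=p$, hence $\boldsymbol{Q}\boldsymbol{C}\boldsymbol{V}\boldsymbol{C}^{T}\boldsymbol{Q}=\boldsymbol{Q}$) are all valid, and the non-centrality $\boldsymbol{f}^{T}\boldsymbol{C}^{T}\boldsymbol{Q}\boldsymbol{C}\boldsymbol{f}$ agrees. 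What each buys: the paper's detour through $\widehat{\boldsymbol{\theta}}_{E,n}$ avoids differentiating expectations under the drifting data-generating law (a delicate point in this semiparametric setting, where "sampling at $\boldsymbol{\theta}_{n}$" really means a triangular array of distributions satisfying the moment conditions at $\boldsymbol{\theta}_{n}$), whereas your route yields as a by-product the explicit non-central limit $\mathcal{N}(\boldsymbol{Q}\boldsymbol{C}\boldsymbol{f},\boldsymbol{Q})$ of the multiplier $\sqrt{n}\widetilde{\boldsymbol{\nu}}_{E,n}$, which is reusable for score and Lagrange-multiplier statistics. Both arguments share the same unproved transfer of the null expansions (\ref{est2}) and (\ref{o2}) to the contiguous sequence; you flag this honestly as the main obstacle, while the paper passes over it silently, so on this point your write-up is, if anything, more careful than the original.
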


\begin{proof}
If we substitute (\ref{o2}) into (\ref{est2}), we obtain%
\[
T_{n}^{\phi}(\widehat{\boldsymbol{\theta}}_{E,n},\widetilde{\boldsymbol{\theta
}}_{E,n})=\sqrt{n}\boldsymbol{\bar{g}}_{n}^{T}\left(  \boldsymbol{\theta}%
_{0}\right)  \boldsymbol{S}_{12}^{-1}\left(  \boldsymbol{\theta}_{0}\right)
\boldsymbol{C}(\boldsymbol{\boldsymbol{\theta}_{0}})\boldsymbol{^{T}%
Q}(\boldsymbol{\boldsymbol{\theta}_{0}})\boldsymbol{C}%
(\boldsymbol{\boldsymbol{\theta}_{0}})\boldsymbol{S}_{12}^{-1}\left(
\boldsymbol{\theta}_{0}\right)  \sqrt{n}\boldsymbol{\bar{g}}_{n}%
(\boldsymbol{\theta}_{0})+o_{p}(1).
\]
Since $\boldsymbol{\bar{g}}_{n}(\widehat{\boldsymbol{\theta}}_{E,n}%
)=\boldsymbol{0}_{p}$, the Taylor expansion $\boldsymbol{\bar{g}}%
_{n}(\widehat{\boldsymbol{\theta}}_{E,n})=\boldsymbol{\bar{g}}_{n}%
(\boldsymbol{\theta}_{0})+\frac{\partial}{\partial\boldsymbol{\theta}^{T}%
}\left.  \boldsymbol{\bar{g}}_{n}\left(  \boldsymbol{\theta}\right)
\right\vert _{\boldsymbol{\theta}=\boldsymbol{\theta}_{0}}%
(\widehat{\boldsymbol{\theta}}_{E,n}-\boldsymbol{\theta}_{0})+o_{p}%
(n^{-1/2}\boldsymbol{1}_{r})$ yields
\[
-\boldsymbol{S}_{12}^{-1}\left(  \boldsymbol{\theta}_{0}\right)  \sqrt
{n}\boldsymbol{\bar{g}}_{n}(\boldsymbol{\theta}_{0}%
)=\widehat{\boldsymbol{\theta}}_{E,n}-\boldsymbol{\theta}_{0}+o_{p}%
(\boldsymbol{1}_{r}).
\]
By following the same steps as in the proof of Theorem \ref{Th1B}, we have
that under $H_{1,n}$, $\sqrt{n}(\widehat{\boldsymbol{\theta}}_{E,n}%
-\boldsymbol{\theta}_{0})\overset{\mathcal{L}}{\underset{n\rightarrow
\infty}{\longrightarrow}}\mathcal{N}(\boldsymbol{f},\boldsymbol{V}%
(\boldsymbol{\theta}_{0}))$. Hence, we obtain
\begin{align*}
T_{n}^{\phi}(\widehat{\boldsymbol{\theta}}_{E,n},\widetilde{\boldsymbol{\theta
}}_{E,n})  &  =(\widehat{\boldsymbol{\theta}}_{E,n}-\boldsymbol{\theta}%
_{0})^{T}\boldsymbol{C}(\boldsymbol{\boldsymbol{\theta}_{0}})\boldsymbol{^{T}%
Q}(\boldsymbol{\boldsymbol{\theta}_{0}})\boldsymbol{C}%
(\boldsymbol{\boldsymbol{\theta}_{0}})(\widehat{\boldsymbol{\theta}}%
_{E,n}-\boldsymbol{\theta}_{0})+o_{p}(1)\\
&  =\boldsymbol{Z}^{T}\boldsymbol{Z}+o_{p}(1),
\end{align*}
where $\boldsymbol{Z}=\boldsymbol{Q\left(  \boldsymbol{\theta}_{0}\right)
}^{1/2}\boldsymbol{C\left(  \boldsymbol{\theta}_{0}\right)  }\sqrt
{n}(\widehat{\boldsymbol{\theta}}_{E,n}-\boldsymbol{\theta}_{0}%
)\underset{n\rightarrow\infty}{\overset{\mathcal{L}}{\rightarrow}}%
\mathcal{N}(\boldsymbol{Q\left(  \boldsymbol{\theta}_{0}\right)  }%
^{1/2}\boldsymbol{C\left(  \boldsymbol{\theta}_{0}\right)  f},\boldsymbol{I}%
_{q})$. We thus obtain%
\[
T_{n}^{\phi}(\widehat{\boldsymbol{\theta}}_{E,n},\widetilde{\boldsymbol{\theta
}}_{E,n})\overset{\mathcal{L}}{\underset{n\rightarrow\infty}{\longrightarrow}%
}\chi_{q}^{2}(\boldsymbol{f}^{T}\boldsymbol{C\left(  \boldsymbol{\theta}%
_{0}\right)  }^{T}\boldsymbol{Q\left(  \boldsymbol{\theta}_{0}\right)
C\left(  \boldsymbol{\theta}_{0}\right)  f}),
\]
which completes the proof.
\end{proof}

A second way to consider contiguous alternative hypotheses is to relax the
condition $\boldsymbol{c\left(  \boldsymbol{\theta}\right)  =0}_{q}$ defining
the null hypothesis $\Theta_{0}$. Let $\boldsymbol{f\in}\mathbb{R}^{d}$ be
such that $\boldsymbol{f\neq0}_{p}$. Consider the following sequence
$\boldsymbol{\theta}_{n}$ of parameters approaching $\Theta_{0}$:%
\begin{equation}
H_{1,n}^{\ast}:\boldsymbol{c}(\boldsymbol{\theta}_{n})=n^{-1/2}%
\boldsymbol{\bar{f}}. \label{C2}%
\end{equation}
A Taylor expansion of $\boldsymbol{c}$$\left(  \boldsymbol{\theta}_{n}\right)
$ around $\boldsymbol{\theta}_{0}$ yields
\[
\boldsymbol{c}\left(  \boldsymbol{\theta}_{n}\right)  =\boldsymbol{c}\left(
\boldsymbol{\theta}_{0}\right)  +\boldsymbol{C}(\boldsymbol{\boldsymbol{\theta
}_{0}})\left(  \boldsymbol{\theta}_{n}-\boldsymbol{\theta}_{0}\right)
+o\left(  \left\Vert \boldsymbol{\theta}_{n}-\boldsymbol{\theta}%
_{0}\right\Vert \right)  .
\]
Upon substituting $\boldsymbol{\theta}_{n}=\boldsymbol{\theta}_{0}%
+n^{-1/2}\boldsymbol{f}$ in the previous formula and taking into account that
$\boldsymbol{c}$$\left(  \boldsymbol{\theta}_{0}\right)  =\boldsymbol{0}_{r}$,
we have%
\[
\boldsymbol{c}\left(  \boldsymbol{\theta}_{n}\right)  =n^{-1/2}\boldsymbol{C}%
(\boldsymbol{\boldsymbol{\theta}_{0}})\boldsymbol{f.}%
\]
Then, the equivalence between $H_{1,n}^{\ast}$ and $H_{1,n}$ is obtained for
$\boldsymbol{\bar{f}=C\left(  \boldsymbol{\theta}_{0}\right)  f}$. We thus
have the following result.

\begin{theorem}
Under the contiguous alternative hypothesis in (\ref{C2}), we have
\[
T_{n}^{\phi}(\widetilde{\boldsymbol{\theta}}_{E,n})\underset{n\rightarrow
\infty}{\overset{L}{\rightarrow}}\chi_{q}^{2}(\varrho(\boldsymbol{\theta}%
_{0})),
\]
with $\varrho(\boldsymbol{\theta}_{0})=\boldsymbol{\bar{f}}^{T}%
\boldsymbol{Q\left(  \boldsymbol{\theta}_{0}\right)  \bar{f}}$ and
$\boldsymbol{Q\left(  \boldsymbol{\theta}_{0}\right)  }$ as defined in
(\ref{BM}).
\end{theorem}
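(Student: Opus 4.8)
The plan is to reduce this statement entirely to Theorem \ref{Th2B} by exploiting the equivalence between the two formulations of contiguous alternatives that was established immediately above the theorem. First I would record precisely how $H_{1,n}^{\ast}$ in (\ref{C2}) relates to $H_{1,n}$ in (\ref{C1}). Writing the sequence implicitly defined by $H_{1,n}^{\ast}$ as $\boldsymbol{\theta}_n = \boldsymbol{\theta}_0 + n^{-1/2}\boldsymbol{f} + o(n^{-1/2})$ and applying the Taylor expansion of $\boldsymbol{c}$ around $\boldsymbol{\theta}_0$ displayed above, the requirement $\boldsymbol{c}(\boldsymbol{\theta}_n) = n^{-1/2}\boldsymbol{\bar{f}}$ together with $\boldsymbol{c}(\boldsymbol{\theta}_0)=\boldsymbol{0}_q$ forces $\boldsymbol{C}(\boldsymbol{\theta}_0)\boldsymbol{f} = \boldsymbol{\bar{f}}$. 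Since $\operatorname{rank}(\boldsymbol{C}(\boldsymbol{\theta}_0)) = q$, a solution $\boldsymbol{f}$ exists for every $\boldsymbol{\bar{f}}\in\mathbb{R}^q$, so any alternative of the form $H_{1,n}^{\ast}$ coincides with some alternative of the form $H_{1,n}$ under the identification $\boldsymbol{\bar{f}} = \boldsymbol{C}(\boldsymbol{\theta}_0)\boldsymbol{f}$.

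Next I would invoke Theorem \ref{Th2B} directly. Under $H_{1,n}$ with local direction $\boldsymbol{f}$, that theorem yields
\[
T_n^{\phi}(\widehat{\boldsymbol{\theta}}_{E,n}, \widetilde{\boldsymbol{\theta}}_{E,n}) \overset{\mathcal{L}}{\underset{n\rightarrow\infty}{\longrightarrow}} \chi_q^2\bigl(\boldsymbol{f}^T \boldsymbol{C}(\boldsymbol{\theta}_0)^T \boldsymbol{Q}(\boldsymbol{\theta}_0) \boldsymbol{C}(\boldsymbol{\theta}_0) \boldsymbol{f}\bigr).
\]
Substituting $\boldsymbol{\bar{f}} = \boldsymbol{C}(\boldsymbol{\theta}_0)\boldsymbol{f}$ into the noncentrality parameter collapses it to
\[
\boldsymbol{f}^T \boldsymbol{C}(\boldsymbol{\theta}_0)^T \boldsymbol{Q}(\boldsymbol{\theta}_0) \boldsymbol{C}(\boldsymbol{\theta}_0) \boldsymbol{f} = (\boldsymbol{C}(\boldsymbol{\theta}_0)\boldsymbol{f})^T \boldsymbol{Q}(\boldsymbol{\theta}_0) (\boldsymbol{C}(\boldsymbol{\theta}_0)\boldsymbol{f}) = \boldsymbol{\bar{f}}^T \boldsymbol{Q}(\boldsymbol{\theta}_0) \boldsymbol{\bar{f}},
\]
which is exactly the $\varrho(\boldsymbol{\theta}_0)$ of the claimed statement, with $\boldsymbol{Q}(\boldsymbol{\theta}_0)$ as in (\ref{BM}).

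The one point requiring a little care is whether the limit is genuinely insensitive to the choice of $\boldsymbol{f}$ when $q < p$, in which case $\boldsymbol{C}(\boldsymbol{\theta}_0)\boldsymbol{f} = \boldsymbol{\bar{f}}$ admits a whole affine family of solutions differing by an element of the null space of $\boldsymbol{C}(\boldsymbol{\theta}_0)$. I would dispose of this by noting that the noncentrality parameter, as computed above, depends on $\boldsymbol{f}$ only through the product $\boldsymbol{C}(\boldsymbol{\theta}_0)\boldsymbol{f} = \boldsymbol{\bar{f}}$; hence every admissible $\boldsymbol{f}$ produces the identical $\varrho(\boldsymbol{\theta}_0)$, and the limiting distribution is unambiguous. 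I do not anticipate a real obstacle: once the reparametrization $\boldsymbol{\bar{f}} = \boldsymbol{C}(\boldsymbol{\theta}_0)\boldsymbol{f}$ is in place, the theorem is an immediate corollary of Theorem \ref{Th2B}, the only bookkeeping being the null-space ambiguity just discussed.
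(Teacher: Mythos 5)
Your proposal is correct and follows essentially the same route as the paper: the paper likewise Taylor-expands $\boldsymbol{c}(\boldsymbol{\theta}_{n})$ around $\boldsymbol{\theta}_{0}$ to obtain the identification $\boldsymbol{\bar{f}}=\boldsymbol{C}\left(\boldsymbol{\theta}_{0}\right)\boldsymbol{f}$, thereby reducing $H_{1,n}^{\ast}$ in (\ref{C2}) to $H_{1,n}$ in (\ref{C1}), and then reads the limit $\chi_{q}^{2}\left(\boldsymbol{\bar{f}}^{T}\boldsymbol{Q}\left(\boldsymbol{\theta}_{0}\right)\boldsymbol{\bar{f}}\right)$ directly off Theorem \ref{Th2B}. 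Your extra observation that the noncentrality parameter depends on $\boldsymbol{f}$ only through $\boldsymbol{C}\left(\boldsymbol{\theta}_{0}\right)\boldsymbol{f}$, so the null-space ambiguity when $q<p$ is harmless, is a small point the paper leaves implicit but does not change the argument.
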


\subsection{Simulation results\label{secComp4}}

From a sample $X_{1},...,X_{n}$ of i.i.d. random variables with $E[X_{i}]=\mu$
and $Var[X_{i}]=\sigma^{2},$ we wish to test that the coefficient of variation
is $1$, i.e., $H_{0}:\mu=\sigma$\ against$\ H_{0}:\mu\neq\sigma$. In order to
make a decision, we need to obtain the maximum likelihood estimator under the
restriction $c(\boldsymbol{\vartheta})=\sigma-\mu=0$, with
$\boldsymbol{\vartheta}=(\mu,\sigma)^{T}$. The estimating equations in this
case are $g_{1}(x_{i},\boldsymbol{\vartheta})=x_{i}-\mu$, $g_{2}%
(x_{i},\boldsymbol{\vartheta})=x_{i}^{2}-(\sigma^{2}+\mu^{2})$. If we
establish a bijective transformation between $\boldsymbol{\vartheta}%
=(\mu,\sigma)^{T}$\ and $\boldsymbol{\theta}=(u,v)^{T}$, and are able to
obtain the empirical restricted maximum likelihood estimator of
$\boldsymbol{\theta}$, $\widetilde{\boldsymbol{\theta}}_{E,n}$, due to the
invariance property, we can obtain the empirical restricted maximum likelihood
estimator of $\boldsymbol{\vartheta}$, $\widetilde{\boldsymbol{\vartheta}}%
_{R}$, by taking the inverse of the transformation. Let $\mu=u$ and\ $\sigma
^{2}=v-u^{2}$. Then%
\[
H_{0}:c(\boldsymbol{\theta})=0\quad\text{vs.}\quad H_{0}:c(\boldsymbol{\theta
})\neq0
\]
with $c(\boldsymbol{\theta})=c(u,v)=v-2u^{2}=0$. The estimating equations
under the new parameterization are
\begin{align*}
g_{1}(X_{i},\boldsymbol{\theta})  &  =g_{1}(X_{i},u,v)=X_{i}-u,\\
g_{2}(X_{i},\boldsymbol{\theta})  &  =g_{2}(X_{i},u,v)=X_{i}^{2}-v.
\end{align*}
In general, we have the system of $2p+r=5$ equations as follows:%
\begin{align*}
\xi_{1,j,n}(u,v,t_{1},t_{2},\gamma)  &  =\frac{1}{n}%
{\textstyle\sum\limits_{i=1}^{n}}
\frac{g_{j}(X_{i},u,v)}{1+t_{1}g_{1}(X_{i},u,v)+t_{2}g_{2}(X_{i},u,v)}=0,\quad
j=1,2,\\
\xi_{2,1,n}(u,v,t_{1},t_{2},\gamma)  &  =\frac{1}{n}%
{\textstyle\sum\limits_{i=1}^{n}}
\frac{t_{1}\frac{\partial}{\partial u}g_{1}(X_{i},u,v)+t_{2}\frac{\partial
}{\partial u}g_{2}(X_{i},u,v)}{1+t_{1}g_{1}(X_{i},u,v)+t_{2}g_{2}(X_{i}%
,u,v)}+\frac{\partial}{\partial u}c(u,v)\gamma=0,\\
\xi_{3,1,n}(u,v,t_{1},t_{2},\gamma)  &  =\frac{1}{n}%
{\textstyle\sum\limits_{i=1}^{n}}
\frac{t_{1}\frac{\partial}{\partial v}g_{1}(X_{i},u,v)+t_{2}\frac{\partial
}{\partial v}g_{2}(X_{i},u,v)}{1+t_{1}g_{1}(X_{i},u,v)+t_{2}g_{2}(X_{i}%
,u,v)}+\frac{\partial}{\partial v}c(u,v)\gamma=0,\\
c(u,v)  &  =v-2u^{2}=0,
\end{align*}
which are equivalent to
\begin{align*}
\frac{1}{n}%
{\textstyle\sum\limits_{i=1}^{n}}
\frac{X_{i}-u}{1+t_{1}(X_{i}-u)+t_{2}(X_{i}^{2}-v)}  &  =0,\\
\frac{1}{n}%
{\textstyle\sum\limits_{i=1}^{n}}
\frac{X_{i}^{2}-v}{1+t_{1}(X_{i}-u)+t_{2}(X_{i}^{2}-v)}  &  =0,\\
-\frac{1}{n}%
{\textstyle\sum\limits_{i=1}^{n}}
\frac{t_{1}}{1+t_{1}g_{1}(X_{i},u,v)+t_{2}g_{2}(X_{i},u,v)}-4u\gamma &  =0,\\
-\frac{1}{n}%
{\textstyle\sum\limits_{i=1}^{n}}
\frac{t_{2}}{1+t_{1}g_{1}(X_{i},u,v)+t_{2}g_{2}(X_{i},u,v)}+\gamma &  =0,\\
v-2u^{2}  &  =0,
\end{align*}
for obtaining $\widetilde{\boldsymbol{\theta}}_{E,n}=(\widetilde{u}%
,\widetilde{v})^{T}$. Observe that%
\[
\frac{1}{n}\sum_{i=1}^{n}\frac{1}{1+t_{1}g_{1}(X_{i},u,v)+t_{2}g_{2}%
(X_{i},u,v)}=1,
\]
since the sum of the probabilities is 1, and consequently the third and fourth
equations become $t_{1}=-4u\gamma$, $t_{2}=\gamma$, and we also know that
$v=2u^{2}$. So, the optimization problem is reduced to simply to%
\begin{align*}
f_{1}(u,\gamma)  &  =%
{\textstyle\sum\limits_{i=1}^{n}}
\frac{X_{i}-u}{1+\gamma(X_{i}^{2}-4X_{i}u+2u^{2})}=0,\\
f_{2}(u,\gamma)  &  =%
{\textstyle\sum\limits_{i=1}^{n}}
\frac{X_{i}^{2}-2u^{2}}{1+\gamma(X_{i}^{2}-4X_{i}u+2u^{2})}=0.
\end{align*}
The solution $\widetilde{\boldsymbol{\theta}}_{E,n}$\ must satisfy that the
$n$ probabilities are not less than zero, i.e.,
\begin{align*}
\frac{1}{n(1+t_{1}g_{1}(X_{i},u,v)+t_{2}g_{2}(X_{i},u,v))}  &  =\frac
{1}{n(1+\gamma(X_{i}^{2}-4x_{i}u+2u^{2}))}\geq0,\\
\gamma(X_{i}^{2}-4X_{i}u+2u^{2})  &  \geq\frac{1-n}{n},\quad i=1,...,n,
\end{align*}
and if such a solution exists, it is unique; see Qin and Lawless (1994). In
our empirical study, we solved the system of these two equations by using the
NAG subroutine in Fortran, \texttt{C05PBF}.

To compare the exact coverage probabilities of the confidence intervals for
the coefficient of variation based\ on some empirical phi-divergence test
statistics, a simulation study was conducted separately for continuous and
discrete distributions since we found that the rate of convergence to the
asymptotic distribution\ is much faster for discrete distributions (Poisson)
than for continuous distributions (normal and Student $t$). Based on $100,000$
samples of sizes $30$, $45$, $60$, $75$, $90$, $105$ from $\mathcal{N}(1,1)$
and $1+\sqrt{0.6}t_{5}$ distributions and sample sizes $15$, $20$, $25$, $30$
from Poisson, $\mathcal{P}(1)$, the so-called power-divergence measures (see
Cressie and Read (1984)) were considered to construct the phi-divergence test
statistics,
\begin{align*}
T_{n}^{\lambda}(\widehat{\boldsymbol{\theta}}_{E,n}%
,\widetilde{\boldsymbol{\theta}}_{E,n})  &  =\frac{2}{\lambda(1+\lambda
)}\left(
{\textstyle\sum\limits_{i=1}^{n}}
(1+\widetilde{\gamma}(X_{i}^{2}-4X_{i}\widetilde{u}+2\widetilde{u}%
^{2}))^{\lambda}-n\right)  ,\quad\text{if }\lambda(1+\lambda)\neq0,\\
&  =2%
{\textstyle\sum\limits_{i=1}^{n}}
\log\left(  1+\widetilde{\gamma}(X_{i}^{2}-4X_{i}\widetilde{u}+2\widetilde{u}%
^{2})\right)  ,\quad\text{if }\lambda=0,\\
&  =-2%
{\textstyle\sum\limits_{i=1}^{n}}
\frac{\log\left(  1+\widetilde{\gamma}(X_{i}^{2}-4X_{i}\widetilde{u}%
+2\widetilde{u}^{2})\right)  }{1+\widetilde{\gamma}(X_{i}^{2}-4X_{i}%
\widetilde{u}+2\widetilde{u}^{2})},\quad\text{if }\lambda=-1,
\end{align*}
that is, for each $\lambda\in%
\mathbb{R}
$, we have a different divergence measure by taking $\phi=\phi_{\lambda}$,
where $\phi_{\lambda}(x)$\ is as defined earlier in Section \ref{Simulation}.
In addition, the empirical generalized Wald test statistic, the empirical
generalized score test statistic, and the empirical Lagrange multiplier test
statistic were also obtained. For this purpose, the following auxiliary
matrices are necessary:%
\begin{align*}
\boldsymbol{V}\left(  \boldsymbol{\theta}\right)   &  =\left(  \boldsymbol{S}%
_{12}\left(  \boldsymbol{\theta}\right)  ^{T}\boldsymbol{S}_{11}^{-1}\left(
\boldsymbol{\theta}\right)  \boldsymbol{S}_{12}\left(  \boldsymbol{\theta
}\right)  \right)  ^{-1},\\
\boldsymbol{C}(\boldsymbol{\theta})  &  =\frac{\partial}{\partial
\boldsymbol{\theta}^{T}}\boldsymbol{c}(\boldsymbol{\theta})=%
\begin{pmatrix}
\frac{\partial}{\partial u}(v-2u^{2}), & \frac{\partial}{\partial v}(v-2u^{2})
\end{pmatrix}
=%
\begin{pmatrix}
-4u, & 1
\end{pmatrix}
,\\
\boldsymbol{Q}(\boldsymbol{\theta})  &  =\left(  \boldsymbol{C}%
(\boldsymbol{\theta})\boldsymbol{V}(\boldsymbol{\theta})\boldsymbol{C}%
(\boldsymbol{\theta})^{T}\right)  ^{-1},
\end{align*}
where $\boldsymbol{S}_{11}\left(  \boldsymbol{\theta}\right)  =E\left[
\boldsymbol{g}\left(  \boldsymbol{X},\boldsymbol{\theta}\right)
\boldsymbol{g}\left(  \boldsymbol{X},\boldsymbol{\theta}\right)  ^{T}\right]
$ and $\boldsymbol{S}_{12}\left(  \boldsymbol{\theta}\right)  =E\left[
\boldsymbol{G}_{\boldsymbol{X}}(\boldsymbol{\theta})\right]  $ are unknown.
The matrices $\boldsymbol{S}_{11}\left(  \boldsymbol{\theta}\right)  $\ and
$\boldsymbol{S}_{12}\left(  \boldsymbol{\theta}\right)  $\ can be replaced by
any consistent estimator
\begin{align*}
\widehat{\boldsymbol{S}}_{12}\left(  \boldsymbol{\theta}\right)   &  =\frac
{1}{n}\sum_{i=1}^{n}\boldsymbol{g}(X_{i},\boldsymbol{\theta})\boldsymbol{g}%
(X_{i},\boldsymbol{\theta})^{T}=%
\begin{pmatrix}
\frac{1}{n}\sum\limits_{i=1}^{n}(X_{i}-u)^{2} & \frac{1}{n}\sum\limits_{i=1}%
^{n}(X_{i}-u)(X_{i}^{2}-v)\\
\frac{1}{n}\sum\limits_{i=1}^{n}(X_{i}-u)(X_{i}^{2}-v) & \frac{1}{n}%
\sum\limits_{i=1}^{n}(X_{i}^{2}-v)^{2}%
\end{pmatrix}
,\\
\widehat{\boldsymbol{S}}_{11}\left(  \boldsymbol{\theta}\right)   &
=\frac{\partial}{\partial\boldsymbol{\theta}^{T}}\boldsymbol{\bar{g}}%
_{n}(\boldsymbol{\theta})=-\boldsymbol{I}_{2},
\end{align*}
and also $\boldsymbol{V}\left(  \boldsymbol{\theta}\right)  $ and
$\boldsymbol{Q}(\boldsymbol{\theta})$ by%
\begin{align*}
\widehat{\boldsymbol{V}}(\boldsymbol{\theta})  &  =\widehat{\boldsymbol{S}%
}_{11}^{-1}\left(  \boldsymbol{\theta}\right)  \widehat{\boldsymbol{S}}%
_{12}\left(  \boldsymbol{\theta}\right)  \widehat{\boldsymbol{S}}_{11}%
^{-1}\left(  \boldsymbol{\theta}\right)  =\widehat{\boldsymbol{S}}_{12}\left(
\boldsymbol{\theta}\right)  ,\\
\widehat{\boldsymbol{Q}}(\boldsymbol{\theta})  &  =\left(  \boldsymbol{C}%
(\boldsymbol{\theta})\widehat{\boldsymbol{V}}(\boldsymbol{\theta
})\boldsymbol{C}(\boldsymbol{\theta})^{T}\right)  ^{-1}\\
&  =\frac{n}{\sum_{i=1}^{n}\left(  X_{i}^{4}-2vX_{i}^{2}+v^{2}\right)
-8u\sum_{i=1}^{n}\left(  -vX_{i}+X_{i}^{3}-uX_{i}^{2}+uv\right)  +16u^{2}%
\sum_{i=1}^{n}\left(  -2uX_{i}+X_{i}^{2}+u^{2}\right)  }.
\end{align*}
The unconstrained maximum likelihood estimators were calculated as the
solution of the system of $p=2$ equations:%
\begin{align*}
\frac{1}{n}\sum_{i=1}^{n}g_{1}(X_{i},u,v)  &  =\frac{1}{n}\sum_{i=1}^{n}%
(X_{i}-u)=0,\\
\frac{1}{n}\sum_{i=1}^{n}g_{2}(X_{i},u,v)  &  =\frac{1}{n}\sum_{i=1}^{n}%
(X_{i}^{2}-v)=0,
\end{align*}
that is,
\[
\widehat{u}=\frac{1}{n}\sum_{i=1}^{n}X_{i},\qquad\widehat{v}=\frac{1}{n}%
\sum_{i=1}^{n}X_{i}^{2}.
\]
First, the expression of the empirical generalized Wald test statistic is
obtained as
\[
W_{n}(\widehat{\boldsymbol{\theta}}_{E,n})=nc(\widehat{\boldsymbol{\theta}%
}_{E,n})\widehat{\boldsymbol{Q}}(\widehat{\boldsymbol{\theta}}_{E,n}%
)c(\widehat{\boldsymbol{\theta}}_{E,n})=\frac{n\left(  2\widehat{u}%
^{2}-\widehat{v}\right)  ^{2}}{\frac{1}{n}\sum_{i=1}^{n}X_{i}^{4}%
-8\widehat{u}\frac{1}{n}\sum_{i=1}^{n}X_{i}^{3}-\widehat{v}^{2}+24\widehat{u}%
^{2}\widehat{v}-16\widehat{u}^{4}};
\]
next, the empirical generalized score test statistic is obtained as
\begin{align*}
S_{n}(\widetilde{\boldsymbol{\theta}}_{E,n})  &  =n\boldsymbol{\bar{g}}%
_{n}(\widetilde{\boldsymbol{\theta}}_{E,n})^{T}\widehat{\boldsymbol{V}%
}(\widetilde{\boldsymbol{\theta}}_{E,n})\boldsymbol{C}%
(\widetilde{\boldsymbol{\theta}}_{E,n})^{T}\widehat{\boldsymbol{Q}%
}(\widetilde{\boldsymbol{\theta}}_{E,n})\boldsymbol{C}%
(\widetilde{\boldsymbol{\theta}}_{E,n})\widehat{\boldsymbol{V}}%
(\widetilde{\boldsymbol{\theta}}_{E,n})\boldsymbol{\bar{g}}_{n}%
(\widetilde{\boldsymbol{\theta}}_{E,n})\\
&  =\frac{n\left[  4\widetilde{u}(\widetilde{u}-\widehat{u})+(\widehat{v}%
-\widetilde{v})\right]  ^{2}}{\frac{1}{n}\sum_{i=1}^{n}X_{i}^{4}%
-8\widetilde{u}\frac{1}{n}\sum_{i=1}^{n}X_{i}^{3}-2\widehat{v}\widetilde{v}%
+\widetilde{v}^{2}+8\widehat{u}\widetilde{u}\widetilde{v}+24\widetilde{u}%
^{2}\widehat{v}-8\widetilde{u}^{2}\widetilde{v}-32\widehat{u}\widetilde{u}%
^{3}+16\widetilde{u}^{4}}\\
&  =\frac{n\left[  \widehat{v}+\widetilde{v}-4\widetilde{u}\widehat{u}\right]
^{2}}{\frac{1}{n}\sum_{i=1}^{n}X_{i}^{4}-8\widetilde{u}\frac{1}{n}\sum
_{i=1}^{n}X_{i}^{3}-8\widehat{u}\widetilde{u}\widetilde{v}+10\widehat{v}%
\widetilde{v}+\widetilde{v}^{2}},
\end{align*}
where the second equality is obtained by taking into account that
$\widetilde{v}=2\widetilde{u}^{2}$; finally, the empirical Lagrange multiplier
test statistic is obtained as
\[
\lambda_{n}(\widetilde{\boldsymbol{\theta}}_{E,n})=n\widetilde{\gamma
}\widehat{\boldsymbol{Q}}^{-1}(\widetilde{\boldsymbol{\theta}}_{E,n}%
)\widetilde{\gamma}=n\widetilde{\gamma}^{2}\left(  \frac{1}{n}\sum_{i=1}%
^{n}X_{i}^{4}-8\widetilde{u}\frac{1}{n}\sum_{i=1}^{n}X_{i}^{3}-8\widehat{u}%
\widetilde{u}\widetilde{v}+10\widehat{v}\widetilde{v}+\widetilde{v}%
^{2}\right)  .
\]
%

\begin{table}[htbp] \scriptsize\centering
\begin{tabular}
[c]{ccccccccccc}\hline
$n$ & Nom. level & $T_{n}^{-1}$ & $T_{n}^{-1/2}$ & $T_{n}^{0}$ & $T_{n}^{2/3}$
& $T_{n}^{1}$ & $T_{n}^{2}$ & $W_{n}$ & $S_{n}$ & $\lambda_{n}$\\\hline
30 & 0.90 & 0.8537 & 0.8615 & 0.8672 & 0.8717 & \textbf{0.8723} & 0.8683 &
0.8676 & 0.8671 & 0.8689\\
45 & 0.90 & 0.8682 & 0.8745 & 0.8789 & 0.8828 & 0.8836 & 0.8811 & 0.8778 &
0.8780 & \textbf{0.8842}\\
60 & 0.90 & 0.8775 & 0.8824 & 0.8860 & 0.8895 & 0.8908 & 0.8899 & 0.8852 &
0.8854 & \textbf{0.8930}\\
75 & 0.90 & 0.8815 & 0.8856 & 0.8892 & 0.8923 & 0.8932 & 0.8922 & 0.8870 &
0.8880 & \textbf{0.8962}\\
90 & 0.90 & 0.8851 & 0.8892 & 0.8919 & 0.8947 & 0.8955 & 0.8951 & 0.8900 &
0.8900 & \textbf{0.8996}\\
105 & 0.90 & 0.8870 & 0.8905 & 0.8938 & 0.8965 & 0.8976 & 0.8976 & 0.8916 &
0.8917 & \textbf{0.9009}\\\hline
30 & 0.95 & 0.9062 & 0.9143 & 0.9210 & 0.9251 & \textbf{0.9258} & 0.9211 &
0.9228 & 0.9169 & 0.9149\\
45 & 0.95 & 0.9199 & 0.9265 & 0.9312 & 0.9352 & \textbf{0.9358} & 0.9321 &
0.9308 & 0.9275 & 0.9285\\
60 & 0.95 & 0.9286 & 0.9342 & 0.9392 & 0.9425 & \textbf{0.9426} & 0.9401 &
0.9374 & 0.9336 & 0.9360\\
75 & 0.95 & 0.9322 & 0.9379 & 0.9417 & \textbf{0.9446} & \textbf{0.9446} &
0.9427 & 0.9399 & 0.9367 & 0.9398\\
90 & 0.95 & 0.9350 & 0.9398 & 0.9432 & 0.9457 & \textbf{0.9460} & 0.9443 &
0.9415 & 0.9387 & 0.9419\\
105 & 0.95 & 0.9372 & 0.9417 & 0.9449 & 0.9477 & \textbf{0.9480} & 0.9464 &
0.9428 & 0.9403 & 0.9447\\\hline
\end{tabular}
$\ \ $%
\caption{Simulated values of interval coverage probabilities with underlying $\mathcal{N}(1,1)$
distribution.\label{t2.1}}%
\end{table}%
%

\begin{table}[htbp] \scriptsize\centering
\begin{tabular}
[c]{ccccccccccc}\hline
$n$ & Nom. level & $T_{n}^{-1}$ & $T_{n}^{-1/2}$ & $T_{n}^{0}$ & $T_{n}^{2/3}$
& $T_{n}^{1}$ & $T_{n}^{2}$ & $W_{n}$ & $S_{n}$ & $\lambda_{n}$\\\hline
30 & 0.90 & 0.8003 & 0.8086 & 0.8145 & 0.8175 & 0.8174 & 0.8130 &
\textbf{0.8302} & 0.8148 & 0.8208\\
45 & 0.90 & 0.8232 & 0.8306 & 0.8356 & 0.8387 & 0.8384 & 0.8343 &
\textbf{0.8465} & 0.8333 & 0.8417\\
60 & 0.90 & 0.8354 & 0.8424 & 0.8473 & 0.8489 & 0.8488 & 0.8446 &
\textbf{0.8561} & 0.8446 & 0.8526\\
75 & 0.90 & 0.8440 & 0.8500 & 0.8541 & 0.8568 & 0.8573 & 0.8535 &
\textbf{0.8621} & 0.8531 & 0.8597\\
90 & 0.90 & 0.8517 & 0.8571 & 0.8610 & 0.8631 & 0.8629 & 0.8586 &
\textbf{0.8683} & 0.8603 & 0.8645\\
105 & 0.90 & 0.8577 & 0.8627 & 0.8657 & 0.8668 & 0.8665 & 0.8621 &
\textbf{0.8737} & 0.8654 & 0.8665\\\hline
30 & 0.95 & 0.8584 & 0.8696 & 0.8776 & 0.8825 & 0.8822 & 0.8740 &
\textbf{0.8897} & 0.8665 & 0.8709\\
45 & 0.95 & 0.8807 & 0.8906 & 0.8976 & 0.9012 & 0.9008 & 0.8938 &
\textbf{0.9037} & 0.8847 & 0.8923\\
60 & 0.95 & 0.8925 & 0.9007 & 0.9074 & 0.9103 & 0.9098 & 0.9026 &
\textbf{0.9107} & 0.8958 & 0.9004\\
75 & 0.95 & 0.9013 & 0.9088 & 0.9140 & 0.9165 & 0.9159 & 0.9087 &
\textbf{0.9169} & 0.9040 & 0.9072\\
90 & 0.95 & 0.9080 & 0.9150 & 0.9199 & 0.9218 & 0.9209 & 0.9146 &
\textbf{0.9220} & 0.9108 & 0.9112\\
105 & 0.95 & 0.9135 & 0.9201 & 0.9234 & 0.9244 & 0.9235 & 0.9160 &
\textbf{0.9265} & 0.9163 & 0.9125\\\hline
\end{tabular}
$\ \ $%
\caption{Simulated values of interval coverage probabilities with underlying $1+\sqrt{0.6}t_{5}$
distribution.\label{t2.2}}%
\end{table}%
%

\begin{table}[htbp] \scriptsize\centering
\begin{tabular}
[c]{ccccccccccc}\hline
$n$ & Nom. level & $T_{n}^{-1}$ & $T_{n}^{-1/2}$ & $T_{n}^{0}$ & $T_{n}^{2/3}$
& $T_{n}^{1}$ & $T_{n}^{2}$ & $W_{n}$ & $S_{n}$ & $\lambda_{n}$\\\hline
15 & 0.90 & 0.8856 & 0.8878 & 0.9010 & 0.9089 & 0.9089 & 0.9071 & 0.8962 &
0.9075 & \textbf{0.9099}\\
20 & 0.90 & 0.8814 & 0.8849 & 0.8875 & 0.8904 & 0.8925 & 0.8923 &
\textbf{0.9074} & 0.8962 & 0.9013\\
25 & 0.90 & 0.8758 & 0.8793 & 0.8874 & 0.8934 & 0.8930 & 0.8923 &
\textbf{0.9048} & 0.8882 & 0.8974\\
30 & 0.90 & 0.8790 & 0.8848 & 0.8861 & 0.8916 & 0.8923 & 0.8901 &
\textbf{0.9027} & 0.8933 & 0.8928\\\hline
15 & 0.95 & 0.9329 & 0.9392 & 0.9529 & 0.9606 & 0.9613 & 0.9639 & 0.9546 &
0.9440 & \textbf{0.9670}\\
20 & 0.95 & 0.9359 & 0.9452 & 0.9486 & 0.9522 & 0.9539 & \textbf{0.9544} &
0.9501 & 0.9505 & 0.9519\\
25 & 0.95 & 0.9336 & 0.9388 & 0.9409 & 0.9434 & 0.9438 & 0.9442 &
\textbf{0.9502} & 0.9468 & 0.9472\\
30 & 0.95 & 0.9318 & 0.9336 & 0.9406 & 0.9442 & 0.9449 & 0.9446 &
\textbf{0.9519} & 0.9408 & 0.9466\\\hline
\end{tabular}
$\ \ $%
\caption{Simulated values  of interval coverage probabilities with underlying Poisson $\mathcal{P}(1)$
distribution.\label{t2.3}}%
\end{table}%

Focusing on the coefficients of variation of the two continuous distributions,
the exact coverage probabilities of the confidence intervals based on the
empirical power divergence test statistics with $\lambda\in\{-1,\frac{1}%
{2},0,\frac{2}{3},1,2\}$ are presented in Tables \ref{t2.1} and \ref{t2.2}
when the nominal coverage probability based on the asymptotic distribution is
either $90\%$ or $95\%$. In addition, the empirical generalized Wald test
statistic, the empirical generalized score test statistic and the empirical
Lagrange multiplier test statistic are also considered for comparative
purposes. From these results, we note that the empirical likelihood ratio test
is not satisfactory and that among the empirical power divergence test
statistics there is a good choice, for the underlying normal distribution, in
the empirical chi-squared test statistic ($\lambda=1$), and for the non-normal
underlying distribution in the empirical Cressie-Read test statistic
($\lambda=\frac{2}{3}$), but there is not much difference between their
performance. If we consider other test statistics, for the normally
distributed observations, with theoretical asymptotic coverage as $95\%$, the
empirical chi-squared test statistic ($\lambda=1$) while for $90\%$ coverage
probability, the empirical Lagrange multiplier test statistic are seen to be
the best ones but there is very little difference with respect to the
empirical chi-squared test statistic ($\lambda=1$). For the non-normal
distribution, the empirical Wald test statistic is slightly superior than the
empirical Cressie-Read test statistic,\ while the opposite seems to be the
case for the normal distribution. Since in practice we do not know the form of
the underlying distribution, based on this simulation study, we would
recommend the use of either the empirical Cressie-Read test statistic\ or the
empirical Wald test statistic. In Table \ref{t2.3},\ the same study has been
carried out but for the case of Poisson distribution. The empirical likelihood
ratio test is once again unsatisfactory and that the empirical Cressie-Read
test statistic and the empirical chi-squared test statistic have a better
coverage probability close to the nominal level. Further, the empirical Wald
test statistic seems to be slightly superior than the empirical Cressie Read
test statistic since it has a greater coverage probability in 5 of the 8 cases.

\section{Concluding remarks\label{LastSec}}

In a non-parametric setting, we have proposed here a broad family of empirical
test statistics based on $\phi$-divergence measures, first for a simple null
hypothesis and then for a composite null hypothesis. Through numerical
examples and simulations, it has been shown that confidence intervals
constructed thought the empirical $\phi$-divergence based statistics improve
the coverage probability of the empirical likelihood ratio test slightly.
However, the most promising advantage of this new family of test statistics is
that some members outperform the empirical likelihood ratio test in the
presence of some shifted observations in the data. The approximation of the
power function based on a specific sample provides an insight about the most
appropriate robust $\phi$-divergence statistic. These robust statistics tend
to yield narrower confidence intervals in comparison to the empirical
likelihood ratio test.

The development of these empirical $\phi$-divergence tests in the two-sample
and multi-sample situations will be of great interest. We are currently
working in this direction and hope to report these findings in a future
paper.\newpage

\end{document}